\documentclass[a4paper,reqno]{amsart}

\usepackage{amsmath}
\usepackage{paralist}
\usepackage{graphics}
\usepackage{epsfig}
\usepackage{amsfonts}
\usepackage{amssymb}
\usepackage{hyperref}
\usepackage{epstopdf}
\usepackage{color}

\textheight=8.2 true in
\textwidth=5.0 true in
\topmargin 30pt
\setcounter{page}{1}

\newtheorem{theorem}{Theorem}[section]
\newtheorem{lemma}[theorem]{Lemma}

\def\ifl{\iffalse }

\def\bc{\begin{center}}       \def\ec{\end{center}}
\def\ba{\begin{array}}        \def\ea{\end{array}}
\def\be{\begin{equation}}     \def\ee{\end{equation}}
\def\bea{\begin{eqnarray}}    \def\eea{\end{eqnarray}}
\def\beaa{\begin{eqnarray*}}  \def\eeaa{\end{eqnarray*}}

\numberwithin{equation}{section}

\newtheorem{definition}[theorem]{Definition}

\numberwithin{equation}{section}

\begin{document}

\title[Two-species chemotaxis model with signal absorption]
{Global solvability  and asymptotical behavior in a two-species chemotaxis model with signal absorption}

\author{Guoqiang Ren}
\address{School of Mathematics and Statistics, Huazhong University of Science and Technology, Wuhan, 430074, Hubei, P. R. China;  Hubei Key Laboratory of Engineering Modeling and Scientific Computing,
       Huazhong University of Science and Technology,
       Wuhan, 430074, Hubei, P. R. China}
\email{597746385@qq.com}

\author{Tian Xiang$^*$}
\address{Institute for Mathematical Sciences, Renmin University of China, Bejing, 100872, China}
\email{txiang@ruc.edu.cn}
\thanks{$^*$ Corresponding author.}

\subjclass[2000]{Primary:  35K59, 35B65, 35B40, 35A09, 35K51; Secondary: 35A01, 92D25.}


\keywords{Two-species chemotaxis model, signal absorption,  global existence, boundedness,  asymptotics.}

\begin{abstract}

 In this work, we study global existence, eventual smoothness and asymptotical behavior   of positive solutions for the following two-species chemotaxis consumption model:
$$\left\{ \begin{array}{lll}
&u_t = \Delta u-\chi_1\nabla \cdot ( u\nabla w),  &\quad x\in \Omega, t>0, \\[0.2cm]
& v_t =\Delta v-\chi_2\nabla \cdot (u\nabla w),  &\quad x\in \Omega, t>0,  \\[0.2cm]
& w_t =\Delta w -(\alpha u+\beta v)w, &\quad x\in \Omega, t>0,
  \end{array}\right.
$$
 in a  bounded smooth but not necessarily  \it{convex} domain  $\Omega\subset \mathbb{R}^n (n=2,3,4,5)$ with nonnegative initial data $u_0, v_0, w_0$ and homogeneous Neumann boundary data. Here, the parameters $\chi_1,\chi_2$ are positive  and $\alpha,\beta$ are nonnegative.

Under a smallness condition $\max\{\chi_1,\chi_2\}\|w_0\|_{L^\infty}<\pi\sqrt{2/n}$, boundedness of classical solutions and stabilization to constant equilibrium have been shown in \cite{ZT19}. Here, without any smallness condition, we show  global existence and uniform-in-time boundedness of classical solutions  in  2D  and   global existence, eventual smoothness  and asymptotical behavior (in convex domains)  of weak solutions in nD (n=3,4,5). Our findings also extend and improve the one-species chemotaxis-consumption model studied in  \cite{Ren-Liu-4, TW12-JDE}.
 \end{abstract}

\maketitle

\section{Introduction and sketch of the main results}

In this project, we investigate the following Neumann initial-boundary value problem for a
two-species chemotaxis system with consumption of chemoattractant:
\be \label{PPT}\begin{cases}
u_t = \Delta u-\chi_1 \nabla \cdot (u\nabla w),  &x\in \Omega, t>0, \\[0.2cm]
v_t = \Delta v-\chi_2 \nabla \cdot (v\nabla w), & x\in \Omega, t>0, \\[0.2cm]
 w_t = \Delta w -(\alpha u+\beta v)w,  & x\in \Omega, t>0, \\[0.2cm]
\frac{\partial u}{\partial \nu}=\frac{\partial v}{\partial \nu}=\frac{\partial w}{\partial \nu}=0, & x\in \partial \Omega, t>0,\\[0.2cm]
u(x,0)=u_0(x) \ , v(x,0)=v_0(x) \ ,  w(x,0)=w_0(x) \ , & x\in \Omega.  \end{cases}  \ee
Hereafter,   $\Omega \subset \mathbb{R}^n (n\geq 1)$ is a bounded domain with a smooth boundary $\partial \Omega$ and $\frac{\partial}{\partial\nu}$ denotes the outer normal derivative; the unknown variables $u=u(x,t)$ and $v=v(x,t)$  denote  the population densities of two species and $w$ represents the concentration of the chemoattractant,  $\chi_1, \chi_2, \alpha$ and $\beta$ are positive constants and the given initial data  are conveniently assumed throughout this paper to satisfy, for some $r>\max\{2,n\}$, that
\be \label{initial-data-reg}
(u_0,v_0,w_0)\in C^0(\overline{\Omega})\times C^0(\overline{\Omega})\times W^{1,r}(\Omega), \ \  u_0, v_0, w_0\geq, \not\equiv0.  \ee
The model \eqref{PPT} is used  in mathematical biology to account  the biased movement of two populations in respond to the concentration gradient of one common chemical signal. It is an obvious extension of the well-known  Keller-Segel second model:
 \be \label{KS}\begin{cases}
u_t = \Delta u-\chi \nabla \cdot (u\nabla w),  &x\in \Omega, t>0, \\[0.2cm]
 w_t =  \Delta w-uw,  & x\in \Omega, t>0,   \end{cases}  \ee
which and its variants have been studied mathematically in various contexts. Speaking of classical solutions, if
 \be\label{small-w0}
 \text{either} \  \ \   n\leq 2 \ \ \ \text{or} \ \ \   \chi\|w_0\|_{L^\infty(\Omega)}\leq \frac{1}{6(n+1)},
 \ee
 then global  boundedness of  the solution $(u,w)$  to \eqref{KS} is ensured   and further  any global bounded such solution  converges uniformly according to
\be\label{uw-cov}
 \lim_{t\rightarrow \infty} \left(\left\|u(\cdot, t)-\bar{u}_0\right\|_{L^\infty(\Omega)}+\left\|w(\cdot, t)\right\|_{L^\infty(\Omega)}\right)=0, \ \ \ \bar{u}_0=\frac{1}{|\Omega|}\int_\Omega u_0,
 \ee
cf.  \cite{Tao11, Xiang18-NA, ZL17-JMP}. Without smallness condition like \eqref{small-w0}, the problem possesses at least one global (certain) weak solution which is eventually smooth and enjoys the convergence property \eqref{uw-cov} in 3D bounded convex domains \cite{TW12-JDE}. As a simple starting motivation, we are wondering  whether such type weak solution continues to exist in $4$D  or higher and, if so, whether it also enjoys \eqref{uw-cov}. So far,  it is still widely open whether \eqref{KS} possesses blow-ups in higher dimensions,  only certain blow-up properties of  the local classical solutions to \eqref{KS} are recently known \cite{JWZ18-JDE}. For studies on chemotaxis-consumption systems with different boundary conditions, we refer the interested reader to the very recent works \cite{FLM20,LWY20}. For properties of solutions in chemotaxis-consumption type models in more complex framework, for instance, with tensor-valued sensitivity,  singular sensitivity,  logistic source, predator-prey interaction or  fluid interaction etc, we refer the interested reader to
\cite{BK17-M3AS, BBTW15, JW16, LW17-DCDS, Win15-SIAM, Win16-M3AS, Win17-JDE} and the references therein.

It is well-known that logistic type source has an effective role in enhancing global existence, and boundedness in chemotaxis-involving systems. Indeed, a lot of studies have been done to the IBVP \eqref{PPT} with Lotka-Volterra type competitive kinetics (the same boundary and initial conditions are suspended):
\be \label{Log-PPT}\begin{cases}
u_t = \Delta u-\chi_1 \nabla \cdot (u\nabla w)+\mu_1u\left(1-u-a_1v\right),  &x\in \Omega, t>0, \\[0.2cm]
v_t = \Delta v-\chi_2 \nabla \cdot (v\nabla w)+\mu_2v\left(1-v-a_2u\right), & x\in \Omega, t>0, \\[0.2cm]
 w_t = \Delta w -(\alpha u+\beta v)w,  & x\in \Omega, t>0.  \end{cases}  \ee
Global  boundedness of classical solutions to \eqref{Log-PPT} are guaranteed under
 $$
 \text{either} \  \ \   n\leq 2 \ \ \ \text{or} \  \  \   \max\left\{\chi_1, \  \chi_2\right\}\|w_0\|_{L^\infty(\Omega)}<\frac{\pi}{\sqrt{n+1}}.
$$
Such bounded solutions are known (\cite{HKMT-17, JX19,  Ren-Liu-2, Ren-Liu-3, WMHZ-18}) to stabilize according to
\be\label{conv-log}
\left(u(\cdot, t), v(\cdot, t), w(\cdot, t)\right)\overset{\text{ in } L^\infty(\Omega)}\longrightarrow \begin{cases}  \left(\frac{1-a_1}{1-a_1a_2},  \frac{1-a_2}{1-a_1a_2}, 0\right),  &\text{if } a_1, a_2\in(0,1), \\[0.2cm]
\left(0,1, 0\right), &\text{if } a_1\geq 1> a_2>0, \\[0.2cm]
\left(1, 0,  0\right), &\text{if } 0<a_1< 1\leq a_2.
\end{cases}
\ee
Recently, global existence of generalized weak solutions and their long time  behaviors (similar to \eqref{conv-log}) to \eqref{Log-PPT} in nD are shown in \cite{Ren-Liu-4} under
$$
   \max\left\{\chi_1, \  \chi_2\right\}\|w_0\|_{L^\infty(\Omega)}<\frac{1}{2}.
$$
Indeed,  fluid  interaction has been incorporated in \eqref{Log-PPT}, cf.  \cite{HKMT-17, JX19}. We  also mention that single or multiple-species signal-production type chemotaxis systems with/without fluid interaction have been widely investigated  also e.g. in
\cite{BW16,CKM, LW19-JMAA, LX20, LR20-JKMS, TW12-non,  TMQ20, Win13, TX18-JMAA} and the references therein.

Now,  to formulate our main motivation of this project, we observe, without any damping source, global existence and boundedness of classical solutions to the IBVP \eqref{PPT} in $n$D  are obtained in \cite{ZT19}  under
\be\label{ZTcond}
\max\left\{\chi_1, \  \chi_2\right\}\|w_0\|_{L^\infty(\Omega)}<\pi\sqrt{\frac{2}{n}}.
\ee
Under this smallness condition,  stabilization of  solutions is also naturally derived:
\be\label{uvw-cov-zt}
 \lim_{t\rightarrow \infty} \left(\left\|u(\cdot, t)-\bar{u}_0\right\|_{L^\infty(\Omega)}+\left\|v(\cdot, t)-\bar{v}_0\right\|_{L^\infty(\Omega)}+\left\|w(\cdot, t)\right\|_{L^\infty(\Omega)}\right)=0.
 \ee
Comparing  these existing results, we find, even in the presence of Lotka-Volterra type competitive kinetics, certain smallness condition on initial data still needs to be  imposed to have global existence, boundedness and convergence. A natural question arises: whether and how far can we solve \eqref{PPT} globally without any damping source? More specifically, inspired from \eqref{small-w0} and \eqref{ZTcond}, we are wondering, first,
\begin{itemize}
\item[(Q1)]
without  the smallness condition in  \eqref{ZTcond} with $n=2$, can we  still have $2$D  global existence and boundedness of classical solutions to the IBVP \eqref{PPT}?
\end{itemize}
Second, based on the existing knowledge about the one-species chemotaxis consumption model \eqref{KS}, cf. \cite{Ren-Liu-4, TW12-JDE, Win17-JDE} and the references therein, we are wondering
\begin{itemize}
\item[(Q2)]without the smallness condition in  \eqref{ZTcond}, how far  can we solve the two-species chemotaxis-consumption model \eqref{PPT}  globally in a weak solution sense in $\geq3$D and, if so, how do such weak solutions behave after certain perhaps long waiting  time?
\end{itemize}
In this work, we shall answer (Q1) and (Q2) in a positive way for the IBVP  \eqref{PPT}: we show, without any smallness condition, global existence and uniform-in-time boundedness of classical solutions  in  2D  and   global existence, eventual smoothness and asymptotical behavior (in convex domains)  of weak solutions   in $n$D ($n=3,4,5$).
\begin{theorem}[\textbf{Global dynamics  for \eqref{PPT}}]\label{global-ext-3d-4d-5d}  Let $\chi_1,\chi_2, \alpha,\beta>0$ and $\Omega\subset\mathbb{R}^n$ $(n\leq 5)$ be a bounded and smooth domain, and let  $u_0,v_0$ and $w_0$ fulfill  \eqref{initial-data-reg}.
\begin{itemize}
\item[(B1)] [\textbf{Global boundedness and convergence in $2$D}] When $n=2$, the IBVP \eqref{PPT} has a unique   global classical solution which is bounded on $\Omega\times(0,\infty)$ in the sense there exists $C>0$ such that
    \be\label{linfty-bdd}
    \|u(\cdot, t)\|_{L^\infty(\Omega)}+\|v(\cdot, t)\|_{L^\infty(\Omega)}+\|w(\cdot, t)\|_{W^{1,\infty}(\Omega)}\leq C, \ \  \ \forall t>0.
    \ee
   Naturally, such bounded solution converges according to \eqref{uvw-cov-zt}.
\item[(B2)] [\textbf{Global existence of weak solutions in  $3,4,5$D}] When $n=3,4,5$,  there exists at least one triple $(u,v,w)$ of nonnegative functions satisfying
\be \label{data-reg}\begin{cases}
u\in L^{\frac{n+2}{n}}_{loc}(\overline{\Omega}\times[0,\infty))\cap L^{\frac{n+2}{n+1}}_{loc}([0,\infty);W^{1,\frac{n+2}{n+1}}(\Omega)),\\[0.2cm]
v\in L^{\frac{n+2}{n}}_{loc}(\overline{\Omega}\times[0,\infty))\cap L^{\frac{n+2}{n+1}}_{loc}([0,\infty);W^{1,\frac{n+2}{n+1}}(\Omega))\mbox{ and }\\[0.2cm]
w\in L^4_{loc}([0,\infty);W^{1,4}(\Omega)),
\end{cases}  \ee
which are a global weak solution of \eqref{PPT} in the sense of Definition \ref{weak-uvw} below.
\item[(B3)][\textbf{Eventual smoothness and convergence in  convex domains}] When $\Omega\subset \mathbb{R}^n (n=3,4,5)$ is a smooth,  bounded and convex domain, there exists $T^{\ast}>0$ such that the global weak  solution obtained in (B2) is bounded,  belongs to $C^{2,1}(\overline{\Omega}\times[T^{\ast},\infty))$ and converges according to  \eqref{uvw-cov-zt}.
\end{itemize}
\end{theorem}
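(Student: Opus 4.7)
Overall, I would follow and extend the Tao--Winkler scheme for the single-species consumption model \cite{TW12-JDE}. First, standard Banach fixed-point arguments in $C^0(\overline\Omega)\times C^0(\overline\Omega)\times W^{1,r}(\Omega)$ yield a unique maximal classical solution $(u,v,w)$ on $[0,T_{\max})$ with the usual extensibility criterion. Immediate consequences are mass conservation $\|u(\cdot,t)\|_{L^1}\equiv\|u_0\|_{L^1}$, $\|v(\cdot,t)\|_{L^1}\equiv\|v_0\|_{L^1}$, and the parabolic maximum-principle bound $0\le w(\cdot,t)\le\|w_0\|_{L^\infty}$.

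For part (B1), the plan is to study the coupled entropy-energy functional
$$E(t):=\int_\Omega u\log u+\int_\Omega v\log v+K\int_\Omega \frac{|\nabla w|^2}{w}$$
for a sufficiently large constant $K>0$. Differentiating in time, the first two terms yield the Fisher-type dissipations $\int|\nabla u|^2/u$ and $\int|\nabla v|^2/v$ together with the crossed terms $\chi_1\int\nabla u\cdot\nabla w$ and $\chi_2\int\nabla v\cdot\nabla w$, while the time derivative of $\int|\nabla w|^2/w$ supplies the coercive Tao--Winkler dissipation $\int w|D^2\log w|^2$. The boundary term $\int_{\partial\Omega}\partial_\nu(|\nabla w|^2/w)$, which classically requires convexity of $\Omega$, can in 2D be absorbed through the Sobolev trace inequality. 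This yields uniform-in-$t$ control of $E(t)$, hence $u\log u,\,v\log v\in L^\infty_t(L^1_x)$ and $\nabla w\in L^\infty_t(L^2_x)$; a standard Moser iteration on the $u$- and $v$-equations then upgrades these to the $L^\infty$ bound \eqref{linfty-bdd}. The convergence \eqref{uvw-cov-zt} follows from $\int_0^\infty\int_\Omega(uw+vw)\,dx\,dt<\infty$ combined with heat-semigroup smoothing on the $w$-equation.

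For (B2), I would introduce a regularized family of problems---for instance, replacing $\nabla w$ in the taxis terms by $\nabla w/(1+\varepsilon|\nabla w|)$---producing global classical solutions $(u_\varepsilon,v_\varepsilon,w_\varepsilon)$ for each $\varepsilon>0$. Uniform estimates matching \eqref{data-reg}, namely $u_\varepsilon,v_\varepsilon\in L^{(n+2)/n}_{t,x}$, $\nabla u_\varepsilon,\nabla v_\varepsilon\in L^{(n+2)/(n+1)}_{t,x}$, and $\nabla w_\varepsilon\in L^4_{t,x}$, arise by testing the first two equations with $(u_\varepsilon+1)^s$ for small $s>0$, combining parabolic Gagliardo--Nirenberg with the time-integrated entropy dissipation produced above, and applying Neumann heat-semigroup smoothing to $w_\varepsilon$. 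An Aubin--Lions compactness argument then extracts a subsequential limit satisfying the distributional formulation in Definition \ref{weak-uvw}. For (B3), convexity kills the problematic boundary term entirely, sharpening the estimates to $\int_0^\infty\int_\Omega (uw+vw)\,dx\,dt<\infty$ and thereby forcing $\|w(\cdot,t)\|_{L^\infty}\to 0$ as $t\to\infty$. At the first $T^\ast$ for which $\max(\chi_1,\chi_2)\|w(\cdot,T^\ast)\|_{L^\infty}<\pi\sqrt{2/n}$, I restart the problem from $T^\ast$ (smoothing the time-$T^\ast$ data if necessary to restore the regularity required by \eqref{initial-data-reg}) and invoke \cite{ZT19} to obtain a unique classical solution on $[T^\ast,\infty)$ enjoying \eqref{uvw-cov-zt}; uniqueness identifies this with the weak solution constructed in (B2).

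The chief obstacle is (B3), which requires (i) passing from the integral bound $\int_0^\infty\!\int(uw+vw)<\infty$ to the pointwise decay $\|w(\cdot,t)\|_{L^\infty}\to 0$ via delicate Neumann heat-semigroup analysis, and (ii) rigorously restarting a merely weak solution as a classical one with admissible initial data. A secondary subtlety in (B1) is the non-convex boundary term: using $|\nabla w|^2/w$ rather than the cleaner $|\nabla\sqrt w|^2$ (which would demand convexity) introduces the boundary integral noted above, whose absorption leans on the 2D Sobolev trace inequality and is not available in higher dimensions.
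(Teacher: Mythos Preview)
Your overall architecture follows Tao--Winkler closely and matches the paper in spirit, but there are two points where your plan diverges in ways that matter.

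\medskip
\textbf{The entropy functional (minor).} Your functional $E(t)=\int u\log u+\int v\log v+K\int|\nabla w|^2/w$ with a single free constant $K$ will not produce the clean cancellation of the cross terms $\chi_1\int\nabla u\cdot\nabla w$ and $\chi_2\int\nabla v\cdot\nabla w$, because the $|\nabla w|^2/w$ evolution generates $-\alpha\int\nabla u\cdot\nabla w-\beta\int\nabla v\cdot\nabla w$. The paper weights the three pieces by $\alpha\chi_2$, $\beta\chi_1$, $\frac{\chi_1\chi_2}{2}$ respectively, obtaining an exact identity with \emph{no} right-hand side except the boundary integral. Incidentally, that boundary integral is controlled in the paper in \emph{all} dimensions via the trace estimate and the bound $\int|\nabla w|^4/w^3\le(2+\sqrt n)^2\int w|D^2\log w|^2$, not only in 2D as you suggest; this is what later lets (B2) work in non-convex domains.

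\medskip
\textbf{The regularization (different but workable).} You propose to regularize the sensitivity by $\nabla w/(1+\varepsilon|\nabla w|)$; the paper instead regularizes the cell densities via $F_\varepsilon(s)=\varepsilon^{-1}\ln(1+\varepsilon s)$ ($n=3$) or $s/(1+\varepsilon s)$ ($n=4,5$), replacing $u\nabla w$ by $uF_\varepsilon'(u)\nabla w$ and $uw$ by $F_\varepsilon(u)w$. The advantage of the paper's choice is that the entropy identity above survives verbatim for the approximants, immediately giving the uniform bounds $\int\!\!\int|\nabla u_\varepsilon|^2/u_\varepsilon$, $\int\!\!\int|\nabla w_\varepsilon|^4$, etc. Your regularization would require reworking the energy estimate, and the $(u_\varepsilon+1)^s$ testing you mention is not how the required $L^{(n+2)/n}$ and $L^{(n+2)/(n+1)}$ bounds are actually obtained here.

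\medskip
\textbf{The restart argument in (B3) (genuine gap).} This is where your plan breaks. You want to locate $T^\ast$ with $\max(\chi_1,\chi_2)\|w(\cdot,T^\ast)\|_{L^\infty}<\pi\sqrt{2/n}$, restart from $T^\ast$, and invoke \cite{ZT19}. Two obstructions: (i) at $T^\ast$ you only know $(u,v)(\cdot,T^\ast)$ lies in $L^{(n+2)/n}$, far below the $C^0(\overline\Omega)$ regularity demanded by \eqref{initial-data-reg}, and ``smoothing the data'' destroys the connection to your weak solution; (ii) even granting a classical solution on $[T^\ast,\infty)$, there is \emph{no uniqueness theorem for weak solutions} of \eqref{PPT}, so you cannot identify it with the weak solution from (B2). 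The paper circumvents both issues by never leaving the regularized level: it shows $\|w_\varepsilon(\cdot,t)\|_{L^\infty}\to 0$ \emph{uniformly in $\varepsilon$} as $t\to\infty$, then runs a weighted $L^p$ estimate (test function $\phi(w_\varepsilon)=(2\delta-w_\varepsilon)^{-r}$) on the classical approximants to get eventual $\varepsilon$-uniform $L^p$ bounds on $u_\varepsilon,v_\varepsilon$, bootstraps to $\varepsilon$-uniform $C^{2+\sigma}$ bounds for $t\ge T$, and finally passes to the limit via Arzel\`a--Ascoli. Eventual smoothness and the identification with the weak solution then come for free from the convergence $u_\varepsilon\to u$ in $C^{2,1}_{\mathrm{loc}}(\overline\Omega\times[T,\infty))$.
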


In 2D setting, our global boundedness of classical solutions  in  (B1) removes the smallness  condition  \eqref{ZTcond} with $n=2$ as required in \cite{ZT19}. In 3D setting, our   global existence weak solutions relaxes the commonly used convexity assumption on $\Omega$   in the literature, cf. \cite{Ren-Liu-4, TW12-JDE, Win17-JDE}. Moreover, the known eventual smoothness  and large time  behavior    of weak solutions  in 3D convex domains has been extended to 4 and 5D and our findings also extend and improve the one-species chemotaxis-consumption model studied in  \cite{Ren-Liu-4, TW12-JDE}.

The layout of this paper is structured as follows: In Sect. 2, we combine and extend existing technique to study local and global well-posedness  with focus on boundedness and convergence in 2D for \eqref{PPT}, which relies on a crucial evolution identity \eqref{ulnu+gradv/v-fin}. In Sect. 3, we formulate  the approximating  system of \eqref{PPT},  introduce the concept of weak solutions and derive basic properties of approximating solutions. In Sect. 4, we  motivate  and extend arguments mainly from  \cite{TW12-JDE, Win17-JDE} to derive global existence, eventual smoothness  and convergence (in convex domains)  of weak solutions in 3, 4 and 5D, as detailed in Subsect. 4.2 and Subsect. 4.3. The convexity of domain $\Omega$ could  be removed mainly because the boundary integral emerging  from the identity \eqref{ulnu+gradv/v-fin} can be properly controlled in a manner as in \eqref{boundary-esti-fin}.

\section{Boundedness and convergence in 2D}
\subsection{Basic facts and Local existence}
In the subsequent analysis, we shall need the well-known Gagliardo-Nirenberg interpolation inequality, we list it here for convenience of reference.
\begin{lemma}\label{GN-inter}(Gagliardo-Nirenberg interpolation inequality \cite{Fried,  Nirenberg66}) Let $p\geq 1$ and  $q\in (0,p)$. Then there exists a positive constant  $C_{GN}=C_{p,q}$ such that
 $$
 \|w\|_{L^p(\Omega)} \leq C_{GN}\Bigr(\|\nabla w\|_{L^2(\Omega)}^{\delta}\|w\|_{L^q(\Omega)}^{(1-\delta) }+\|w\|_{L^r(\Omega)}\Bigr), \quad \forall w\in H^1(\Omega)\cap L^q(\Omega),
 $$
where $r>0$ is arbitrary and  $\delta$ is given by
 $$
 \frac{1}{p}=\delta(\frac{1}{2}-\frac{1}{n})+\frac{1-\delta}{q}\Longleftrightarrow \delta=\frac{\frac{1}{q}-\frac{1}{p}}{\frac{1}{q}+\frac{1}{n}-\frac{1}{2}}\in(0,1).
 $$
\end{lemma}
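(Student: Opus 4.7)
The plan is to prove the Gagliardo--Nirenberg estimate in two stages: first establish the whole-space version for $w\in C_c^\infty(\mathbb{R}^n)$ by Sobolev embedding plus H\"older interpolation, and then transfer to the bounded domain $\Omega$ via a continuous extension operator, with the additive correction $\|w\|_{L^r(\Omega)}$ arising naturally to absorb the lower-order contributions that the extension introduces.

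For the whole-space step, I would begin with the elementary one-dimensional identity $|w(x)|\le \int_{-\infty}^{\infty}|\partial_i w|\,ds$ in each coordinate direction, which combined with H\"older in the remaining variables yields $W^{1,1}(\mathbb{R}^n)\hookrightarrow L^{n/(n-1)}(\mathbb{R}^n)$; applied to $|w|^\gamma$ for the right power $\gamma$, this produces $H^1(\mathbb{R}^n)\hookrightarrow L^{2^*}(\mathbb{R}^n)$ with $2^* = 2n/(n-2)$ when $n\ge 3$ (and into every finite $L^s$ when $n\le 2$). Since $1/2^* = 1/2 - 1/n$, for $q\in(0,p)$ with $p\le 2^*$, the H\"older decomposition $|w|^p = |w|^{p\delta}\cdot|w|^{p(1-\delta)}$ gives
\[
\|w\|_{L^p(\mathbb{R}^n)}\le \|w\|_{L^{2^*}(\mathbb{R}^n)}^{\delta}\|w\|_{L^q(\mathbb{R}^n)}^{1-\delta}\le C\|\nabla w\|_{L^2(\mathbb{R}^n)}^{\delta}\|w\|_{L^q(\mathbb{R}^n)}^{1-\delta},
\]
where the interpolation exponent is precisely the $\delta$ stated in the lemma. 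The range $p>2^*$ is recovered by applying the dilation $w_\lambda(x):=w(\lambda x)$ to a baseline case and optimising in $\lambda>0$; dimensional homogeneity selects the same $\delta$.

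For the transfer to $\Omega$, I invoke a Calder\'on/Stein extension operator $E:H^1(\Omega)\to H^1(\mathbb{R}^n)$ supported in a fixed bounded neighbourhood of $\overline{\Omega}$, satisfying $\|Ew\|_{L^s(\mathbb{R}^n)}\le C\|w\|_{L^s(\Omega)}$ and $\|\nabla Ew\|_{L^2(\mathbb{R}^n)}\le C(\|\nabla w\|_{L^2(\Omega)}+\|w\|_{L^2(\Omega)})$, which exists since $\partial\Omega$ is smooth. Applying the whole-space estimate to $Ew$, restricting the left side back to $\Omega$, and using the elementary bound $(a+b)^\delta\le a^\delta+b^\delta$ for $\delta\in(0,1)$, one obtains
\[
\|w\|_{L^p(\Omega)}\le C\|\nabla w\|_{L^2(\Omega)}^{\delta}\|w\|_{L^q(\Omega)}^{1-\delta} + C\|w\|_{L^2(\Omega)}^{\delta}\|w\|_{L^q(\Omega)}^{1-\delta}.
\]
On the bounded set $\Omega$, the second summand is controlled by a constant times $\|w\|_{L^r(\Omega)}$ for any prescribed $r>0$: if $r\ge\max(2,q)$ this is a direct H\"older inclusion, while smaller $r$ can be accommodated by further H\"older interpolation, possibly absorbing a fractional piece of the first (multiplicative) summand into the constant via Young's inequality.

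The main obstacle is the careful bookkeeping needed to isolate the multiplicative product $\|\nabla w\|_{L^2}^{\delta}\|w\|_{L^q}^{1-\delta}$ from the additive correction $\|w\|_{L^r}$; indeed, the sum form of the lemma is precisely the shape forced by the non-homogeneous nature of the extension bound. Secondary subtleties include handling $q\in(0,1)$, where $\|\cdot\|_{L^q}$ is only a quasi-norm and the H\"older step should be checked by hand (alternatively, one first proves the inequality for $q\ge 1$ and then extends via a density/truncation argument), and the low-dimensional cases $n=1,2$, where $2^*$ is replaced by any sufficiently large finite exponent with a matching adjustment of $\delta$.
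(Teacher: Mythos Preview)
The paper does not prove this lemma; it is stated with citations to the classical references \cite{Fried, Nirenberg66} and used as a black box throughout. Your sketch, by contrast, reconstructs the inequality from first principles via the standard route (Sobolev embedding on $\mathbb{R}^n$, H\"older interpolation, extension to the bounded domain), which is correct in outline and is essentially how the cited references proceed. One small remark: the case $p>2^*$ that you mention handling by dilation is in fact excluded by the hypothesis $\delta\in(0,1)$ when $n\ge 3$, since $\delta<1$ forces $p<2^*$; so that side discussion is unnecessary. The genuine technical point you correctly flag is the treatment of $q\in(0,1)$, where the boundedness of the extension operator on $L^q(\Omega)$ is not entirely standard and deserves a sentence of justification (reflection-type extensions do preserve $L^q$ quasi-norms for $q<1$, but this should be stated rather than assumed).
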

Next, we state the following well-established local  solvability,    extendibility  and basic estimates  of solutions to  the  IBVP   \eqref{PPT}.
\begin{lemma}\label{local-in-time}Let $\chi_1,\chi_2, \alpha,\beta>0$ and $\Omega\subset\mathbb{R}^n$ $(n\geq 1)$ be a bounded and smooth domain, and let  $u_0,v_0$ and $w_0$ fulfill  \eqref{initial-data-reg} with $r>\max\{2, n\}$. Then there is a unique,  positive and  classical maximal solution $(u,v,w )$ of the IBVP \eqref{PPT} on some maximal interval $[0, T_m)$ with $0<T_m \leq \infty$ such that
\begin{align*}
&(u,v)\in \left(C\left(\overline{\Omega}\times [0, T_m)\right)\cap C^{2,1}(\overline{\Omega}\times (0, T_m))\right)^2, \\
 &w\in C\left(\overline{\Omega}\times [0, T_m)\right)\cap C^{2,1}(\overline{\Omega}\times (0, T_m))\cap W_{\text{loc}}^{1,\infty}((0,T_m), W^{1,r}(\Omega)).
\end{align*}
 If  $T_m<\infty$, then the following extensibility criterion holds:
$$
 \limsup_{t\nearrow T_m}\left(\|u(\cdot,t)\|_{L^{\infty}(\Omega)}+\|v(\cdot,t)\|_{L^{\infty}(\Omega)}
 +\|w(\cdot,t)\|_{W^{1,r}(\Omega)}\right)=\infty.
$$
Furthermore, $u$ and $v$ have conservation of mass within $(0, T_m)$:
\be\label{basic-est}
\|u(\cdot,t)\|_{L^1(\Omega)}=\|u_0\|_{L^1(\Omega)}, \ \  \  \  \|v(\cdot,t)\|_{L^1(\Omega)}=\|v_0\|_{L^1(\Omega)},
\ee
and, for any $p\in[1, \infty]$,  the $L^p$-norm of $w$ is non-increasing:
\be\label{reg-2.16-0}
t\mapsto\|w(\cdot,t)\|_{L^p(\Omega)} \text{ is  non-increasing in } [0,T_m);
\ee
in particular,
\be\label{reg-2.17-0}
\|w(\cdot,t)\|_{L^p(\Omega)}\leq\|w_0\|_{L^p(\Omega)}.
\ee
\end{lemma}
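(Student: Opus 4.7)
The plan is to establish Lemma \ref{local-in-time} by a standard contraction-mapping argument of the type developed for chemotaxis-consumption systems (see e.g.\ \cite{TW12-JDE, Win17-JDE}), and then to extract the mass conservation \eqref{basic-est} and the monotonicity \eqref{reg-2.16-0} by direct testing.

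For local existence, I would fix $T>0$ small and work in the Banach space $X_T:=C([0,T];C^0(\overline{\Omega}))^2\times C([0,T];W^{1,r}(\Omega))$. Given $(\tilde u,\tilde v,\tilde w)$ in a closed ball $B\subset X_T$ centered at $(u_0,v_0,w_0)$, define $\Phi(\tilde u,\tilde v,\tilde w):=(u,v,w)$ where $w$ is the mild solution of $w_t=\Delta w-(\alpha\tilde u+\beta\tilde v)w$ with initial datum $w_0$ and Neumann boundary condition, and $u,v$ solve the frozen linear advection-diffusion problems $u_t=\Delta u-\chi_1\nabla\cdot(u\nabla\tilde w)$, $v_t=\Delta v-\chi_2\nabla\cdot(v\nabla\tilde w)$ with respective initial data $u_0,v_0$. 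Standard $L^p$--$L^q$ smoothing estimates for the Neumann heat semigroup together with the embedding $W^{1,r}\hookrightarrow C^0(\overline{\Omega})$ (valid since $r>n$) show that for $T$ sufficiently small $\Phi$ leaves $B$ invariant and is a strict contraction, whose unique fixed point is a mild solution. The hypothesis $r>\max\{2,n\}$ then suffices to bootstrap via parabolic $L^p$ and Schauder theory to the classical regularity stated. Nonnegativity and then strict positivity of $u,v,w$ on $(0,T_m)$ follow equation-by-equation from the (strong) parabolic maximum principle, and the extensibility criterion is obtained by the usual restarting argument: were the listed norm bounded on $[0,T_m)$ with $T_m<\infty$, the local-existence statement could be reapplied near $T_m$ with a uniform time-step, contradicting maximality.

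For the mass identities \eqref{basic-est}, I would integrate the $u$- and $v$-equations over $\Omega$ and note that both the Laplacian term and the chemotactic divergence term vanish by the homogeneous Neumann condition, giving $\tfrac{d}{dt}\int_\Omega u=\tfrac{d}{dt}\int_\Omega v=0$. For the monotonicity \eqref{reg-2.16-0} with $p\in[1,\infty)$, multiplying the $w$-equation by $p w^{p-1}$ and integrating by parts over $\Omega$ yields
\[
\frac{d}{dt}\int_\Omega w^p = -\frac{4(p-1)}{p}\int_\Omega |\nabla w^{p/2}|^2 - p\int_\Omega(\alpha u+\beta v)w^p \leq 0,
\]
since $u,v,w\geq 0$; the case $p=\infty$ then follows either by passing to the limit $p\to\infty$ or directly from the parabolic comparison principle, with the constant $\|w_0\|_{L^\infty(\Omega)}$ acting as a supersolution of the $w$-equation. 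The bound \eqref{reg-2.17-0} is an immediate consequence.

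The only step that is not mechanical is the fixed-point construction, and there the main technical point is to verify contractivity of the coupling $(\tilde u,\tilde v)\mapsto \nabla w$ in the chosen norms; this is precisely what forces the regularity hypothesis $r>\max\{2,n\}$ on $w_0$, but once this is in place the argument proceeds along entirely standard lines and no new ideas are needed.
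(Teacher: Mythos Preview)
Your proposal is correct and follows essentially the same approach as the paper: the paper likewise cites a contraction-mapping argument and parabolic regularity theory (deferring details to \cite{BBTW15, Tao10, Tao11, Win10, Win12-CPDE, WSW16}) for local existence and the blow-up criterion, obtains mass conservation by integrating the $u$- and $v$-equations, derives the $L^p$-monotonicity of $w$ for finite $p$ by testing with $w^{p-1}$, and handles $p=\infty$ via the maximum principle. You simply spell out the fixed-point construction in more detail than the paper does, which is fine.
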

\begin{proof}The   local  existence, regularity and extendibility   of classical solutions to  the  IBVP   \eqref{PPT}  are based on contraction mapping argument and   parabolic regularity theory of parabolic systems,  which can be found in   \cite{BBTW15, Tao10, Tao11, Win10, Win12-CPDE,WSW16}.  The conservations of $u$ and $v$ in \eqref{basic-est} follows upon integration by parts due to the no-flux boundary conditions, and the positivity of solution,  \eqref{reg-2.16-0} and hence \eqref{reg-2.17-0} with $p=\infty$ follows from an application of the (strong) maximum principle. For the case $p\in[1,\infty)$, testing the $w$-equation by $w^{p-1}$ and integrating by parts, we get
$$
\frac{d}{dt}\int_\Omega w^p=-(p-1)\int_\Omega w^{p-2}|\nabla w|^2-\int_\Omega\left(\alpha u+\beta v\right)w\leq 0,
$$
which upon being integrated from $s$ to $t$ entails \eqref{reg-2.16-0}.
\end{proof}
Henceforth, we will denote   by $C_i$ various constants which may vary line by line.
\subsection{Uniform boundedness and global existence in 2D}
In this subsection, we shall extend the idea in \cite{Xiang18-NA} to show $2D$ boundedness and global existence of classical solutions to the IBVP \eqref{PPT} without any smallness condition, thus proving (B1). To gain our gaol, we first establish a series  of important a-priori estimates; we state them in $n$ dimensional setting since they are valid in arbitrary space dimensions and they are quite convenient in subsequent sections.

To move from $L^1$-boundness  obtained in \eqref{basic-est} and \eqref{reg-2.16-0} to  higher order $L^p$-regularity, we first compute from the $u$- and $v$-equations in   \eqref{PPT} that
\be\label{ulnu-id}
\begin{cases}
\frac{d}{dt}\int_\Omega u\ln u+\int_\Omega \frac{|\nabla u|^2}{u}=\chi_1\int_\Omega \nabla u\nabla w,\ \ \ \  \forall t\in (0, T_m), \\[0.2cm]
\frac{d}{dt}\int_\Omega v\ln v+\int_\Omega \frac{|\nabla v|^2}{v}=\chi_2\int_\Omega \nabla v\nabla w, \ \ \ \ \forall t\in (0, T_m).
\end{cases}
\ee
To cancel out exactly the chemotaxis involving terms  on the right-hand side of  \eqref{ulnu-id},  inspired from \cite{Win12-CPDE, Xiang18-NA}, we compute the following time evolution:
\begin{align*}
\frac{1}{2}\frac{d}{dt}\int_\Omega \frac{|\nabla w|^2}{w}=\int_\Omega \frac{\nabla w\nabla w_t}{w}-\frac{1}{2}\int_\Omega \frac{|\nabla w|^2}{w^2}w_t.
\end{align*}
For the first term, an integration by parts  along with the $w$-equation entails that
\begin{align*}
\int_\Omega \frac{\nabla w\nabla w_t}{w}&=\int_\Omega \frac{1}{w}\nabla w\cdot\nabla \Delta w- \int_\Omega \frac{\left(\alpha u+\beta v\right)}{w}|\nabla w|^2- \int_\Omega \left(\alpha\nabla u+\beta \nabla v\right)\nabla w\\
&=\frac{1}{2}\int_{\partial\Omega} \frac{1}{w} \frac{\partial |\nabla w|^2}{\partial\nu}+\frac{1}{2}\int_\Omega \frac{\nabla w\cdot\nabla|\nabla w|^2}{w^2}-\int_\Omega \frac{|D^2w|^2}{w}\\
&\ \ \ \ - \int_\Omega \frac{\left(\alpha u+\beta v\right)}{w}|\nabla w|^2- \alpha\int_\Omega  \nabla u \nabla w-\beta\int_\Omega  \nabla v \nabla w,
\end{align*}
where  we have applied the following point-wise identity
$$
2\nabla  w\cdot \nabla \Delta w=\Delta|\nabla w|^2-2|D^2w|^2, \quad |D^2w|^2=\sum_{i,j=1}^n|w_{x_ix_j}|^2.
$$
In the same sprit, for the second term, one has
\begin{align*}
\frac{1}{2}\int_\Omega \frac{|\nabla w|^2}{w^2}w_t=\int_\Omega \frac{|\nabla w|^4}{w^3}-\frac{1}{2}\int_\Omega \frac{\nabla w\cdot\nabla |\nabla w|^2}{w^2}-\frac{1}{2}\int_\Omega \frac{\left(\alpha u+\beta v\right)}{w}|\nabla w|^2.
\end{align*}
Collecting  these equalities together, we end up with
\be\label{gradv/v}
\begin{split}
&\frac{1}{2}\frac{d}{dt}\int_\Omega \frac{|\nabla w|^2}{w}+\left(\int_\Omega \frac{|D^2 w|^2}{w}- \int_\Omega \frac{\nabla w\cdot\nabla|\nabla w|^2}{w^2} +\int_\Omega \frac{|\nabla w|^4}{w^3}\right)\\
&=\frac{1}{2}\int_{\partial\Omega} \frac{1}{w} \frac{\partial |\nabla w|^2}{\partial\nu}-\frac{1}{2}\int_\Omega \frac{\left(\alpha u+\beta v\right)}{w}|\nabla w|^2- \alpha \int_\Omega \nabla u\nabla w-\beta \int_\Omega \nabla v\nabla w.
\end{split}
\ee
For later use, cf. Lemma \ref{ulnu+gradv2-bdd-lem}, it is a good place to  notice   (cf. \cite[(3.9)]{Xiang18-NA}) that
\be\label{vD^2}
 \int_\Omega w|D^2\ln w|^2=\int_\Omega \frac{|D^2 w|^2}{w}- 2\int_\Omega \frac{ \nabla w\cdot D^2 w\cdot \nabla w}{w^2} +\int_\Omega \frac{|\nabla w|^4}{w^3}.
\ee
Then the elementary inequality $-2ab\geq -\frac{1}{2}a^2-2b^2$  implies
\be\label{vd2-v3}
\int_\Omega w|D^2\ln w|^2\geq \frac{1}{2}\int_\Omega \frac{|D^2 w|^2}{w}-\int_\Omega \frac{|\nabla w|^4}{w^3}.
\ee
Plugging \eqref{vD^2}  into \eqref{gradv/v} and noticing $\nabla w\nabla|\nabla w|^2=2\nabla w\cdot D^2 w\cdot \nabla w$, we get  that
\be\label{gradv/v-fin}
\begin{split}
&\frac{1}{2}\frac{d}{dt}\int_\Omega \frac{|\nabla w|^2}{w}+\int_\Omega w|D^2\ln w|^2+\frac{1}{2}\int_\Omega \frac{\left(\alpha u+\beta v\right)}{w}|\nabla w|^2\\
&=\frac{1}{2}\int_{\partial\Omega} \frac{1}{w} \frac{\partial |\nabla w|^2}{\partial\nu} - \alpha \int_\Omega \nabla u\nabla w-\beta \int_\Omega \nabla v\nabla w.
\end{split}
\ee
Combining \eqref{ulnu-id} with  \eqref{gradv/v-fin},  we conclude an important identity for \eqref{PPT} as follows:
\be\label{ulnu+gradv/v-fin}
\begin{split}
&\frac{d}{dt}\left(\int_\Omega \alpha \chi_2 u\ln u+\beta \chi_1 v\ln v+\frac{\chi_1\chi_2}{2}\frac{|\nabla w|^2}{w}\right)+\alpha \chi_2\int_\Omega \frac{|\nabla u|^2}{u}\\
&\ \ \ + \beta\chi_1\int_\Omega \frac{|\nabla v|^2}{v} +\frac{\chi_1\chi_2}{2}\int_\Omega \frac{\left(\alpha u+\beta v\right)}{w}|\nabla w|^2+\chi_1\chi_2\int_\Omega w|D^2\ln w|^2\\
&=\frac{\chi_1\chi_2 }{2}\int_{\partial\Omega} \frac{1}{w} \frac{\partial |\nabla w|^2}{\partial\nu}.
\end{split}
\ee
Thanks to this crucial evolution identity, we then improve the basic estimates in \eqref{basic-est} and \eqref{reg-2.16-0} to  $(L^1, L^1, L^2)$-boundedness of $(u\ln u, v\ln v, \nabla w)$ as follows.
\begin{lemma}\label{ulnu+gradv2-bdd-lem} Under the basic conditions in Lemma \ref{local-in-time}, for any $\tau\in(0, T_m)$, there exists $C=C(u_0,v_0, w_0, \tau, |\Omega|)>0$  such that the solution $(u,v, w)$ of \eqref{PPT} verifies
\be\label{ulnu+gradv-int}
\int_\Omega \left(|u\ln u|+|v\ln v|+  |\nabla w|^2\right) (\cdot, t)  \leq C, \  \ \    \forall t\in (\tau, T_m).
\ee
\end{lemma}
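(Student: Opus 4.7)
The plan is to base the argument on the evolution identity \eqref{ulnu+gradv/v-fin} applied to the entropy-type functional $\mathcal{E}(t) := \alpha\chi_2\int_\Omega u\ln u + \beta\chi_1\int_\Omega v\ln v + \tfrac{\chi_1\chi_2}{2}\int_\Omega \frac{|\nabla w|^2}{w}$. Because $u\ln u \ge -1/e$, because $\|w(\cdot,t)\|_{L^\infty} \le \|w_0\|_{L^\infty}$ by \eqref{reg-2.17-0}, and because the masses $\|u(\cdot,t)\|_{L^1},\|v(\cdot,t)\|_{L^1}$ are preserved by \eqref{basic-est}, a uniform upper bound for $\mathcal{E}(t)$ on $(\tau, T_m)$ directly yields \eqref{ulnu+gradv-int}. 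In \eqref{ulnu+gradv/v-fin} everything on the left-hand side is a non-negative dissipation, so the only obstruction to a Gronwall-type argument is the boundary term $\mathcal{B}(t) := \frac{\chi_1\chi_2}{2}\int_{\partial\Omega} \frac{1}{w}\frac{\partial |\nabla w|^2}{\partial \nu}$.

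The heart of the argument is therefore the control of $\mathcal{B}(t)$. The Neumann condition $\partial_\nu w = 0$ together with the boundary Hessian formula gives $\partial_\nu |\nabla w|^2 \le 2\kappa_\Omega |\nabla w|^2$ on $\partial\Omega$, where $\kappa_\Omega \ge 0$ depends only on the $L^\infty$-norm of the second fundamental form of $\partial\Omega$ (and $\kappa_\Omega = 0$ when $\Omega$ is convex, in which case $\mathcal{B}\le 0$ and the matter is settled). For a general smooth $\Omega$ I would replace the surface integral $\int_{\partial\Omega}|\nabla w|^2/w$ by interior quantities: fix a smooth vector field $\vec a$ on $\overline{\Omega}$ with $\vec a \cdot \nu \equiv 1$ on $\partial\Omega$ and apply the divergence theorem to $\frac{|\nabla w|^2}{w}\vec a$, so that the trace becomes an interior integral whose expansion produces terms involving pairings of $|D^2 w|/\sqrt{w}$ with $|\nabla w|^2/w^{3/2}$ together with lower-order expressions bounded by $|\nabla w|^2/w$. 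Cauchy--Schwarz and Young's inequality, combined with the key pointwise bound \eqref{vd2-v3}, then let one absorb the offending contributions into the dissipation $\chi_1\chi_2 \int_\Omega w|D^2\ln w|^2$ (with an arbitrarily small multiplier) plus a multiple of the $\mathcal{E}$-ingredient $\int_\Omega |\nabla w|^2/w$ and a data-dependent constant; this is the content of the bound the authors label \eqref{boundary-esti-fin}.

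After this absorption, \eqref{ulnu+gradv/v-fin} reduces to an ODE-type inequality $\frac{d}{dt}\mathcal{E}(t) + c_1 \mathcal{D}(t) \le c_2 \mathcal{E}(t) + c_3$, where $\mathcal{D}(t)$ is a positive combination of $\int_\Omega |\nabla u|^2/u$, $\int_\Omega |\nabla v|^2/v$ and $\int_\Omega w|D^2\ln w|^2$. Since the strong maximum principle guarantees $w(\cdot,\tau) \ge m_\tau > 0$ on $\overline{\Omega}$, one has $\mathcal{E}(\tau) < \infty$, and a direct Gronwall comparison produces the desired bound \eqref{ulnu+gradv-int} with the constant depending on $\tau$ precisely through $m_\tau$ and through the initial $L^\infty$-size of $u_0, v_0$. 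The principal technical obstacle is the absorption step: the divergence expansion generates a term of the shape $|D^2 w|/w$ whose natural $L^2$-scaling matches exactly that of the available dissipation $\int_\Omega w|D^2\ln w|^2$, so smallness of its coefficient must be extracted by a carefully balanced Young-split that uses \eqref{vd2-v3} in its full strength; all remaining $|\nabla w|^3/w^2$ and $|\nabla w|^2/w$ contributions can then be handled as above.
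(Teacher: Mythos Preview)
Your outline follows the paper's strategy up to the boundary-term control, but the closing step has a genuine gap. From the differential inequality you record, namely $\frac{d}{dt}\mathcal{E}(t) + c_1\mathcal{D}(t) \le c_2\mathcal{E}(t) + c_3$, a ``direct Gronwall comparison'' (dropping the nonnegative $c_1\mathcal{D}$) only yields $\mathcal{E}(t) \le e^{c_2(t-\tau)}\mathcal{E}(\tau) + \frac{c_3}{c_2}(e^{c_2(t-\tau)}-1)$, which grows exponentially in $t$ and therefore does \emph{not} give the time-uniform constant $C=C(u_0,v_0,w_0,\tau,|\Omega|)$ claimed in \eqref{ulnu+gradv-int}. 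Since the purpose of this lemma is precisely to feed a uniform-in-time bound into the extensibility criterion, this is not a cosmetic issue.

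The paper closes this in two ways that you omit. First, the boundary absorption \eqref{boundary-esti-fin} is pushed one step further than you indicate: the residual term $C_\epsilon\int_\Omega |\nabla w|^2/w$ that you leave on the right-hand side is itself bounded via \eqref{wgrad-est} (Young with $\int_\Omega |\nabla w|^4/w^3$ and $\int_\Omega w$) and then \eqref{d2v4-vd2lnv} (the Winkler inequality $\int_\Omega |\nabla w|^4/w^3 \le (2+\sqrt{n})^2\int_\Omega w|D^2\ln w|^2$, which is stronger than \eqref{vd2-v3} that you invoke), so that after absorption the ODI reads $\mathcal{E}'(t)+c_1\mathcal{D}(t)\le C_1$ with \emph{no} $\mathcal{E}$ on the right; see \eqref{ulnu+gradv/v-fin+}. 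Second, the paper separately shows that $\mathcal{E}\le \eta\mathcal{D}+C_\eta$ for every $\eta>0$: the entropy pieces $\int_\Omega u\ln u$ and $\int_\Omega v\ln v$ are controlled by $\int_\Omega |\nabla u|^2/u$ and $\int_\Omega |\nabla v|^2/v$ via Gagliardo--Nirenberg and mass conservation (this is \eqref{ulnu-GN}), and $\int_\Omega |\nabla w|^2/w$ is controlled by $\int_\Omega w|D^2\ln w|^2$ via \eqref{wgrad-est+}. Combining these with the first step yields the genuinely dissipative ODI $\mathcal{E}'+C_2\mathcal{E}\le C_3$ of \eqref{ulnu+gradv/v-odi}, from which the uniform bound follows. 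Your argument can be repaired along the same lines, but you must explicitly invoke \eqref{d2v4-vd2lnv} and the $\mathcal{E}\lesssim\mathcal{D}+C$ estimate rather than appeal to Gronwall directly.
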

\begin{proof} In the case that $\Omega$ is convex (as  the case in \cite{TW12-JDE}), notice that    $\frac{\partial}{\partial\nu}|\nabla w|^2\leq 0$ due to $\frac{\partial w}{\partial \nu}=0$, and so the  boundary integral on the right-hand side of \eqref{ulnu+gradv/v-fin} is non-positive. Then a simple integration of \eqref{ulnu+gradv/v-fin}  from $\tau$ to $t$ gives rise to
\be\label{ulnu-vlnv-est1}
\begin{split}
&\int_\Omega \left(\alpha \chi_2 u\ln u+\beta \chi_1 v\ln v+\frac{\chi_1\chi_2}{2}\frac{|\nabla w|^2}{w}\right)(\cdot, t) \\
&\leq \int_\Omega \left(\alpha \chi_2 u\ln u+\beta \chi_1 v\ln v+\frac{\chi_1\chi_2}{2}\frac{|\nabla w|^2}{w}\right)(\cdot, \tau),  \ \ \ \forall t\in (\tau, T_m).
\end{split}
\ee
On the other hand, the algebraic fact that $ -z\ln z\leq e^{-1}$ for all $z>0$ implies
\be\label{abs} \begin{split}
&\int_\Omega \left(\alpha \chi_2 |u\ln u|+\beta \chi_1 |v\ln v|\right)(\cdot, t)\\
&=\alpha \chi_2\left( \int_\Omega u\ln u-2\int_{\{0<u<1\}}u\ln u\right)\\
&\ \ +\beta \chi_1\left( \int_\Omega v\ln v-2\int_{\{0<v<1\}}v\ln v\right) \\
&\leq \int_\Omega \left(\alpha \chi_2 u\ln u+\beta \chi_1 v\ln v\right)(\cdot, t)+2\left(\alpha \chi_2 +\beta \chi_1 \right)e^{-1}|\Omega|,
\end{split}
\ee
which along with \eqref{ulnu-vlnv-est1} and the fact $w\leq \|w_0\|_{L^\infty(\Omega)}$ immediately yields \eqref{ulnu+gradv-int}.

In the general case that $\Omega$ is non-convex,  the idea used to control the boundary integral in \eqref{ulnu+gradv/v-fin} has been detailed in \cite[(3.24)-(3.28)]{Xiang18-NA}. Here, we provide another version which does not involve $L^\infty$-norm of $w$ and thus somehow refine the outcome in \cite[(3.28)]{Xiang18-NA}. Indeed, thanks to \cite[Lemma 3.3]{Win12-CPDE}, we have
\be\label{d2v4-vd2lnv}
\int_\Omega \frac{|\nabla w|^4}{w^3}\leq (2+\sqrt{n})^2\int_\Omega w|D^2\ln w|^2,
\ee
which  together with \eqref{vd2-v3} further shows
\be\label{d2v-vd2lnv}
\int_\Omega \frac{|D^2 w|^2}{w}\leq 2[(2+\sqrt{n})^2+1]\int_\Omega |D^2\ln w|^2.
\ee
Now, repeating the arguments in \cite[(3.26)-(3.28)]{Xiang18-NA}, using \eqref{d2v4-vd2lnv} and  \eqref{d2v-vd2lnv}, we conclude,  for any $\epsilon>0$, there exists $C_\epsilon>0$ such that
 \be\label{boundary-esti-fin}
\begin{split}
&\frac{\chi_1\chi_2 }{2}\int_{\partial\Omega} \frac{1}{w} \frac{\partial |\nabla w|^2}{\partial\nu}\\
&\leq \frac{\epsilon}{2} \int_\Omega \left(\frac{2|D^2w|^2}{w}+\frac{|\nabla w|^4}{2w^3}\right)+C_\epsilon \int_\Omega\frac{|\nabla w|^2}{w}\\
&\leq  \epsilon  \int_\Omega \left(\frac{ |D^2w|^2}{w}+\frac{|\nabla w|^4}{w^3}\right)+C_\epsilon\int_\Omega w\\
&\leq \epsilon \left\{2\left[(2+\sqrt{n})^2+1\right]+(2+\sqrt{n})^2\right\} \int_\Omega w|D^2\ln w|^2+C_\epsilon \int_\Omega w_0,
\end{split} \ee
where, from the first  to the second inequality, we have used H\"{o}lder's inequality to estimate: for any $\eta>0$, there exists $C_\eta>0$ such that
\be\label{wgrad-est}
\int_\Omega\frac{|\nabla w|^2}{w}\leq \eta \int_\Omega\frac{|\nabla w|^4}{w^3}+C_\eta \int_\Omega w.
\ee
Now, choosing
$$
 \epsilon=\min\left\{1, \ \ \  \frac{\frac{\chi_1\chi_2}{2}}{2[(2+\sqrt{n})^2+1]+(2+\sqrt{n})^2}\right\}
$$
 in \eqref{boundary-esti-fin}, and then, substituting  \eqref{boundary-esti-fin} into \eqref{ulnu+gradv/v-fin}, we obtain a key  ordinary differential inequality (ODI) as follows:
 \be\label{ulnu+gradv/v-fin+}
\begin{split}
&\frac{d}{dt}\left(\int_\Omega \alpha \chi_2 u\ln u+\beta \chi_1 v\ln v+\frac{\chi_1\chi_2}{2}\frac{|\nabla w|^2}{w}\right)+\alpha \chi_2\int_\Omega \frac{|\nabla u|^2}{u}\\
&\ \ \ + \beta\chi_1\int_\Omega \frac{|\nabla v|^2}{v} +\frac{\chi_1\chi_2}{2}\int_\Omega w|D^2\ln w|^2\leq C_1.
\end{split}
\ee
By the  Gagliardo-Nirenberg  inequality (cf. Lemma \ref{GN-inter}) and the mass conservations of $u$ and $v$ in  \eqref{basic-est},  one can easily show (cf. \cite[(3.36)]{Xiang18-NA}, for any $\eta>0$,
\be\label{ulnu-GN}
\int_\Omega  u\ln u \leq  \eta\int_\Omega \frac{|\nabla u|^2}{u}+C_\eta, \ \ \ \ \int_\Omega  v\ln v \leq  \eta\int_\Omega \frac{|\nabla v|^2}{v}+C_\eta.
\ee
On the other hand, by \eqref{d2v4-vd2lnv} and \eqref{wgrad-est}, we readily infer that
\be\label{wgrad-est+}
\int_\Omega\frac{|\nabla w|^2}{w}\leq \eta \int_\Omega w|D^2\ln w|^2 +C_\eta.
\ee

Combining \eqref{ulnu-GN} and \eqref{wgrad-est+}  with \eqref{ulnu+gradv/v-fin+} and choosing sufficiently small $\eta>0$, we establish a key final ODI of the form:
\be\label{ulnu+gradv/v-odi}
\begin{split}
&\frac{d}{dt}\left(\int_\Omega \alpha \chi_2 u\ln u+\beta \chi_1 v\ln v+\frac{\chi_1\chi_2}{2}\frac{|\nabla w|^2}{w}\right)\\
&+C_2\left(\int_\Omega \alpha \chi_2 u\ln u+\beta \chi_1 v\ln v+\frac{\chi_1\chi_2}{2}\frac{|\nabla w|^2}{w}\right)\leq C_3.
\end{split}
\ee
Solving the  Gronwall inequality \eqref{ulnu+gradv/v-odi},  using  the trick as in \eqref{abs} and noticing the fact $w\leq \|w_0\|_{L^\infty(\Omega)}$, we finally end up with the desired estimate \eqref{ulnu+gradv-int}.
\end{proof}
In signal production single species chemotaxis models,  the boundedness information provided in \eqref{ulnu+gradv-int} is quite known to allow one to infer 2D global boundedness, cf. \cite{BBTW15, Xiangjde, Xiang18-JMP}. Here, in signal consumption multi-species cases, instead of using the technique used  in  \cite{TW12-JDE}, we shall also show that the compound boundedness in  \eqref{ulnu+gradv-int} enable us to derive first the  $(L^2, L^2, L^4)$-boundedness of $(u,v,\nabla w)$ and then  $(L^\infty, L^\infty, W^{1,\infty})$-boundedness of $(u,v,w)$, which  clarifies the boundedness proof for the case of $a_2\leq0$ in  \cite[the right-hand side of (3.34) therein indeed depends on $t$]{Xiang18-NA}.

\begin{lemma}\label{uvl2-gradwl4-bdd-lem} Let $\Omega\subset \mathbb{R}^2$ be bounded and smooth. Then for any $\tau\in(0, T_m)$, there exists $C=C(u_0,v_0, w_0, \tau, |\Omega|)>0$  such that the solution $(u,v, w)$ of \eqref{PPT} verifies
\be\label{uvl2+gradw-int}
\int_\Omega \left(u^2+v^2+  |\nabla w|^4\right) (\cdot, t)  \leq C, \  \ \    \forall t\in (\tau, T_m).
\ee
\end{lemma}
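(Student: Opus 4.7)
The plan is to derive a coupled differential inequality for the functional
\[
y(t):=\int_\Omega u^2+\int_\Omega v^2+\int_\Omega|\nabla w|^4,
\]
which, when combined with the entropy-type bounds $\int u\ln u,\int v\ln v,\int|\nabla w|^2\le C$ supplied by Lemma~\ref{ulnu+gradv2-bdd-lem}, will close via Gronwall. The computation splits into three parallel energy estimates whose cross terms are balanced by 2D Gagliardo--Nirenberg.

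First, testing the $u$- and $v$-equations in \eqref{PPT} by $u$ and $v$, respectively, and applying Young's inequality gives
\[
\frac{d}{dt}\int_\Omega u^2+\int_\Omega|\nabla u|^2\le \chi_1^2\int_\Omega u^2|\nabla w|^2,
\]
and an analogous estimate for $v$. Second, starting from the pointwise identity
\[
\partial_t|\nabla w|^2=\Delta|\nabla w|^2-2|D^2w|^2-2(\alpha u+\beta v)|\nabla w|^2-2w\,\nabla w\cdot(\alpha\nabla u+\beta\nabla v),
\]
testing by $|\nabla w|^2$ and integrating by parts produces
\[
\tfrac14\tfrac{d}{dt}\int_\Omega|\nabla w|^4+\int_\Omega|D^2w|^2|\nabla w|^2+\tfrac12\int_\Omega\bigl|\nabla|\nabla w|^2\bigr|^2\le \tfrac12\int_{\partial\Omega}|\nabla w|^2\partial_\nu|\nabla w|^2+R,
\]
where $R$ collects cross terms involving $(\alpha u+\beta v)w\,\nabla w$ that, owing to $\|w(\cdot,t)\|_{L^\infty}\le\|w_0\|_{L^\infty}$, are absorbable into the dissipation plus $\int(u^2+v^2)|\nabla w|^2$. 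In the non-convex case the boundary integral is treated exactly as in Lemma~\ref{ulnu+gradv2-bdd-lem} using $\partial_\nu|\nabla w|^2\le 2\kappa|\nabla w|^2$ and a trace/interpolation argument, modulo lower-order remainders.

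Third, the couplings are closed via 2D Gagliardo--Nirenberg. Writing $\int u^2|\nabla w|^2\le\|u\|_{L^4}^2\|\nabla w\|_{L^4}^2$ and using
\[
\|u\|_{L^4}^4\le C\|\nabla u\|_{L^2}^2\|u\|_{L^2}^2+C\|u\|_{L^2}^4,\qquad
\|\nabla w\|_{L^4}^4\le C\bigl\|\nabla|\nabla w|^2\bigr\|_{L^2}^2\,\|\nabla w\|_{L^2}^2+C\|\nabla w\|_{L^2}^4,
\]
followed by Young, one absorbs the dissipations $\int|\nabla u|^2$, $\int|\nabla v|^2$, $\int|D^2w|^2|\nabla w|^2$ and $\int|\nabla|\nabla w|^2|^2$. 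The factors $\|\nabla w\|_{L^2}^2$ are already bounded by Lemma~\ref{ulnu+gradv2-bdd-lem}, while the surviving $\|u\|_{L^2}^2$ (resp. $\|v\|_{L^2}^2$) is handled by the entropy-based 2D interpolation
\[
\|u\|_{L^2}^2\le \varepsilon\|\nabla u\|_{L^2}^2+C_\varepsilon\Bigl(1+\int_\Omega u\ln u\Bigr),
\]
in conjunction with Lemma~\ref{ulnu+gradv2-bdd-lem}. Summing the three resulting inequalities yields a closed ODI of the shape $y'(t)+c_1 y(t)\le c_2$ on $(\tau,T_m)$, whence Gronwall delivers \eqref{uvl2+gradw-int}.

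The main obstacle is the simultaneous balancing in the third step: the chemotactic cross term $\int u^2|\nabla w|^2$ couples the first-order dissipation $\int|\nabla u|^2$ with the second-order dissipation $\int|D^2w|^2|\nabla w|^2$, and both must be absorbed with room to spare so that only $\|u\|_{L^2}^2$ and $\|\nabla w\|_{L^2}^2$ type factors remain, which in turn must be controlled by the entropy-based 2D interpolation and the $\|\nabla w\|_{L^2}^2$ bound from the previous lemma. The boundary term in the non-convex case is a secondary but routine complication, already defused by the trace-inequality technique used in Lemma~\ref{ulnu+gradv2-bdd-lem}.
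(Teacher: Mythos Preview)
Your overall framework---test $u,v$ by themselves, differentiate $\int|\nabla w|^4$, then close via 2D interpolation---is exactly the paper's strategy, and your treatment of the boundary term and of the cross terms $R$ is fine. The gap is in your ``third step'': the $L^4$--$L^4$ H\"older splitting $\int u^2|\nabla w|^2\le\|u\|_{L^4}^2\|\nabla w\|_{L^4}^2$ does not close as you describe. First, the 2D Gagliardo--Nirenberg inequality for $f=|\nabla w|^2$ reads $\|f\|_{L^2}^2\le C\|\nabla f\|_{L^2}\|f\|_{L^1}+C\|f\|_{L^1}^2$, so the exponent on $\|\nabla|\nabla w|^2\|_{L^2}$ is $1$, not $2$ as you wrote. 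More seriously, once you multiply the correct factors and apply Young so as to absorb both $\|\nabla u\|_{L^2}^2$ and $\|\nabla|\nabla w|^2\|_{L^2}^2$, the surviving remainder is $\|u\|_{L^2}^4$, not $\|u\|_{L^2}^2$. Your entropy interpolation $\|u\|_{L^2}^2\le\varepsilon\|\nabla u\|_{L^2}^2+C_\varepsilon$ (which, incidentally, follows from mass conservation alone in 2D) then yields $\|u\|_{L^2}^4\le C\varepsilon^2\|\nabla u\|_{L^2}^4+\cdots$, and there is no $\|\nabla u\|_{L^2}^4$ in the dissipation to absorb this. Equivalently, you end up with $y'\le Cy^2+C$, which does not give a uniform bound.

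The paper's fix is to replace the $L^4$--$L^4$ H\"older step by a Young splitting $\int u^2|\nabla w|^2\le \epsilon_1\int|\nabla w|^6+C_{\epsilon_1}\int u^3$. The point is that in 2D, the \emph{logarithmic} Gagliardo--Nirenberg inequality (this is where the entropy bound from Lemma~\ref{ulnu+gradv2-bdd-lem} is genuinely used) gives $\int u^3\le \epsilon_2\int|\nabla u|^2+C_{\epsilon_2}$ directly, with no $\|u\|_{L^2}$ factor; and the standard 2D GN applied to $|\nabla w|^2$ gives $\int|\nabla w|^6\le C\int|\nabla|\nabla w|^2|^2+C$ using only the $L^2$ bound on $\nabla w$. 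Both terms now absorb cleanly with small coefficients, and one obtains $y'+y\le C$.
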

\begin{proof} Integrating by parts from \eqref{PPT} and using Cauchy-Schwarz inequality, we get
\be\label{uvl2-est}
\begin{cases}
\frac{d}{dt}\int_\Omega u^2+\int_\Omega  |\nabla u|^2\leq\chi_1^2\int_\Omega u^2|\nabla w|^2,\ \ \ \  \forall t\in (0, T_m), \\[0.25cm]
\frac{d}{dt}\int_\Omega v^2+\int_\Omega  |\nabla v|^2\leq\chi_2^2\int_\Omega v^2|\nabla w|^2, \ \ \ \ \forall t\in (0, T_m).
\end{cases}
\ee
Similarly, taking  gradient of the $w$-equation and then multiplying  it by $|\nabla w|^2\nabla w$  and finally integrating over $\Omega$ by parts, we derive that
\be\label{wl4-id}
\begin{split}
&\frac{d}{dt}\int_\Omega |\nabla w|^4+2\int_\Omega |\nabla |\nabla w|^2|^2+4\int_\Omega |\nabla w|^2 |D^2w|^2\\
&=2\int_{\partial\Omega} |\nabla w|^2\frac{\partial}{\partial \nu} |\nabla w |^2+4\int_\Omega (\alpha u+\beta v)w \nabla|\nabla w|^2\nabla w\\
&\ \ +4\int_\Omega (\alpha u+\beta v)w  |\nabla w|^2\Delta w.
\end{split}
\ee
Next, based on \eqref{uvl2-est} and \eqref{wl4-id}, we estimate the terms on the right-hand side of \eqref{wl4-id}. For the boundary integral,  one can  use  (cf. \cite{TX18-JMAA, Xiang18-SIAM, Xiang18-JMP}) the boundary trace embedding  to bound it in terms of the boundedness   of  $\|\nabla  w \|_{L^2}^2$ in \eqref{ulnu+gradv-int} to infer, for any $\epsilon>0$,  there exists $C_\epsilon>0$ such that
\be\label{com-est-4}
\begin{split}
2\int_{\partial\Omega}  |\nabla w|^{2}\frac{\partial}{\partial \nu} |\nabla w|^2&\leq \epsilon \int_\Omega |\nabla |\nabla w|^2|^2+C_\epsilon \left(\int_{\Omega} |\nabla w|^2\right)^2\\
&\leq \epsilon \int_\Omega |\nabla |\nabla w|^2|^2+C_\epsilon,\ \ \   \forall \epsilon>0.
\end{split}
\ee
Noticing $\|w\|_{L^\infty}\leq \|w_0\|_{L^\infty}$, we deduce that
\be\label{mix-est1}
\begin{split}
&4\int_\Omega \left(\alpha u+\beta v\right)w \nabla|\nabla w|^2\nabla w\\
&\leq  \frac{1}{2}\int_\Omega |\nabla|\nabla w|^2|^2+8\|w_0\|_{L^\infty}^2 \int_\Omega \left(\alpha u+\beta v\right)^2|\nabla w|^2 \\
&\leq \frac{1}{2}\int_\Omega |\nabla|\nabla w|^2|^2+16\alpha^2\|w_0\|_{L^\infty}^2 \int_\Omega u^2|\nabla w|^2 +16\beta^2\|w_0\|_{L^\infty}^2 \int_\Omega v^2|\nabla w|^2
\end{split}
\ee
and, similarly, since $|\Delta w|^2\leq 2|D^2w|^2$, we obtain that
\be\label{mix-est2}
\begin{split}
&4\int_\Omega (\alpha u+\beta v)w  |\nabla w|^2\Delta w\\
&\leq 4\int_\Omega |\nabla w|^2 |D^2w|^2+4\alpha^2\|w_0\|_{L^\infty}^2 \int_\Omega u^2|\nabla w|^2 +4\beta^2\|w_0\|_{L^\infty}^2 \int_\Omega v^2|\nabla w|^2.
\end{split}
\ee
Combining   the estimates \eqref{uvl2-est}, \eqref{wl4-id}, \eqref{com-est-4} with $\epsilon=\frac{1}{2}$,  \eqref{mix-est1} and \eqref{mix-est2}, we conclude a Key ODI as follows, for $ t\in (0, T_m)$,
\be\label{uvl2-gradwl4-est}
\begin{split}
&\frac{d}{dt}\int_\Omega \left(u^2+v^2+|\nabla w|^4\right)+\int_\Omega \left( |\nabla u|^2+|\nabla v|^2+|\nabla |\nabla w|^2|^2\right)\\
&\leq\left(\chi_1^2+20\alpha^2\|w_0\|_{L^\infty}^2\right) \int_\Omega u^2|\nabla w|^2+\left(\chi_2^2+20\beta^2\|w_0\|_{L^\infty}^2\right) \int_\Omega v^2|\nabla w|^2.
\end{split}
\ee
Now, the Young's inequality with epsilon shows, for any $\epsilon_1>0$, that
\be\label{yound-epi}
\int_\Omega u^2|\nabla w|^2+ \int_\Omega v^2|\nabla w|^2\leq 2\epsilon_1 \int_\Omega |\nabla w|^6+\frac{2}{3\sqrt{3\epsilon_1}} \int_\Omega u^3+\frac{2}{3\sqrt{3\epsilon_1}} \int_\Omega v^3.
\ee
Now, thanks to the compound boundedness information in \eqref{ulnu+gradv-int}, using the usual 2D GN inequality as in Lemma \ref{GN-inter} and its extended version involving logarithmic functions  (cf. \cite[Lemma A. 5]{TW14-JDE} or \cite[Lemma 3.4]{XZ19-non}), we can easily deduce, for any $\epsilon_2, \epsilon_3$,  there exist $C_{\epsilon_2}, C_{\epsilon_3}>0$ and $C>0$ such that
\be\label{uvwl3-l6-est}
\begin{cases}
 \int_\Omega u^2+ \int_\Omega u^3\leq\epsilon_2\int_\Omega  |\nabla u|^2+C_{\epsilon_2}, \\[0.25cm]
\int_\Omega v^2+ \int_\Omega v^3\leq\epsilon_3\int_\Omega  |\nabla v|^2+C_{\epsilon_3}, \\[0.25cm]
\int_\Omega  |\nabla w|^6\leq C\int_\Omega |\nabla |\nabla w|^2|^2+C.
\end{cases}
\ee
Substituting  \eqref{yound-epi} and \eqref{uvwl3-l6-est} into \eqref{uvl2-gradwl4-est} and then choosing sufficiently small $\epsilon_i>0$, we finally obtain a simple yet important ODE as follows:
$$
 \frac{d}{dt}\int_\Omega \left(u^2+v^2+|\nabla w|^4\right)+\int_\Omega \left(u^2+v^2+|\nabla w|^4\right)\leq C,
$$
which immediately entails \eqref{uvl2+gradw-int}.
\end{proof}
\begin{proof}[\bf{Proof of 2D global existence, boundedness and convergence}] In light of  the gained  $(L^2, L^2, L^4)$-boundedness of $(u,v,\nabla w)$ in \eqref{uvl2+gradw-int} and the $L^\infty$-boundedness of $w$ in \eqref{reg-2.17-0}, using semigroup type arguments to $w$-equation, one can easily derive first $ W^{1, q}$-boundedness of $w$ for any finite $q$, and then, testing the $u,v$-equations to derive $(L^3, L^3)$-boundedness of $(u,v)$, and then,  using semigroup type arguments to $w$-equation again to derive $ W^{1, \infty}$-boundedness of $w$, and finally, applying semigroup type arguments to $u,v$-equations to derive   $(L^\infty, L^\infty, W^{1, \infty})$-boundedness of $(u,v,  w)$ as in \eqref{linfty-bdd}, see details in    e.g. \cite{BBTW15, XZ19-non, ZT19}, for instance. The convergence in  \eqref{uvw-cov-zt} goes in the same way as \cite{TW12-JDE, ZT19}.
\end{proof}

\section{Preliminaries on weak solutions in 3D or higher dimensions}
In this section, we first introduce the concept of weak solutions, and then, we state some useful lemmas for later use.
\begin{definition}\label{weak-uvw} By a global weak solution of \eqref{PPT}, we mean a triple $(u,v,w)$ of nonnegative functions
\begin{equation*}
n=3:\ \ \begin{cases}
u\in L^1_{loc}([0,\infty);L^1(\Omega)),\\
v\in L^1_{loc}([0,\infty);L^1(\Omega))\mbox{ and }\\
w\in L^1_{loc}([0,\infty);W^{1,1}(\Omega)),
\end{cases}  \end{equation*}
\begin{equation*} n=4,5:\ \ \begin{cases}
u\in L^1_{loc}([0,\infty);W^{1,1}(\Omega)),\\
v\in L^1_{loc}([0,\infty);W^{1,1}(\Omega))\mbox{ and }\\
w\in L^{\infty}_{loc}(\overline{\Omega}\times[0,\infty))\cap L^1_{loc}([0,\infty);W^{1,1}(\Omega)),
\end{cases}  \end{equation*}
such that
\begin{equation*}
uw,\ vw,\ u\nabla w\ and \ v\nabla w\  belong\  to \ L^1_{loc}([0,\infty);L^1(\Omega))
\end{equation*}
and that the following identities motivated from integration by parts
\be\label{reg-2.4}
-\int^{\infty}_0\int_{\Omega}u\varphi_t-\int_{\Omega}u_0\varphi(\cdot,0)=\int^{\infty}_0\int_{\Omega}\nabla u\cdot\nabla\varphi+\chi_1\int^{\infty}_0\int_{\Omega}u\nabla w\cdot\nabla\varphi,
\ee
\be\label{reg-2.5}
-\int^{\infty}_0\int_{\Omega}v\varphi_t-\int_{\Omega}v_0\varphi(\cdot,0)
=-\int^{\infty}_0\int_{\Omega}\nabla v\cdot\nabla\varphi+\chi_2\int^{\infty}_0\int_{\Omega}v\nabla w\cdot\nabla\varphi
\ee
and
\be\label{reg-2.6}
-\int^{\infty}_0\int_{\Omega}w\varphi_t-\int_{\Omega}w_0\varphi(\cdot,0)
=-\int^{\infty}_0\int_{\Omega}\nabla w\cdot\nabla\varphi-\int^{\infty}_0\int_{\Omega}(\alpha u+\beta v)w\varphi
\ee
hold for all $\varphi\in C^{\infty}_0(\Omega\times[0,\infty))$.
\end{definition}

In order to achieve global solvability within this framework through an appropriate regularization process, for $\varepsilon\in(0,1)$, let us define $ F_\varepsilon: [0, \infty)\mapsto \mathbb{R}^+$ by
\be\label{reg-2.7}
 F_\varepsilon(s):=\begin{cases}
 \frac{1}{\varepsilon}\ln(1+\varepsilon s),    &\text{if  } n=3, \\[0.2cm]
 \frac{s}{1+\varepsilon s},    &\text{if  } n=4, 5.
 \end{cases}
\ee
It then follows easily that the $C^{\infty}([0,\infty))$-family $(F_\varepsilon)_{\varepsilon\in(0,1)}$ has the properties that
\be\label{reg-2.9}
 F_\varepsilon(0)=0,  \ \  F_\varepsilon(s)\rightarrow s \text{ as } \varepsilon\searrow 0  \ \mbox{ and } \ \ 0< F'_\varepsilon(s)\leq1 \ \ \mbox{ for all} \ s\geq0,
\ee
and that, for any $s\geq0$,
\be\label{reg-2.10}
  0\leq F'_\varepsilon(s)\nearrow1 \mbox{ as } \varepsilon\searrow0, \ \ \ \  0\leq sF'_\varepsilon(s) \leq \frac{1}{\varepsilon}, \  \  0\leq -sF''_\varepsilon(s) \leq 2,   \ \forall \varepsilon\in(0,1).
\ee
Then, for $\varepsilon\in (0, 1)$, we consider the following regularized problem:
\be \label{reg-2.11}\begin{cases}
u_{\varepsilon t} = \Delta u_{\varepsilon}-\chi_1 \nabla \cdot (u_{\varepsilon}F'_{\varepsilon}(u_{\varepsilon})\nabla w_{\varepsilon}),  &x\in \Omega, t>0, \\[0.2cm]
v_{\varepsilon t} = \Delta v_{\varepsilon}-\chi_2 \nabla \cdot (v_{\varepsilon}F'_{\varepsilon}(v_{\varepsilon})\nabla w_{\varepsilon}), & x\in \Omega, t>0, \\[0.2cm]
w_{\varepsilon t} = \Delta w_{\varepsilon} -(\alpha F_{\varepsilon}(u_{\varepsilon})+\beta F_{\varepsilon}(v_{\varepsilon}))w_{\varepsilon},  & x\in \Omega, t>0, \\[0.2cm]
\frac{\partial u_{\varepsilon}}{\partial \nu}=\frac{\partial v_{\varepsilon}}{\partial \nu}=\frac{\partial w_{\varepsilon}}{\partial \nu}=0, & x\in \partial \Omega, t>0,\\[0.2cm]
u_{\varepsilon}(x,0)=u_0(x) \ , v_{\varepsilon}(x,0)=v_0(x) \ ,  w_{\varepsilon}(x,0)=w_0(x) \ , & x\in \Omega.  \end{cases}  \ee

The framework of contraction mapping argument first allows one to conclude local well-posedness on $(0, T_{m, \varepsilon})$, and then, given the basic estimates in  Lemmas \ref{lemma-2.4} and \ref{lemma-2.5} on  $(0, T_{m, \varepsilon})$ and the choice of $F_\varepsilon$ in \eqref{reg-2.7}, using Neumann semigroup estimates to the $w$-equation in \eqref{reg-2.11},  one can easily see that $\|\nabla w_\varepsilon\|_{L^\infty}$ is uniformly bounded on $(0, T_{m, \varepsilon})$, and this allows one to conclude finally $T_{m, \varepsilon}=\infty$, namely, the global existence of classical solution to the approximating system \eqref{reg-2.11}; see,  quite  detailed display of  similar reasonings in related circumstances \cite{Hor-Win, Tao11, TW12-JDE, Win12-CPDE, Win17-JDE}.
\begin{lemma} Let  $\chi_1,\chi_2>0$ and $\alpha,\beta>0$ and $\Omega\subset\mathbb{R}^n(n\geq 1)$ be a bounded and smooth domain, and, let $F_\varepsilon$ be defined by \eqref{reg-2.7} and initial data $(u_0,v_0,w_0)$ satisfy \eqref{initial-data-reg}. Then for each $\varepsilon\in(0,1)$, the system \eqref{reg-2.11} admits a global classical solution $(u_{\varepsilon},v_{\varepsilon}, w_{\varepsilon})$ such that $u_{\varepsilon}>0$, $v_{\varepsilon}>0$ and $w_{\varepsilon}>0$ on  $\bar{\Omega}\times(0,\infty)$.
\end{lemma}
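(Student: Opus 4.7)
The plan is to follow the two-step scheme sketched in the paragraph preceding the statement: \textbf{(i)} establish local-in-time existence, regularity, and positivity on some maximal interval $(0,T_{m,\varepsilon})$; \textbf{(ii)} show via a standard extensibility criterion that in fact $T_{m,\varepsilon}=\infty$. Throughout, the crucial structural feature is the saturation built into $F_\varepsilon$ in \eqref{reg-2.7}, namely $0\leq F_\varepsilon'(s)\leq 1$ and $0\leq sF_\varepsilon'(s)\leq 1/\varepsilon$, which renders all nonlinearities in \eqref{reg-2.11} essentially bounded (for fixed $\varepsilon$), unlike in the unregularized system \eqref{PPT}.

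For step (i), I would set up a Banach fixed-point argument exactly as in \cite{BBTW15, Tao10, Win12-CPDE}: given $(\tilde u,\tilde v,\tilde w)$ in a ball of $C^0(\overline{\Omega}\times[0,T])^3$, solve the three decoupled linear parabolic Neumann problems obtained by freezing the nonlinear coefficients, and show for small $T=T(\varepsilon,u_0,v_0,w_0)$ that the solution map is a contraction. Smoothness of the resulting solution on $(0,T)$ is obtained from standard parabolic Schauder and $L^p$ theory applied successively to the three equations, using $w_0\in W^{1,r}(\Omega)$ with $r>\max\{2,n\}$. Continuing this solution gives a maximal existence time $T_{m,\varepsilon}\in(0,\infty]$ together with the extensibility criterion
\[
T_{m,\varepsilon}<\infty \ \Longrightarrow \ \limsup_{t\nearrow T_{m,\varepsilon}}\bigl(\|u_\varepsilon(\cdot,t)\|_{L^\infty(\Omega)}+\|v_\varepsilon(\cdot,t)\|_{L^\infty(\Omega)}+\|w_\varepsilon(\cdot,t)\|_{W^{1,r}(\Omega)}\bigr)=\infty.
\]
Positivity $u_\varepsilon,v_\varepsilon,w_\varepsilon>0$ follows from the strong maximum principle applied to each equation separately, since $F_\varepsilon(0)=0$ so $0$ is a subsolution of the $u_\varepsilon$- and $v_\varepsilon$-equations, while the $w_\varepsilon$-equation is linear in $w_\varepsilon$ with bounded absorption coefficient $\alpha F_\varepsilon(u_\varepsilon)+\beta F_\varepsilon(v_\varepsilon)\in L^\infty$.

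For step (ii), I argue by contradiction: assume $T_{m,\varepsilon}<\infty$. The $w_\varepsilon$-equation is a linear parabolic Neumann problem with nonnegative bounded source/absorption, hence the maximum principle yields $\|w_\varepsilon(\cdot,t)\|_{L^\infty(\Omega)}\leq\|w_0\|_{L^\infty(\Omega)}$ on $[0,T_{m,\varepsilon})$. Since by \eqref{reg-2.10} we have $\|F_\varepsilon(u_\varepsilon)\|_{L^\infty}\leq 1/\varepsilon$ and $\|F_\varepsilon(v_\varepsilon)\|_{L^\infty}\leq 1/\varepsilon$ (this is the key $\varepsilon$-dependent bound), writing the $w_\varepsilon$-equation in the Duhamel form
\[
w_\varepsilon(\cdot,t)=e^{t\Delta}w_0-\int_0^t e^{(t-s)\Delta}\bigl[(\alpha F_\varepsilon(u_\varepsilon)+\beta F_\varepsilon(v_\varepsilon))w_\varepsilon\bigr](\cdot,s)\,ds
\]
and invoking the standard $L^p$--$L^q$ smoothing and gradient estimates for the Neumann heat semigroup (cf. \cite{Win10}) produces a bound
\[
\|\nabla w_\varepsilon(\cdot,t)\|_{L^\infty(\Omega)}\leq C(\varepsilon,T_{m,\varepsilon},u_0,v_0,w_0), \qquad t\in(0,T_{m,\varepsilon}).
\]
With this $L^\infty$ bound on $\nabla w_\varepsilon$ in hand, the $u_\varepsilon$- and $v_\varepsilon$-equations are uniformly parabolic with bounded drift coefficients $\chi_i F_\varepsilon'(u_\varepsilon)\nabla w_\varepsilon$ and $\chi_i F_\varepsilon'(v_\varepsilon)\nabla w_\varepsilon$; a Moser-type iteration (or semigroup arguments as in \cite{TW12-JDE, Win17-JDE}) starting from the $L^1$-bounds \eqref{basic-est} then yields $\|u_\varepsilon(\cdot,t)\|_{L^\infty}+\|v_\varepsilon(\cdot,t)\|_{L^\infty}\leq C$ uniformly on $(0,T_{m,\varepsilon})$. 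Boot-strapping once more into the $w_\varepsilon$-equation upgrades the $W^{1,\infty}$-bound to a $W^{1,r}$-bound, contradicting the extensibility criterion; hence $T_{m,\varepsilon}=\infty$.

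The main obstacle is not conceptual but book-keeping: one must make sure that every constant in step (ii) is finite (possibly blowing up as $\varepsilon\searrow 0$ or $t\nearrow\infty$) and that the argument for the $\nabla w_\varepsilon$-bound is independent of $T_{m,\varepsilon}$ in a way that genuinely forbids blow-up. Since for each \emph{fixed} $\varepsilon>0$ the pointwise bound $F_\varepsilon\leq 1/\varepsilon$ is available, this is routine, which is why the paragraph preceding the statement simply refers to \cite{Hor-Win, Tao11, TW12-JDE, Win12-CPDE, Win17-JDE} rather than reproducing the calculations. The uniform-in-$\varepsilon$ estimates, which are the genuinely delicate part of the paper, are not needed here and will be developed in the subsequent sections.
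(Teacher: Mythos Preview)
Your overall two–step scheme (local existence via fixed point plus extensibility criterion, then a priori bounds exploiting the saturation in $F_\varepsilon$) matches exactly the sketch the paper gives in the paragraph preceding the lemma; the paper offers no more detail than you do and defers to \cite{Hor-Win, Tao11, TW12-JDE, Win12-CPDE, Win17-JDE}. So at the level of strategy you are aligned with the paper.

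There is, however, one concrete slip in step~(ii). You write ``by \eqref{reg-2.10} we have $\|F_\varepsilon(u_\varepsilon)\|_{L^\infty}\leq 1/\varepsilon$'', and then use this to bound $\nabla w_\varepsilon$ directly from the Duhamel formula. That bound is \emph{not} what \eqref{reg-2.10} says: the listed property is $0\leq sF'_\varepsilon(s)\leq 1/\varepsilon$, which controls the chemotactic coefficient $u_\varepsilon F'_\varepsilon(u_\varepsilon)$, not $F_\varepsilon(u_\varepsilon)$. For the choice $F_\varepsilon(s)=\frac{s}{1+\varepsilon s}$ (the case $n=4,5$) your claim happens to be true anyway, since $\frac{s}{1+\varepsilon s}\leq \frac{1}{\varepsilon}$. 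But for $n=3$ one has $F_\varepsilon(s)=\varepsilon^{-1}\ln(1+\varepsilon s)$, which is \emph{unbounded} as $s\to\infty$. Consequently your Duhamel estimate for $\nabla w_\varepsilon$ cannot be carried out independently of a priori control on $u_\varepsilon,v_\varepsilon$, and the clean ``first $\nabla w_\varepsilon$, then $u_\varepsilon,v_\varepsilon$'' order breaks down.

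The fix is standard and is implicit in the references you and the paper cite (in particular \cite{TW12-JDE}): use instead the genuinely $\varepsilon$-uniform bound $u_\varepsilon F'_\varepsilon(u_\varepsilon)\leq 1/\varepsilon$ on the taxis flux and run a coupled bootstrap. For instance, from $F_\varepsilon(s)\leq s$ and the mass conservation \eqref{reg-2.15} one first gets $(\alpha F_\varepsilon(u_\varepsilon)+\beta F_\varepsilon(v_\varepsilon))w_\varepsilon$ bounded in $L^\infty_t L^1_x$; semigroup smoothing then gives $\nabla w_\varepsilon\in L^\infty_t L^q_x$ for some $q>1$, which feeds back into the $u_\varepsilon,v_\varepsilon$-equations (whose drift has the bounded amplitude $1/\varepsilon$) to raise their integrability, and a finite iteration closes in $L^\infty$. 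The rest of your argument (Moser iteration, contradiction with the extensibility criterion) then goes through unchanged.
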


\begin{lemma}\label{lemma-2.4} For all $\varepsilon\in(0,1)$, the solution of \eqref{reg-2.11} satisfies, for $t>0$,
\be\label{reg-2.15}
\|u_{\varepsilon}(\cdot,t)\|_{L^1(\Omega)}=\|u_0\|_{L^1(\Omega)}, \ \ \|v_{\varepsilon}(\cdot,t)\|_{L^1(\Omega)}=\|v_0\|_{L^1(\Omega)}.
\ee
\end{lemma}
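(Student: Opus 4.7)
The statement is the natural mass conservation for the regularized system \eqref{reg-2.11}, and the proof is a one-line integration, so my proposal is mainly to record what each ingredient does rather than overtly compute.

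The plan is to integrate the $u_\varepsilon$-equation of \eqref{reg-2.11} over $\Omega$ and appeal to the divergence theorem together with the homogeneous Neumann conditions on both $u_\varepsilon$ and $w_\varepsilon$. More precisely, for any fixed $\varepsilon\in(0,1)$, the classical solution obtained in the previous lemma is sufficiently regular so that we may differentiate under the integral sign to get
\[
\frac{d}{dt}\int_\Omega u_\varepsilon \;=\; \int_\Omega \Delta u_\varepsilon \;-\;\chi_1\int_\Omega \nabla\cdot\bigl(u_\varepsilon F'_\varepsilon(u_\varepsilon)\nabla w_\varepsilon\bigr).
\]
Both integrands on the right are in divergence form, and the boundary conditions $\frac{\partial u_\varepsilon}{\partial\nu}=\frac{\partial w_\varepsilon}{\partial\nu}=0$ on $\partial\Omega$ make the corresponding boundary integrals vanish, yielding $\frac{d}{dt}\int_\Omega u_\varepsilon\equiv 0$. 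Integrating from $0$ to $t$ and using the initial condition $u_\varepsilon(\cdot,0)=u_0$ gives the first identity in \eqref{reg-2.15}. Since $u_\varepsilon>0$ by the strong maximum principle (as already asserted in the preceding lemma), the $L^1$-norm coincides with the integral, so no absolute values need to be tracked.

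The argument for $v_\varepsilon$ is word-for-word identical, differing only in the coefficient $\chi_2$ in place of $\chi_1$ and in the use of $\frac{\partial v_\varepsilon}{\partial\nu}=0$. I do not anticipate any genuine obstacle: the only point worth a moment's care is to confirm that $u_\varepsilon F'_\varepsilon(u_\varepsilon)\nabla w_\varepsilon$ has enough regularity for Green's identity, but this is immediate from the classical $C^{2,1}$-regularity of $(u_\varepsilon,v_\varepsilon,w_\varepsilon)$ on $\overline{\Omega}\times(0,\infty)$ together with the smooth dependence of $F_\varepsilon$ from \eqref{reg-2.7}. Hence the lemma follows at once.
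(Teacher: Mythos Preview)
Your proposal is correct and follows exactly the same route as the paper's own proof, which simply says to integrate the first and second equations in \eqref{reg-2.11} and use the no-flux boundary conditions. You have merely spelled out the details (divergence theorem, positivity for the $L^1$-norm, regularity check) that the paper leaves implicit.
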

\begin{proof}
Integrating the first and second equations in \eqref{reg-2.11} and the no-flux boundary conditions, we immediately obtain \eqref{reg-2.15}.
\end{proof}

\begin{lemma}\label{lemma-2.5} Let $\varepsilon\in(0,1)$ and $p\in[1,\infty]$. Then  the solution of \eqref{reg-2.11} verifies
\be\label{reg-2.16}
t\mapsto\|w_{\varepsilon}(\cdot,t)\|_{L^p(\Omega)} \ \ is \ nonincreasing \ in \ [0,\infty).
\ee
In particular,
\be\label{reg-2.17}
\|w_{\varepsilon}(\cdot,t)\|_{L^p(\Omega)}\leq\|w_0\|_{L^p(\Omega)}.
\ee
\end{lemma}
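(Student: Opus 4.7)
The plan is to mimic the proof of Lemma~2.2 (the analogous statement for the original system) with essentially no change, since the regularization preserves the key sign structure of the $w$-equation. The decisive point is that the coefficient $\alpha F_\varepsilon(u_\varepsilon)+\beta F_\varepsilon(v_\varepsilon)$ in the $w_\varepsilon$-equation is nonnegative, thanks to $\alpha,\beta\geq 0$, the nonnegativity of $F_\varepsilon$ (from $F_\varepsilon(0)=0$ and $F_\varepsilon'\geq 0$), and the nonnegativity of $u_\varepsilon,v_\varepsilon$. So the $w_\varepsilon$-equation behaves like the heat equation with a nonnegative absorption term, and both of these mechanisms are norm-nonincreasing under homogeneous Neumann boundary data.

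For $p\in[1,\infty)$, I would test the third equation of \eqref{reg-2.11} against $w_\varepsilon^{p-1}$ (which is admissible since $w_\varepsilon>0$ is already known from the previous lemma) and integrate by parts using $\partial_\nu w_\varepsilon=0$ to obtain
\begin{equation*}
\frac{1}{p}\frac{d}{dt}\int_\Omega w_\varepsilon^p
=-(p-1)\int_\Omega w_\varepsilon^{p-2}|\nabla w_\varepsilon|^2
-\int_\Omega\bigl(\alpha F_\varepsilon(u_\varepsilon)+\beta F_\varepsilon(v_\varepsilon)\bigr)w_\varepsilon^p\leq 0,
\end{equation*}
and then integrate in time from $s$ to $t$ to conclude monotonicity. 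For $p=\infty$ I would either pass to the limit $p\to\infty$ in the bound just obtained (using the uniform $L^\infty$-continuity of the classical solution on compact time intervals), or invoke the parabolic maximum principle directly: at an interior maximum point of $w_\varepsilon$ one has $\Delta w_\varepsilon\leq 0$ and the absorption term has the favorable sign, while at boundary maxima the Hopf lemma together with the no-flux condition rules out any increase.

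Finally, \eqref{reg-2.17} follows by evaluating \eqref{reg-2.16} at $t=0$ against an arbitrary later time. I do not expect any real obstacle here; the only mildly delicate point is justifying the $w_\varepsilon^{p-1}$-testing for $p\in[1,2)$, which is why I would invoke the strict positivity $w_\varepsilon>0$ from the preceding global existence lemma so that $w_\varepsilon^{p-2}|\nabla w_\varepsilon|^2$ is locally integrable, making the integration by parts rigorous. No property of $F_\varepsilon$ beyond $F_\varepsilon\geq 0$ is used, so the same argument works uniformly in $\varepsilon\in(0,1)$ and in both cases $n=3$ and $n=4,5$ of the definition of $F_\varepsilon$.
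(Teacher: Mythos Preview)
Your proposal is correct and follows essentially the same approach as the paper: the paper notes that $F_\varepsilon\geq 0$ and $\alpha,\beta,u_\varepsilon,v_\varepsilon,w_\varepsilon\geq 0$ imply $w_{\varepsilon t}\leq \Delta w_\varepsilon$, then invokes the maximum principle for $p=\infty$ and the $w_\varepsilon^{p-1}$-testing argument (referring back to the proof of Lemma~\ref{local-in-time}) for $p\in[1,\infty)$. Your treatment is slightly more careful in justifying the testing for $p\in[1,2)$ via strict positivity of $w_\varepsilon$, but otherwise the two arguments coincide.
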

\begin{proof} Notice that  $F_\varepsilon$ and $\alpha, \beta, u_\varepsilon,v_\varepsilon, w_\varepsilon$ are nonnegative, we have $w_{\varepsilon t}\leq \Delta w_{\varepsilon}$, and thus, using the maximum principle and energy estimate, we readily derive \eqref{reg-2.16} and   \eqref{reg-2.17}, see details in  Lemma \ref{local-in-time}.
\end{proof}

\begin{lemma} For all $\varepsilon\in(0,1)$. we have
\be\label{reg-2.18}
\alpha\int^{\infty}_0\int_{\Omega}F_\varepsilon(u_{\varepsilon})w_{\varepsilon}+\beta\int^{\infty}_0\int_{\Omega}F_\varepsilon(v_{\varepsilon})w_{\varepsilon}\leq\int_{\Omega}w_0.
\ee
In particular, the limit triple $(u,v,w)$ defined by Lemma \ref{lemma-4.8} satisfies
\be\label{reg-2.19}
\alpha\int^{\infty}_0\int_{\Omega}u w+\beta\int^{\infty}_0\int_{\Omega}v w\leq\int_{\Omega}w_0.
\ee
\end{lemma}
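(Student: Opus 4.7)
The plan is to exploit the simplest possible energy identity satisfied by $w_\varepsilon$: integrating the third equation of \eqref{reg-2.11} over $\Omega$ and using the homogeneous Neumann boundary condition, one has
$$
\frac{d}{dt}\int_{\Omega}w_{\varepsilon}(\cdot,t)=-\int_{\Omega}\bigl(\alpha F_{\varepsilon}(u_{\varepsilon})+\beta F_{\varepsilon}(v_{\varepsilon})\bigr)w_{\varepsilon},\qquad t>0.
$$
This already reduces the problem to a bookkeeping exercise, since both $F_\varepsilon$ and $w_\varepsilon$ are nonnegative by the positivity statement preceding Lemma \ref{lemma-2.4}.

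Integrating the above identity in time from $0$ to an arbitrary $T>0$ and dropping the nonnegative term $\int_\Omega w_\varepsilon(\cdot,T)$ on the left gives
$$
\alpha\int_{0}^{T}\!\!\int_{\Omega}F_\varepsilon(u_{\varepsilon})w_{\varepsilon}+\beta\int_{0}^{T}\!\!\int_{\Omega}F_\varepsilon(v_{\varepsilon})w_{\varepsilon}=\int_{\Omega}w_0-\int_{\Omega}w_{\varepsilon}(\cdot,T)\leq\int_{\Omega}w_0.
$$
Since both integrands are nonnegative, the left-hand side is nondecreasing in $T$ and bounded by $\int_\Omega w_0$, so passing to the supremum over $T$ yields \eqref{reg-2.18}. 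This first part involves no real obstacle; it is essentially the mass balance for $w_\varepsilon$.

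For the second assertion \eqref{reg-2.19}, the plan is to pass to the limit $\varepsilon\searrow 0$ in \eqref{reg-2.18}. By Lemma \ref{lemma-4.8} (which, from the structure of the paper, must supply a.e.\ convergence along some subsequence $\varepsilon_k\searrow 0$ of $u_{\varepsilon_k}\to u$, $v_{\varepsilon_k}\to v$, $w_{\varepsilon_k}\to w$), combined with the pointwise convergence $F_\varepsilon(s)\to s$ from \eqref{reg-2.9}, one obtains
$$
F_{\varepsilon_k}(u_{\varepsilon_k})\,w_{\varepsilon_k}\longrightarrow uw,\qquad F_{\varepsilon_k}(v_{\varepsilon_k})\,w_{\varepsilon_k}\longrightarrow vw\qquad\text{a.e.\ in }\Omega\times(0,\infty).
$$
All integrands being nonnegative, Fatou's lemma applied to the sequence indexed by $\varepsilon_k$ yields
$$
\alpha\int_{0}^{\infty}\!\!\int_{\Omega}uw+\beta\int_{0}^{\infty}\!\!\int_{\Omega}vw\leq\liminf_{k\to\infty}\Bigl\{\alpha\int_{0}^{\infty}\!\!\int_{\Omega}F_{\varepsilon_k}(u_{\varepsilon_k})w_{\varepsilon_k}+\beta\int_{0}^{\infty}\!\!\int_{\Omega}F_{\varepsilon_k}(v_{\varepsilon_k})w_{\varepsilon_k}\Bigr\}\leq\int_{\Omega}w_0,
$$
which is \eqref{reg-2.19}.

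The only mildly delicate step is identifying the limits of the products $F_{\varepsilon_k}(u_{\varepsilon_k})w_{\varepsilon_k}$; but the sign structure makes this painless since we are only seeking an upper bound, and Fatou does all the work. Consequently, no genuine obstacle is expected: the whole argument amounts to integrating the $w_\varepsilon$-equation, dropping a nonnegative boundary-in-time term, and invoking Fatou in the limit.
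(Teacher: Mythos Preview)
Your proof is correct and follows essentially the same route as the paper's own argument: integrate the $w_\varepsilon$-equation over $\Omega$ and then in time, drop the nonnegative term $\int_\Omega w_\varepsilon(\cdot,T)$ to obtain \eqref{reg-2.18}, and then invoke Fatou's lemma together with the a.e.\ convergence from Lemma~\ref{lemma-4.8} to pass to the limit for \eqref{reg-2.19}. The paper's proof is simply a terser version of what you wrote.
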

\begin{proof} Integrating the third equation in \eqref{reg-2.11} we get
$$
\int_{\Omega}w_{\varepsilon}(\cdot,t)+\alpha\int^t_0
\int_{\Omega}F_\varepsilon(u_{\varepsilon})w_{\varepsilon}
+\beta\int^t_0\int_{\Omega}F_\varepsilon(v_{\varepsilon})w_{\varepsilon}=\int_{\Omega}w_0
$$
for all $t\geq0$. Due to $w_{\varepsilon}\geq0$, it immediately derives \eqref{reg-2.18}. Applying the Fatou's lemma, we also readily conclude \eqref{reg-2.19}. This completes the proof.
\end{proof}

\section{Global dynamics of weak solutions  in 3,4 and 5D}
In this section, we first establish important a-priori $\varepsilon$-independent estimates for classical solutions to  \eqref{reg-2.11} and then we pass to the limit as $\varepsilon\rightarrow 0$ to show  the global existence and convergence of weak solutions in the sense of Definition \ref{weak-uvw} for the IBVP \eqref{PPT} in $3,4,5$-D, and thus accomplishing (B2) and (B3) in Theorem \ref{global-ext-3d-4d-5d}.
\begin{lemma}\label{lemma-4.1}  There exists positive constant  $K_1=K_1(\|u_0\|_{L^1},\|v_0\|_{L^1}, \|w_0\|_{L^1})>0$ such that the global solution of \eqref{reg-2.11} fulfills,  for  all $\varepsilon\in(0,1)$ and $t>0$,
\be\label{int-4.30}\begin{split}
 & \frac{d}{dt} \int_\Omega \left(\alpha \chi_2u_{\varepsilon} \ln u_{\varepsilon}+ \beta \chi_1  v_{\varepsilon}\ln v_{\varepsilon}+\frac{\chi_1\chi_2}{2} \frac{|\nabla w_{\varepsilon}|^2} {w_{\varepsilon}} \right)\\
&\ \  +\int_\Omega \left(\alpha \chi_2u_{\varepsilon} \ln u_{\varepsilon}+ \beta \chi_1  v_{\varepsilon}\ln v_{\varepsilon}+\frac{\chi_1\chi_2}{2} \frac{|\nabla w_{\varepsilon}|^2} {w_{\varepsilon}} \right)\\
&\ \  +\frac{\alpha\chi_2}{2}\int_\Omega\frac{|\nabla u_{\varepsilon}|^2}{u_{\varepsilon}} + \frac{\beta \chi_1}{2}\int_\Omega\frac{|\nabla v_{\varepsilon}|^2}{v_{\varepsilon}}+\frac{\chi_1\chi_2}{2}\int_\Omega w_{\varepsilon}|D^2\ln w_{\varepsilon}|^2\\
&\  \  +\frac{\chi_1\chi_2}{2}\int_\Omega \left(\alpha F_\varepsilon(u_{\varepsilon})+\beta F_\varepsilon(v_{\varepsilon})\right)\frac{|\nabla w_{\varepsilon}|^2}{w_{\varepsilon}}\\
&\leq \begin{cases} 0, & \text{if } \Omega  \text{ is convex}, \\[0.2cm]
 K_1, & \text{if } \Omega  \text{ is non-convex}.
 \end{cases}
\end{split}
\ee
Hence, there exists $K_2:=K_2(u_0, v_0, w_0)>0$ such that
\be\label{ulnu+gradv-int-eps}
\int_\Omega \left(|u_\varepsilon\ln u_\varepsilon|+|v_\varepsilon\ln v_\varepsilon|+  |\nabla w
_\varepsilon|^2+\frac{|\nabla w
_\varepsilon|^2}{w
_\varepsilon}\right) (\cdot, t)  \leq K_2, \  \ \    \forall t\in (0,\infty).
\ee
\end{lemma}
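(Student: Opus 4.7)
The plan is to mimic at the regularized level the three-step derivation of the identity \eqref{ulnu+gradv/v-fin}. Testing the $u_\varepsilon$-equation in \eqref{reg-2.11} by $\ln u_\varepsilon+1$ yields the entropy identity $\frac{d}{dt}\int_\Omega u_\varepsilon\ln u_\varepsilon+\int_\Omega\frac{|\nabla u_\varepsilon|^2}{u_\varepsilon}=\chi_1\int_\Omega F_\varepsilon'(u_\varepsilon)\nabla u_\varepsilon\cdot\nabla w_\varepsilon$ (and similarly for $v_\varepsilon$), while computing $\frac{1}{2}\frac{d}{dt}\int_\Omega\frac{|\nabla w_\varepsilon|^2}{w_\varepsilon}$ via the pointwise identity $2\nabla w_\varepsilon\cdot\nabla\Delta w_\varepsilon=\Delta|\nabla w_\varepsilon|^2-2|D^2 w_\varepsilon|^2$ reproduces \eqref{gradv/v-fin} with $(\alpha u+\beta v)$ replaced by $(\alpha F_\varepsilon(u_\varepsilon)+\beta F_\varepsilon(v_\varepsilon))$ and with cross terms $-\alpha\int F_\varepsilon'(u_\varepsilon)\nabla u_\varepsilon\cdot\nabla w_\varepsilon-\beta\int F_\varepsilon'(v_\varepsilon)\nabla v_\varepsilon\cdot\nabla w_\varepsilon$. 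Weighting the three identities by $\alpha\chi_2$, $\beta\chi_1$ and $\chi_1\chi_2$ respectively produces exact cancellation of the $F_\varepsilon'$-weighted cross terms, which is where the structural matching between the flux coefficient $u_\varepsilon F_\varepsilon'(u_\varepsilon)$ and the consumption coefficient $F_\varepsilon(u_\varepsilon)$ in \eqref{reg-2.11} is essential, and leaves the $\varepsilon$-analog of \eqref{ulnu+gradv/v-fin} together with the boundary integral $\frac{\chi_1\chi_2}{2}\int_{\partial\Omega}\frac{1}{w_\varepsilon}\frac{\partial|\nabla w_\varepsilon|^2}{\partial\nu}$.

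Next I would dispose of the boundary integral according to the geometry of $\Omega$. In the convex case the no-flux condition forces $\frac{\partial|\nabla w_\varepsilon|^2}{\partial\nu}\leq 0$, so the right-hand side is already nonpositive, which gives the first alternative of \eqref{int-4.30}. In the non-convex case I would replay \eqref{boundary-esti-fin}: the pointwise bounds \eqref{d2v4-vd2lnv}--\eqref{d2v-vd2lnv} depend only on smoothness and positivity of $w_\varepsilon$ and so carry over verbatim, and combining them with the trace inequality of \cite[(3.26)-(3.28)]{Xiang18-NA}, the interpolation \eqref{wgrad-est} and the mass bound \eqref{reg-2.17} yields
\begin{equation*}
\frac{\chi_1\chi_2}{2}\int_{\partial\Omega}\frac{1}{w_\varepsilon}\frac{\partial|\nabla w_\varepsilon|^2}{\partial\nu}\leq\frac{\chi_1\chi_2}{4}\int_\Omega w_\varepsilon|D^2\ln w_\varepsilon|^2+C\int_\Omega w_0,
\end{equation*}
which absorbs half of the $\chi_1\chi_2\int_\Omega w_\varepsilon|D^2\ln w_\varepsilon|^2$ dissipation on the left and generates the constant $K_1$ depending only on $\|u_0\|_{L^1}$, $\|v_0\|_{L^1}$ and $\|w_0\|_{L^1}$.

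To promote the resulting identity to the Gronwall ODI \eqref{int-4.30}, I add the quantity $\int_\Omega(\alpha\chi_2 u_\varepsilon\ln u_\varepsilon+\beta\chi_1 v_\varepsilon\ln v_\varepsilon+\frac{\chi_1\chi_2}{2}|\nabla w_\varepsilon|^2/w_\varepsilon)$ to both sides and absorb the added terms into the remaining dissipation: \eqref{ulnu-GN} (legitimate via the mass conservations \eqref{reg-2.15}) absorbs $\int u_\varepsilon\ln u_\varepsilon$ and $\int v_\varepsilon\ln v_\varepsilon$ into $\int|\nabla u_\varepsilon|^2/u_\varepsilon$ and $\int|\nabla v_\varepsilon|^2/v_\varepsilon$, and \eqref{wgrad-est+} absorbs $\int|\nabla w_\varepsilon|^2/w_\varepsilon$ into $\int w_\varepsilon|D^2\ln w_\varepsilon|^2$, each for a suitably small parameter. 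Integrating the resulting ODI via Gronwall then gives uniform boundedness in $\varepsilon\in(0,1)$ and $t>0$ of $\int(\alpha\chi_2 u_\varepsilon\ln u_\varepsilon+\beta\chi_1 v_\varepsilon\ln v_\varepsilon+\frac{\chi_1\chi_2}{2}|\nabla w_\varepsilon|^2/w_\varepsilon)$; applying the algebraic trick \eqref{abs} ($-z\ln z\leq e^{-1}$) converts this into the $L^1$ bounds for $|u_\varepsilon\ln u_\varepsilon|$ and $|v_\varepsilon\ln v_\varepsilon|$ in \eqref{ulnu+gradv-int-eps}, while multiplying $|\nabla w_\varepsilon|^2/w_\varepsilon$ by $\|w_0\|_{L^\infty}$ via \eqref{reg-2.17} yields the $|\nabla w_\varepsilon|^2$ bound.

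The main obstacle is not the algebra, which is a structural replica of the computation in Section~2, but rather verifying that every constant entering the estimates remains independent of $\varepsilon$; this is automatic for the cross-term cancellation and the pointwise bounds \eqref{d2v4-vd2lnv}--\eqref{d2v-vd2lnv}, but requires checking that the trace and Sobolev constants used in the boundary estimate depend only on $\Omega$ and not on the amplitude of $F_\varepsilon$. A secondary subtlety concerns the initial time: since $w_0$ may vanish on a set of positive measure, $|\nabla w_0|^2/w_0$ may be singular at $t=0$, so one should argue on $[\tau,\infty)$ for some $\tau>0$ and use the exponential Gronwall factor (the "$+\int$"-term in \eqref{int-4.30}) to produce a bound $K_2$ that is eventually independent of $\tau$.
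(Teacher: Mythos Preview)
Your proposal is correct and follows essentially the same route as the paper's proof: derive the $\varepsilon$-analog of \eqref{ulnu+gradv/v-fin} (this is the paper's \eqref{int-4.3}), bound the boundary integral via convexity or via \eqref{boundary-esti-fin} (the paper's \eqref{bddary est}), and absorb the added energy term using \eqref{ulnu-GN} and \eqref{wgrad-est+} (the paper's \eqref{int-4.7}--\eqref{int-4.10}) before running Gronwall and invoking \eqref{abs}. Your remarks on the $\varepsilon$-independence of the trace constants and on the possible singularity of $|\nabla w_0|^2/w_0$ at $t=0$ are apt observations that the paper leaves implicit.
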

\begin{proof} Conducting similar computations leading to \eqref{ulnu+gradv/v-fin}, we calculate that
\be\label{int-4.3}\begin{split}
 & \frac{d}{dt} \int_\Omega \left(\alpha \chi_2u_{\varepsilon} \ln u_{\varepsilon}+ \beta \chi_1  v_{\varepsilon}\ln v_{\varepsilon}+\frac{\chi_1\chi_2}{2} \frac{|\nabla w_{\varepsilon}|^2} {w_{\varepsilon}} \right)\\
&\ \  +\int_\Omega \left(\alpha \chi_2u_{\varepsilon} \ln u_{\varepsilon}+ \beta \chi_1  v_{\varepsilon}\ln v_{\varepsilon}+\frac{\chi_1\chi_2}{2} \frac{|\nabla w_{\varepsilon}|^2} {w_{\varepsilon}} \right)\\
&\ \  +\alpha\chi_2\int_\Omega\frac{|\nabla u_{\varepsilon}|^2}{u_{\varepsilon}} + \beta \chi_1\int_\Omega\frac{|\nabla v_{\varepsilon}|^2}{v_{\varepsilon}}+\chi_1\chi_2\int_\Omega w_{\varepsilon}|D^2\ln w_{\varepsilon}|^2\\
&\  \  +\frac{\chi_1\chi_2}{2}\int_\Omega \left(\alpha F_\varepsilon(u_{\varepsilon})+\beta F_\varepsilon(v_{\varepsilon})\right)\frac{|\nabla w_{\varepsilon}|^2}{w_{\varepsilon}}\\
&=\int_\Omega \left(\alpha \chi_2u_{\varepsilon} \ln u_{\varepsilon}+ \beta \chi_1  v_{\varepsilon}\ln v_{\varepsilon}+\frac{\chi_1\chi_2}{2} \frac{|\nabla w_{\varepsilon}|^2} {w_{\varepsilon}} \right)+\frac{\chi_1\chi_2 }{2}\int_{\partial\Omega} \frac{1}{w} \frac{\partial |\nabla w|^2}{\partial\nu}.
\end{split}
\ee
By the $L^1$-boundedness of $u_\varepsilon$ and $v_\varepsilon$  in \eqref{reg-2.15}, an straightforward application of \eqref{ulnu-GN} shows that
\be\label{int-4.7}
 \alpha \chi_2 \int_\Omega u_{\varepsilon} \ln u_{\varepsilon}\leq \frac{ \alpha \chi_2}{2}\int_\Omega\frac{|\nabla u_{\varepsilon}|^2}{u_{\varepsilon}}+C_1(\|u_0\|_{L^1})
\ee
and
\be\label{int-4.8}
\beta \chi_1 \int_\Omega v_{\varepsilon}\ln v_{\varepsilon}\leq \frac{\beta \chi_1}{2} \int_\Omega\frac{|\nabla v_{\varepsilon}|^2}{v_{\varepsilon}}+C_2(\|v_0\|_{L^1}).
\ee
Also, thanks to \eqref{reg-2.17},   a simple use of \eqref{wgrad-est+}   entails
\be\label{int-4.10}
\frac{\chi_1\chi_2}{2} \int_\Omega   \frac{|\nabla w_{\varepsilon}|^2} {w_{\varepsilon}} \leq \frac{\chi_1\chi_2}{4}\int_\Omega w_{\varepsilon}|D^2\ln w_{\varepsilon}|^2+C_3(\|w_0\|_{L^1})
\ee
and, in the case that $\Omega$ is non-convex, an easy use of \eqref{boundary-esti-fin} shows that
 \be\label{bddary est}\begin{split}
&\frac{\chi_1\chi_2 }{2}\int_{\partial\Omega} \frac{1}{w_\varepsilon} \frac{\partial |\nabla w_\varepsilon|^2}{\partial\nu} \\
&\leq \begin{cases} 0, & \text{if } \Omega \text{ is convex}, \\[0.2cm]
 \frac{\chi_1\chi_2}{4} \int_\Omega w_{\varepsilon}|D^2\ln w_{\varepsilon}|^2+C_4(\|w_0\|_{L^1}), & \text{if } \Omega \text{ is non-convex}.
 \end{cases}
 \end{split}
 \ee
Substituting \eqref{int-4.7}, \eqref{int-4.8},\eqref{int-4.10} and \eqref{bddary est} into \eqref{int-4.3}, we derive \eqref{int-4.30}. Since $(u_\varepsilon, v_\varepsilon, w_\varepsilon)$ satisfies an ODI of the form \eqref{ulnu+gradv/v-odi}, and so \eqref{ulnu+gradv-int-eps} follows similarly as  \eqref{ulnu+gradv-int}.
\end{proof}
\subsection{$\varepsilon$-independent estimates for the regularized problem}
\begin{lemma}\label{lemma-4.2} There exists $K_3=K_3(u_0, v_0, w_0)>0$ such that the global solution of \eqref{reg-2.11} fulfills,   for  all $\varepsilon\in(0,1)$,
\be\label{int-4.16}
\begin{split}
&\int^\infty_0\int_\Omega\left(\frac{|\nabla u_{\varepsilon}|^2}{u_{\varepsilon}}+\frac{|\nabla v_{\varepsilon}|^2}{v_{\varepsilon}}+|D^2w_{\varepsilon}|^2+|\nabla w_{\varepsilon}|^4\right)\\
& \ \ +\int^\infty_0\int_\Omega\left(F_{\varepsilon}(u_{\varepsilon})|\nabla w_{\varepsilon}|^2+F_{\varepsilon}(v_{\varepsilon})|\nabla w_{\varepsilon}|^2\right)\leq K_3, \ \  \ \text{if } \Omega  \text{ is } convex.
\end{split}
\ee
 There exists $K_4=K_4(u_0, v_0, w_0)>0$ such that the global solution of \eqref{reg-2.11} fulfills, for any $\varepsilon\in(0,1)$ and $t\in[0, \infty)$,
\be\label{st-comp-est}
\begin{split}
&\int^{t+1}_t\int_\Omega\left(\frac{|\nabla u_{\varepsilon}|^2}{u_{\varepsilon}}+\frac{|\nabla v_{\varepsilon}|^2}{v_{\varepsilon}}+|D^2w_{\varepsilon}|^2+|\nabla w_{\varepsilon}|^4\right)\\
& +\int^{t+1}_t\int_\Omega\left(F_{\varepsilon}(u_{\varepsilon})|\nabla w_{\varepsilon}|^2+F_{\varepsilon}(v_{\varepsilon})|\nabla w_{\varepsilon}|^2\right)\leq K_4, \  \text{if } \Omega  \text{ is non-convex}.
\end{split}
\ee
\end{lemma}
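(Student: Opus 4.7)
The strategy is to integrate in time the raw energy-dissipation identity \eqref{int-4.3} already established in the proof of Lemma~\ref{lemma-4.1}, and then convert the naturally weighted dissipation into the unweighted integrands appearing in \eqref{int-4.16} and \eqref{st-comp-est}. Set
\[
E_\varepsilon(t) := \int_\Omega\!\left(\alpha\chi_2 u_\varepsilon\ln u_\varepsilon + \beta\chi_1 v_\varepsilon\ln v_\varepsilon + \tfrac{\chi_1\chi_2}{2}\tfrac{|\nabla w_\varepsilon|^2}{w_\varepsilon}\right),
\]
let $\mathcal D_\varepsilon(t)$ be the sum of the four dissipative integrals on the third and fourth lines of the left-hand side of \eqref{int-4.3}, and let $B_\varepsilon(t)$ denote the boundary integral on its right-hand side. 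Cancelling the common $E_\varepsilon$ contribution from both sides of \eqref{int-4.3} leaves the cleaner identity $\tfrac{d}{dt}E_\varepsilon + \mathcal D_\varepsilon = B_\varepsilon$, and the algebraic fact $z\ln z \geq -e^{-1}$ gives the pointwise lower bound $E_\varepsilon(t) \geq -C_0$ with $C_0 := (\alpha\chi_2 + \beta\chi_1)e^{-1}|\Omega|$.

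For the convex case \eqref{int-4.16}, one has $B_\varepsilon \leq 0$ since $\tfrac{\partial}{\partial\nu}|\nabla w_\varepsilon|^2 \leq 0$ on $\partial\Omega$. Integration of $\tfrac{d}{dt}E_\varepsilon + \mathcal D_\varepsilon \leq 0$ over $(0,\infty)$, together with the lower bound $E_\varepsilon \geq -C_0$, then yields $\int_0^\infty \mathcal D_\varepsilon\,dt \leq E_\varepsilon(0) + C_0$, which is uniform in $\varepsilon \in (0,1)$ because $E_\varepsilon(0)$ depends only on $(u_0,v_0,w_0)$. For the non-convex case \eqref{st-comp-est}, I will dominate $B_\varepsilon$ by the right-hand side of \eqref{boundary-esti-fin}, choosing the free parameter so as to absorb a small multiple of $\int_\Omega w_\varepsilon|D^2\ln w_\varepsilon|^2$ into $\mathcal D_\varepsilon$; this yields an ODI $\tfrac{d}{dt}E_\varepsilon + \widetilde{\mathcal D}_\varepsilon \leq C$ in which $\widetilde{\mathcal D}_\varepsilon$ has the same structure as $\mathcal D_\varepsilon$ but with reduced (still positive) coefficients. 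Integrating over $[t,t+1]$ and using $|E_\varepsilon(t)| \leq K_2 + C_0$ (obtained from \eqref{ulnu+gradv-int-eps} together with the lower bound) produces $\int_t^{t+1} \widetilde{\mathcal D}_\varepsilon \leq C + K_2 + 2C_0$ uniformly in $t \geq 0$ and $\varepsilon$.

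The last task is to replace the weighted integrands in $\mathcal D_\varepsilon$ (or $\widetilde{\mathcal D}_\varepsilon$) by the unweighted quantities $|D^2 w_\varepsilon|^2$, $|\nabla w_\varepsilon|^4$, $F_\varepsilon(u_\varepsilon)|\nabla w_\varepsilon|^2$ and $F_\varepsilon(v_\varepsilon)|\nabla w_\varepsilon|^2$ that appear in \eqref{int-4.16}. Using $w_\varepsilon \leq \|w_0\|_{L^\infty(\Omega)}$ together with the pointwise inequalities \eqref{d2v4-vd2lnv} and \eqref{d2v-vd2lnv}, we have, at every $t$,
\[
\int_\Omega |D^2 w_\varepsilon|^2 \leq \|w_0\|_{L^\infty}\!\int_\Omega \tfrac{|D^2 w_\varepsilon|^2}{w_\varepsilon} \leq C\!\int_\Omega w_\varepsilon|D^2\ln w_\varepsilon|^2,
\]
and analogously $\int_\Omega|\nabla w_\varepsilon|^4 \leq \|w_0\|_{L^\infty}^3\int_\Omega\tfrac{|\nabla w_\varepsilon|^4}{w_\varepsilon^3} \leq C\int_\Omega w_\varepsilon|D^2\ln w_\varepsilon|^2$, while $\int_\Omega F_\varepsilon(u_\varepsilon)|\nabla w_\varepsilon|^2 \leq \|w_0\|_{L^\infty}\int_\Omega F_\varepsilon(u_\varepsilon)\tfrac{|\nabla w_\varepsilon|^2}{w_\varepsilon}$ (and likewise for $v_\varepsilon$). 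Time-integrating these bounds against the dissipation controls from the previous paragraph yields \eqref{int-4.16} on $(0,\infty)$ and \eqref{st-comp-est} on $[t,t+1]$.

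The principal subtlety is technical rather than conceptual: the weighted-to-unweighted conversion requires strict positivity of $w_\varepsilon$ so that division by $w_\varepsilon$ is legitimate (supplied by the maximum principle for the regularized system), and one should read \eqref{d2v-vd2lnv} with the $w_\varepsilon$ weight also on its right-hand side in order to chain it with \eqref{d2v4-vd2lnv}. Once these points are in place, the proof reduces to bookkeeping: all the analytic work is already contained in the identity \eqref{int-4.3} and the boundary-term analysis \eqref{boundary-esti-fin} carried out in Section~2.
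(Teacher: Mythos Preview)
Your proposal is correct and follows essentially the same approach as the paper: cancel the common $E_\varepsilon$ from both sides of \eqref{int-4.3} to get $\tfrac{d}{dt}E_\varepsilon+\mathcal D_\varepsilon=B_\varepsilon$, integrate over $(0,\infty)$ (convex case, using $B_\varepsilon\le0$ and $E_\varepsilon\ge -C_0$) or over $[t,t+1]$ (non-convex case, after absorbing $B_\varepsilon$ via \eqref{bddary est} and invoking the uniform bound \eqref{ulnu+gradv-int-eps}), and finally convert the weighted dissipation to the unweighted integrands via $w_\varepsilon\le\|w_0\|_{L^\infty}$ together with \eqref{d2v4-vd2lnv}--\eqref{d2v-vd2lnv}. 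The paper carries this out in \eqref{int-st-step1}--\eqref{int-4.24} and \eqref{int-4.3-st-gap1}, with only cosmetic differences in presentation.
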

\begin{proof} In the case that  $\Omega$ is convex, integrating \eqref{int-4.3} with respect to $t\in(0,\infty)$ and using the fact that $ -z\ln z\leq e^{-1}$ for all $z>0$,  we have
\be\label{int-st-step1}\begin{split}
& \int_0^t \int_\Omega\left( \alpha\chi_2\frac{|\nabla u_{\varepsilon}|^2}{u_{\varepsilon}} + \beta \chi_1 \frac{|\nabla v_{\varepsilon}|^2}{v_{\varepsilon}}+\chi_1\chi_2  w_{\varepsilon}|D^2\ln w_{\varepsilon}|^2\right)\\
&\  +\frac{\chi_1\chi_2}{2}\int_0^t \int_\Omega \left(\alpha F_\varepsilon(u_{\varepsilon})+\beta F_\varepsilon(v_{\varepsilon})\right)\frac{|\nabla w_{\varepsilon}|^2}{w_{\varepsilon}}\\
&\leq \int_\Omega \left(\alpha \chi_2u_0 \ln u_0+ \beta \chi_1  v_0\ln v_0+\frac{\chi_1\chi_2}{2} \frac{|\nabla w_0|^2} {w_0} \right)\\
&\ \  -\int_\Omega \left(\alpha \chi_2u_{\varepsilon} \ln u_{\varepsilon}+ \beta \chi_1  v_{\varepsilon}\ln v_{\varepsilon}\right)\\
&\leq \int_\Omega \left(\alpha \chi_2u_0 \ln u_0+ \beta \chi_1  v_0\ln v_0+\frac{\chi_1\chi_2}{2} \frac{|\nabla w_0|^2} {w_0} \right)+ \left(\alpha \chi_2 +\beta \chi_1\right)e^{-1}|\Omega|.
\end{split}
\ee
In light of \eqref{d2v4-vd2lnv} and \eqref{d2v-vd2lnv} and the fact  $w_{\varepsilon}\leq \|w_0\|_{L^{\infty}}$, we infer that
\be\label{int-4.23}
\frac{1}{\|w_0\|_{L^{\infty}}^3}\int_0^t\int_\Omega |\nabla w_{\varepsilon}|^4\leq \int_0^t\int_\Omega\frac{|\nabla w_{\varepsilon}|^4} {w^3_{\varepsilon}}\leq (2+\sqrt{n})^2\int_0^t\int_\Omega w_{\varepsilon}|D^2\ln w_{\varepsilon}|^2
\ee
and
\be\label{int-4.24}
\frac{1}{\|w_0\|_{L^{\infty}}}\int^t_0\int_\Omega |D^2w_{\varepsilon}|^2\leq2\left[(2+\sqrt{n})^2+1\right]\int^t_0\int_\Omega w_{\varepsilon}|D^2\ln w_{\varepsilon}|^2.
\ee
Combining \eqref{int-4.23} and \eqref{int-4.24} into \eqref{int-st-step1} and sending $t\rightarrow\infty$, we derive \eqref{int-4.16}.

Similarly, when $\Omega$ is nonconvex, substituting  \eqref{bddary est} into \eqref{int-4.3}, we obtain that
\be\label{int-4.3-st-gap1}\begin{split}
 & \frac{d}{dt} \int_\Omega \left(\alpha \chi_2u_{\varepsilon} \ln u_{\varepsilon}+ \beta \chi_1  v_{\varepsilon}\ln v_{\varepsilon}+\frac{\chi_1\chi_2}{2} \frac{|\nabla w_{\varepsilon}|^2} {w_{\varepsilon}} \right)\\
&\ \  +\alpha\chi_2\int_\Omega\frac{|\nabla u_{\varepsilon}|^2}{u_{\varepsilon}} + \beta \chi_1\int_\Omega\frac{|\nabla v_{\varepsilon}|^2}{v_{\varepsilon}}+\frac{3\chi_1\chi_2 }{4}\int_\Omega w_{\varepsilon}|D^2\ln w_{\varepsilon}|^2\\
&\  \  +\frac{\chi_1\chi_2}{2}\int_\Omega \left(\alpha F_\varepsilon(u_{\varepsilon})+\beta F_\varepsilon(v_{\varepsilon})\right)\frac{|\nabla w_{\varepsilon}|^2}{w_{\varepsilon}}\\
&\leq C_4(\|w_0\|_{L^1}).
\end{split}
\ee
Integrating \eqref{int-4.3-st-gap1} from $t$ to $t+1$ and using \eqref{ulnu+gradv-int-eps}, we achieve  \eqref{st-comp-est}.
\end{proof}

\begin{lemma}\label{lemma-4.3} There exists  $K_5=K_5(u_0, v_0, w_0)>0$  such that the global solution of \eqref{reg-2.11} fulfills, for all  $\varepsilon\in(0,1)$ and for any $t\in[0, \infty)$,
\be\label{int-4.29}
\int_t^{t+1}\int_\Omega\left(u_{\varepsilon}^{\frac{n+2}{n}}+v_{\varepsilon}^{\frac{n+2}{n}}+|\nabla u_{\varepsilon}|^{\frac{n+2}{n+1}}+|\nabla v_{\varepsilon}|^{\frac{n+2}{n+1}}\right)\leq K_5.
\ee
\end{lemma}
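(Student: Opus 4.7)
The plan is to combine the spatio-temporal dissipation estimate $\int_t^{t+1}\int_\Omega \frac{|\nabla u_\varepsilon|^2}{u_\varepsilon}\leq K$ (and the same for $v_\varepsilon$) provided by Lemma 4.2 (in the convex case this comes from \eqref{int-4.16}, and in the non-convex case directly from \eqref{st-comp-est}) together with the mass conservation $\|u_\varepsilon(\cdot,t)\|_{L^1}=\|u_0\|_{L^1}$ from Lemma 2.5 via the Gagliardo--Nirenberg inequality of Lemma 2.1. The trick is the substitution $\psi_\varepsilon:=\sqrt{u_\varepsilon}$, which converts the dissipation into the familiar $L^2$-norm of a gradient, namely $\int_\Omega|\nabla\psi_\varepsilon|^2=\tfrac14\int_\Omega\frac{|\nabla u_\varepsilon|^2}{u_\varepsilon}$, while the mass bound becomes $\|\psi_\varepsilon\|_{L^2}=\|u_0\|_{L^1}^{1/2}$.

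First I would establish the $L^{(n+2)/n}$-bound. Applying Lemma 2.1 to $\psi_\varepsilon$ with $p=\frac{2(n+2)}{n}$ and $q=2$ yields the interpolation exponent $\delta=\frac{n}{n+2}$, and the identity $p\delta=2$ produces
\[
\int_\Omega u_\varepsilon^{\frac{n+2}{n}}=\|\psi_\varepsilon\|_{L^p}^{p}\leq C\bigl(\|\nabla\psi_\varepsilon\|_{L^2}^{2}\|\psi_\varepsilon\|_{L^2}^{p(1-\delta)}+\|\psi_\varepsilon\|_{L^2}^{p}\bigr)\leq C_1\int_\Omega\frac{|\nabla u_\varepsilon|^2}{u_\varepsilon}+C_2,
\]
where $C_1,C_2$ depend only on $\|u_0\|_{L^1}$ and $|\Omega|$. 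Integrating in time over $(t,t+1)$ and invoking the dissipation estimate from Lemma 4.2 yields $\int_t^{t+1}\int_\Omega u_\varepsilon^{(n+2)/n}\leq K$; the same argument applies verbatim to $v_\varepsilon$.

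Next I would pass from this to the gradient bound by the elementary factorization
\[
|\nabla u_\varepsilon|^{\frac{n+2}{n+1}}=\Bigl(\frac{|\nabla u_\varepsilon|^2}{u_\varepsilon}\Bigr)^{\frac{n+2}{2(n+1)}}\,u_\varepsilon^{\frac{n+2}{2(n+1)}},
\]
and then applying H\"older's inequality in space--time with conjugate exponents $\frac{2(n+1)}{n+2}$ and $\frac{2(n+1)}{n}$; the second exponent matches exactly so that $\frac{n+2}{2(n+1)}\cdot\frac{2(n+1)}{n}=\frac{n+2}{n}$, giving
\[
\int_t^{t+1}\!\!\int_\Omega|\nabla u_\varepsilon|^{\frac{n+2}{n+1}}\leq\Bigl(\int_t^{t+1}\!\!\int_\Omega\frac{|\nabla u_\varepsilon|^2}{u_\varepsilon}\Bigr)^{\!\frac{n+2}{2(n+1)}}\!\Bigl(\int_t^{t+1}\!\!\int_\Omega u_\varepsilon^{\frac{n+2}{n}}\Bigr)^{\!\frac{n}{2(n+1)}},
\]
and both factors on the right are controlled by what was just obtained. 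Summing over $u_\varepsilon$ and $v_\varepsilon$ produces \eqref{int-4.29} with a constant depending only on $u_0,v_0,w_0$.

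The argument is essentially routine once Lemma 4.2 is in hand; the only subtle point is to book-keep the exponents so that the GN interpolation gives the clean identity $p\delta=2$, which is what makes the time integration absorb exactly the dissipation estimate. I do not anticipate a significant obstacle, but I would double-check that $\delta\in(0,1)$ and that $(n+2)/(n+1)<2$ in dimensions $n=3,4,5$ (so the H\"older step is meaningful), both of which hold trivially.
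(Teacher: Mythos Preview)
Your proof is correct and follows essentially the same route as the paper: the paper applies the Gagliardo--Nirenberg inequality to $\sqrt{u_\varepsilon}$ with exactly your exponents to obtain \eqref{int-4.41}, and then handles the gradient term via the same factorization $|\nabla u_\varepsilon|^{\frac{n+2}{n+1}}=\bigl(\tfrac{|\nabla u_\varepsilon|^2}{u_\varepsilon}\bigr)^{\frac{n+2}{2(n+1)}}u_\varepsilon^{\frac{n+2}{2(n+1)}}$. The only cosmetic difference is that the paper uses Young's inequality pointwise (yielding the additive bound $\int_\Omega\tfrac{|\nabla u_\varepsilon|^2}{u_\varepsilon}+\int_\Omega u_\varepsilon^{(n+2)/n}$) rather than your space--time H\"older step, but the two are interchangeable here.
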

\begin{proof}
Given the mass conservation of $u_\varepsilon$ in \eqref{reg-2.15},  the Gagliardo-Nirenberg  inequality (cf. Lemma \ref{GN-inter}) allows us to deduce that
\be\label{int-4.41}\begin{split}
 \int_\Omega u_{\varepsilon}^{\frac{n+2}{n}}= \|\sqrt{u_{\varepsilon}}\|^{\frac{2(n+2)}{n}}_{L^\frac{2(n+2)}{n}}&\leq C_1 \|\nabla\sqrt{u_{\varepsilon}}\|^2_{L^2}\|\sqrt{u_{\varepsilon}}\|^\frac{4}{n}_{L^2} +C_1\|\sqrt{u_{\varepsilon}}\|^{\frac{2(n+2)}{n}}_{L^2}\\
&=\frac{C_1}{4}\|u_0\|_{L^1}^\frac{2}{n}\int_\Omega \frac{|\nabla u_{\varepsilon}|^2}{u_{\varepsilon}}+C_1\|u_0\|_{L^1}^\frac{n+2}{n}.
\end{split}
\ee
Likewise, one also that
\be\label{int-4.42}
\int_\Omega v_{\varepsilon}^{\frac{n+2}{n}}\leq C_2\|v_0\|_{L^1}^\frac{2}{n}\int_\Omega \frac{|\nabla v_{\varepsilon}|^2}{v_{\varepsilon}}+C_2\|v_0\|_{L^1}^\frac{n+2}{n}.
\ee
Applying the Young inequality, we obtain
\be\label{int-4.43}
 \int_\Omega |\nabla u_{\varepsilon}|^{\frac{n+2}{n+1}}= \int_\Omega\left(\frac{|\nabla u_{\varepsilon}|^2}{u_{\varepsilon}}\right)^{\frac{n+2}{2(n+1)}} \cdot u_{\varepsilon}^{\frac{n+2}{2(n+1)}}\leq  \int_\Omega\frac{|\nabla u_{\varepsilon}|^2}{u_{\varepsilon}}+ \int_\Omega u_{\varepsilon}^\frac{n+2}{n}.
\ee
Similarly,
\be\label{int-4.44}
\int_\Omega |\nabla v_{\varepsilon}|^{\frac{n+2}{n+1}}\leq  \int_\Omega\frac{|\nabla v_{\varepsilon}|^2}{v_{\varepsilon}}+ \int_\Omega v_{\varepsilon}^\frac{n+2}{n}.
\ee
For any $t\geq 0$, integrating \eqref{int-4.41}, \eqref{int-4.42}, \eqref{int-4.43} and \eqref{int-4.44} from $t$ to $t+1$, then   using \eqref{int-4.16} or \eqref{st-comp-est}, we readily conclude \eqref{int-4.29}.
\end{proof}

\begin{lemma}\label{lemma-4.4} There exists  $K_6=K_6(u_0, v_0, w_0)>0$ such that for all $\varepsilon\in(0,1)$, the global solution of \eqref{reg-2.11} fulfills
\be\label{int-4.45}
\int^\infty_0\left(\|u_{\varepsilon}-\bar{u}_0\|^2_{L^\frac{n}{n-1}}
+\|v_{\varepsilon}-\bar{v}_0\|^2_{L^\frac{n}{n-1}}\right)\leq K_6, \ \   \text{if } \Omega  \text{ is convex};
\ee
and, there exists  $K_7=K_7(u_0, v_0, w_0)>0$ such that, for any  $\varepsilon\in(0,1)$ and $t\geq 0$,
\be\label{int-4.46}
\int_t^{t+1}\left(\|u_{\varepsilon}-\bar{u}_0\|^2_{L^\frac{n}{n-1}}
+|v_{\varepsilon}-\bar{v}_0\|^2_{L^\frac{n}{n-1}}\right)\leq K_7, \ \   \text{if } \Omega  \text{ is non-convex}.
\ee
\end{lemma}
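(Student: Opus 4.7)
The plan is to combine a Sobolev--Poincar\'e inequality for mean-zero functions with a Cauchy--Schwarz estimate that produces exactly the dissipative quantity $\int|\nabla u_\varepsilon|^2/u_\varepsilon$ whose space-time integrability is already granted by Lemma \ref{lemma-4.2}. Since mass conservation \eqref{reg-2.15} gives $\overline{u_\varepsilon}(t)=\bar u_0$ and $\overline{v_\varepsilon}(t)=\bar v_0$, the functions $u_\varepsilon-\bar u_0$ and $v_\varepsilon-\bar v_0$ are mean-zero, and this is the key structural input that lets us bypass any $L^p$-bound on $u_\varepsilon,v_\varepsilon$ themselves.

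First I would invoke the Sobolev--Poincar\'e inequality in the $W^{1,1}\hookrightarrow L^{n/(n-1)}$ endpoint form: on the bounded smooth domain $\Omega\subset\mathbb R^n$, there exists $C_{SP}>0$ such that
\begin{equation*}
\|u_\varepsilon-\bar u_0\|_{L^{n/(n-1)}(\Omega)}\leq C_{SP}\|\nabla u_\varepsilon\|_{L^1(\Omega)}.
\end{equation*}
Next, by Cauchy--Schwarz and the mass conservation \eqref{reg-2.15},
\begin{equation*}
\|\nabla u_\varepsilon\|_{L^1(\Omega)}=\int_\Omega \frac{|\nabla u_\varepsilon|}{\sqrt{u_\varepsilon}}\cdot\sqrt{u_\varepsilon}\leq \left(\int_\Omega\frac{|\nabla u_\varepsilon|^2}{u_\varepsilon}\right)^{1/2}\|u_0\|_{L^1(\Omega)}^{1/2}.
\end{equation*}
Squaring and combining the two lines produces the pointwise-in-time inequality
\begin{equation*}
\|u_\varepsilon(\cdot,t)-\bar u_0\|_{L^{n/(n-1)}(\Omega)}^2\leq C_{SP}^2\|u_0\|_{L^1(\Omega)}\int_\Omega\frac{|\nabla u_\varepsilon(\cdot,t)|^2}{u_\varepsilon(\cdot,t)},
\end{equation*}
and the analogous inequality for $v_\varepsilon-\bar v_0$ with $v_0$ in place of $u_0$.

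To finish, I would simply integrate these two pointwise inequalities in time. In the convex case, integration over $(0,\infty)$ together with the global bound \eqref{int-4.16} yields \eqref{int-4.45} with $K_6:=C_{SP}^2(\|u_0\|_{L^1}+\|v_0\|_{L^1})K_3$. In the non-convex case, integration over an arbitrary unit interval $(t,t+1)$ together with the uniform-in-$t$ estimate \eqref{st-comp-est} yields \eqref{int-4.46} with $K_7$ defined analogously via $K_4$. No obstacle of substance is expected: the only point requiring some care is the correct identification of the Sobolev--Poincar\'e embedding at the $W^{1,1}$--$L^{n/(n-1)}$ scale on a bounded Lipschitz domain for mean-zero functions, which is classical; everything else is a direct application of already established estimates.
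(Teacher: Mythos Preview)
Your proposal is correct and follows essentially the same approach as the paper: the paper likewise combines the Cauchy--Schwarz estimate $\bigl(\int_\Omega|\nabla u_\varepsilon|\bigr)^2\leq\|u_0\|_{L^1}\int_\Omega|\nabla u_\varepsilon|^2/u_\varepsilon$ with the Sobolev embedding $W^{1,1}\hookrightarrow L^{n/(n-1)}$ plus Poincar\'e (i.e., your Sobolev--Poincar\'e inequality), and then integrates in time using \eqref{int-4.16} or \eqref{st-comp-est}. The only cosmetic difference is the order in which the two ingredients are applied.
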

\begin{proof}
The Cauchy-Schwarz inequality entails that
$$
\left(\int_\Omega|\nabla u_{\varepsilon}|\right)^2+\left(\int_\Omega|\nabla v_{\varepsilon}|\right)^2\leq\|u_0\|_{L^1}\int_\Omega\frac{|\nabla u_{\varepsilon}|^2}{u_{\varepsilon}}+\|v_0\|_{L^1}\int_\Omega\frac{|\nabla v_{\varepsilon}|^2}{v_{\varepsilon}}.
$$
 Notice from \eqref{reg-2.15} that $\bar{u}_\varepsilon=\bar{u}_0$ and $\bar{v}_\varepsilon=\bar{v}_0$; we then  we infer from the Sobolev embedding $W^{1,1}(\Omega)\hookrightarrow L^\frac{n}{n-1}(\Omega)$ and  the Poincare inequality  that
 \begin{equation*}
 \begin{split}
\left(\|u_{\varepsilon}-\bar{u}_0\|^2_{L^\frac{n}{n-1}}+\|v_{\varepsilon}
-\bar{v}_0\|^2_{L^\frac{n}{n-1}}\right)\leq C\left(\int_\Omega|\nabla u_{\varepsilon}|\right)^2+C\left(\int_\Omega|\nabla v_{\varepsilon}|\right)^2.
\end{split}
\end{equation*}
Integrating those inequalities from $0$ to $t$ (if $\Omega$ is convex) or from $t$ and $t+1$ (if $\Omega$ is non-convex) and making use of \eqref{int-4.16} or \eqref{st-comp-est}, we readily infer \eqref{int-4.45} and \eqref{int-4.46}.
\end{proof}

In the sequel,  for our subsequent compactness argument, we study  the space-time regularity of the time derivatives  of solutions to the regularized  system \eqref{reg-2.11}.

\begin{lemma}\label{lemma-4.5} There exists  $K_8=K_8(u_0, v_0, w_0)>0$ such that for all $\varepsilon\in(0,1)$, the global solution of \eqref{reg-2.11} fulfills
\be\label{int-4.49}
\int^\infty_0 \int_\Omega w^2_{\varepsilon t}\leq K_8,\ \ \   \text{if  } \Omega  \text{ is convex};
\ee
and, there exists  $K_9=K_9(u_0, v_0, w_0)>0$ such that, for any  $\varepsilon\in(0,1)$ and $t\geq0$,
\be\label{int-4.490}
\int_t^{t+1} \int_\Omega w^2_{\varepsilon t}\leq K_9, \ \ \   \text{if  } \Omega  \text{ is non-convex}.
\ee
\end{lemma}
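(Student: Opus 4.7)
The natural approach is to test the third equation of \eqref{reg-2.11} against $w_{\varepsilon t}$ and integrate over $\Omega$. Setting $h_\varepsilon:=\alpha F_\varepsilon(u_\varepsilon)+\beta F_\varepsilon(v_\varepsilon)$, the integration by parts of $\int_\Omega w_{\varepsilon t}\Delta w_\varepsilon$ produces no boundary contribution thanks to $\partial_\nu w_\varepsilon=0$, leaving the pointwise-in-time identity
\[
\int_\Omega w_{\varepsilon t}^2+\frac{1}{2}\frac{d}{dt}\int_\Omega|\nabla w_\varepsilon|^2=-\int_\Omega h_\varepsilon w_\varepsilon w_{\varepsilon t}.
\]
Young's inequality on the right-hand side absorbs $\tfrac{1}{2}\int_\Omega w_{\varepsilon t}^2$ into the left-hand side, and combined with the bound $w_\varepsilon\leq\|w_0\|_{L^\infty}$ from \eqref{reg-2.17}, this reduces the problem to a time-integrated estimate on the reaction-type quantity $\int_\Omega h_\varepsilon^2 w_\varepsilon$.

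To control this quantity, I would exploit the pointwise inequality $F_\varepsilon(s)^2\leq s F_\varepsilon(s)$, which follows from $F_\varepsilon(s)\leq s$ in \eqref{reg-2.9}. Together with $w_\varepsilon\in L^\infty$, this dominates $h_\varepsilon^2 w_\varepsilon$ by a constant multiple of $u_\varepsilon F_\varepsilon(u_\varepsilon)+v_\varepsilon F_\varepsilon(v_\varepsilon)$. The space-time integrability of the resulting expression would be drawn from a H\"older-type interpolation combining the $L^{(n+2)/n}_{t,x}$-bound on $u_\varepsilon,v_\varepsilon$ from Lemma \ref{lemma-4.3} with the uniform bound $\int_0^\infty\!\!\int_\Omega F_\varepsilon(u_\varepsilon)|\nabla w_\varepsilon|^2\leq K_3$ from Lemma \ref{lemma-4.2}; the multiplicative factor $w_\varepsilon$ together with the $L^\infty$-control \eqref{reg-2.17} and the uniform $L^1$-mass bound \eqref{reg-2.15} supply the additional integrability needed to close the estimate.

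The two claims then split cleanly depending on whether the upstream estimates are globally or locally integrable in time. For the convex case, I would integrate the ODI from $0$ to $\infty$; the uniform control of $\int_\Omega|\nabla w_\varepsilon|^2$ from \eqref{ulnu+gradv-int-eps} together with the global space-time bounds in \eqref{int-4.16} yields \eqref{int-4.49}. For the non-convex case, the corresponding upstream estimates \eqref{st-comp-est} only hold locally over unit intervals $[t,t+1]$, so integrating the same ODI over such an interval produces the local bound \eqref{int-4.490}. The principal obstacle is precisely the reaction term $\int_\Omega h_\varepsilon^2 w_\varepsilon^2$: since $(n+2)/n<2$ for $n\geq 3$, the crude bound $h_\varepsilon\leq\alpha u_\varepsilon+\beta v_\varepsilon$ does not place $h_\varepsilon$ in $L^2_{t,x}$, and the control of $(h_\varepsilon w_\varepsilon)^2$ must be engineered by delicately exploiting the factor $w_\varepsilon\in L^\infty$ in conjunction with the finer bounds collected in Lemmas \ref{lemma-4.1}--\ref{lemma-4.3}.
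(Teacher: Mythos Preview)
Your starting identity is correct, but the proof breaks down precisely at what you call ``the principal obstacle,'' and the gap is not recoverable along the lines you sketch. After Young's inequality you are left needing an $\varepsilon$-uniform bound on
\[
\int_0^\infty\!\!\int_\Omega h_\varepsilon^2 w_\varepsilon^2
\qquad\text{(convex case), or on }\int_t^{t+1}\!\!\int_\Omega h_\varepsilon^2 w_\varepsilon^2.
\]
Even after using $F_\varepsilon(s)^2\leq sF_\varepsilon(s)$ and $w_\varepsilon\leq\|w_0\|_{L^\infty}$, this reduces to controlling $\int\!\!\int u_\varepsilon F_\varepsilon(u_\varepsilon)$, which behaves like $\int\!\!\int u_\varepsilon^2$ as $\varepsilon\to 0$. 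None of the available estimates provide this: Lemma~\ref{lemma-4.3} gives only $L^{(n+2)/n}_{t,x}$ with $(n+2)/n<2$, the mass bound \eqref{reg-2.15} is $L^\infty_t L^1_x$, and the dissipation terms in Lemma~\ref{lemma-4.2} involve either $|\nabla u_\varepsilon|^2/u_\varepsilon$ or $F_\varepsilon(u_\varepsilon)|\nabla w_\varepsilon|^2$, neither of which interpolates with the former to yield $L^2_{t,x}$ control of $u_\varepsilon$. The ``multiplicative factor $w_\varepsilon$'' cannot rescue this: it is merely bounded, and the global-in-time bound \eqref{reg-2.18} on $\int\!\!\int F_\varepsilon(u_\varepsilon)w_\varepsilon$ is only $L^1_{t,x}$, one power short of what you need.

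The paper avoids this obstruction by \emph{not} applying Young's inequality to $-\int_\Omega h_\varepsilon w_\varepsilon w_{\varepsilon t}$. Instead it writes $2h_\varepsilon w_\varepsilon w_{\varepsilon t}=h_\varepsilon(w_\varepsilon^2)_t$ and integrates by parts in time:
\[
-\int_\Omega h_\varepsilon(w_\varepsilon^2)_t
=-\frac{d}{dt}\int_\Omega h_\varepsilon w_\varepsilon^2
+\int_\Omega\bigl(\alpha F_\varepsilon'(u_\varepsilon)u_{\varepsilon t}+\beta F_\varepsilon'(v_\varepsilon)v_{\varepsilon t}\bigr)w_\varepsilon^2,
\]
then substitutes the first two equations of \eqref{reg-2.11} for $u_{\varepsilon t},v_{\varepsilon t}$ and integrates by parts in space. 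All resulting terms are pointwise dominated (using $|sF_\varepsilon''(s)|\leq 2$, $sF_\varepsilon'(s)^2\leq F_\varepsilon(s)$, etc.) by multiples of $\frac{|\nabla u_\varepsilon|^2}{u_\varepsilon}$, $\frac{|\nabla v_\varepsilon|^2}{v_\varepsilon}$, $F_\varepsilon(u_\varepsilon)|\nabla w_\varepsilon|^2$, $F_\varepsilon(v_\varepsilon)|\nabla w_\varepsilon|^2$, which are exactly the quantities controlled in Lemma~\ref{lemma-4.2}. The extra total-derivative term $\frac{d}{dt}\int_\Omega h_\varepsilon w_\varepsilon^2$ is harmless since $\int_\Omega h_\varepsilon w_\varepsilon^2\leq\|w_0\|_{L^\infty}^2(\alpha\|u_0\|_{L^1}+\beta\|v_0\|_{L^1})$ uniformly. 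This manoeuvre---trading a time derivative on $w_\varepsilon^2$ for spatial derivatives via the $u$- and $v$-equations---is the missing idea in your proposal.
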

\begin{proof} Multiplying the third equation in \eqref{reg-2.11} by $2w_{\varepsilon t}$ and then integrating over $\Omega$ by parts, we obtain that
\begin{equation*}\begin{split}
&2\int_\Omega w^2_{\varepsilon t}+\frac{d}{dt}\int_\Omega|\nabla w_{\varepsilon}|^2\\
&\ =- \alpha \int_\Omega F_{\varepsilon}(u_{\varepsilon})(w_{\varepsilon}^2)_t- \beta \int_\Omega F_{\varepsilon}(v_{\varepsilon})(w_{\varepsilon}^2)_t\\
&\ =- \alpha \frac{d}{dt}\int_\Omega F_{\varepsilon}(u_{\varepsilon}) w_{\varepsilon}^2+ \alpha \int_\Omega F'_{\varepsilon}(u_{\varepsilon})w_{\varepsilon}^2u_{\varepsilon t}\\
 & \ - \beta \frac{d}{dt}\int_\Omega F_{\varepsilon}(v_{\varepsilon}) w_{\varepsilon}^2+ \beta \int_\Omega F'_{\varepsilon}(v_{\varepsilon})w_{\varepsilon}^2v_{\varepsilon t};
\end{split}
\end{equation*}
that is,
\be\label{int-4.50}
\begin{split}
&2\int_\Omega w^2_{\varepsilon t}+\frac{d}{dt}\int_\Omega\left(|\nabla w_{\varepsilon}|^2+\alpha F_{\varepsilon}(u_{\varepsilon}) w_{\varepsilon}^2+\beta F_{\varepsilon}(v_{\varepsilon}) w_{\varepsilon}^2\right)\\
&\ \ = \alpha \int_\Omega F'_{\varepsilon}(u_{\varepsilon})w_{\varepsilon}^2u_{\varepsilon t}+ \beta \int_\Omega F'_{\varepsilon}(v_{\varepsilon})w_{\varepsilon}^2v_{\varepsilon t}.
\end{split}
\ee
Using the first equation in \eqref{reg-2.11} and integrating by parts, we get
\be\label{int-4.51}\begin{split}
&\int_\Omega F'_{\varepsilon}(u_{\varepsilon})w_{\varepsilon}^2u_{\varepsilon t}\\
&\ =-\int_\Omega F''_{\varepsilon}(u_{\varepsilon})w_{\varepsilon}^2|\nabla u_{\varepsilon}|^2-2\int_\Omega F'_{\varepsilon}(u_{\varepsilon}) w_{\varepsilon}\nabla u_{\varepsilon}\cdot\nabla w_{\varepsilon}\\
 &\ +\chi_1\int_\Omega F'_{\varepsilon}(u_{\varepsilon})F''_{\varepsilon}(u_{\varepsilon})u_{\varepsilon} w_{\varepsilon}^2\nabla u_{\varepsilon}\cdot\nabla w_{\varepsilon} +2\chi_1\int_\Omega \left(F'_{\varepsilon}(u_{\varepsilon})\right)^2u_{\varepsilon}w_{\varepsilon}|\nabla w_{\varepsilon}|^2\\
&\ =:H_1+H_2+H_3+H_4.
\end{split}
\ee
 Since  $0\leq -sF''_{\varepsilon}(s)\leq2$ and $w_{\varepsilon}\leq \|w_0\|_{L^\infty}$  due to \eqref{reg-2.10} and   \ref{reg-2.17}, we   estimate
\be\label{int-4.52}
H_1\leq\|w_0\|_{L^\infty}^2\int_\Omega u_{\varepsilon}|F''_{\varepsilon}(u_{\varepsilon})|\cdot\frac{|\nabla u_{\varepsilon}|^2}{u_{\varepsilon}}\leq 2\|w_0\|_{L^\infty}^2\int_\Omega\frac{|\nabla u_{\varepsilon}|^2}{u_{\varepsilon}}.
\ee
Similarly, by Young's inequality, we estimate $H_2$ as follows:
\be\label{int-4.53}\begin{split}
H_2&\leq\int_\Omega F_{\varepsilon}(u_{\varepsilon})|\nabla w_{\varepsilon}|^2+\|w_0\|_{L^\infty}^2\int_\Omega\frac{u_{\varepsilon} \left(F'_{\varepsilon}(u_{\varepsilon})\right)^2}{F_{\varepsilon}(u_{\varepsilon})}\cdot\frac{|\nabla u_{\varepsilon}|^2}{u_{\varepsilon}}\\
&\leq\int_\Omega F_{\varepsilon}(u_{\varepsilon})|\nabla w_{\varepsilon}|^2+\|w_0\|_{L^\infty}^2\int_\Omega\frac{|\nabla u_{\varepsilon}|^2}{u_{\varepsilon}},
\end{split}
\ee
where we used  the following fact due to the definition of $F_{\varepsilon}$ in  \eqref{reg-2.7}:
\be\label{aid1}
0\leq \frac{s\left(F'_{\varepsilon}(s)\right)^2}{F_\varepsilon(s)}=\begin{cases}
 \frac{\varepsilon s}{(1+\varepsilon s)^2\ln(1+\varepsilon s)},    &\text{if  } n=3, \\[0.2cm]
  \frac{1}{(1+\varepsilon s)^3},    &\text{if  } n=4, 5
 \end{cases}\leq 1.
\ee
Analogously,  the term $H_3$ is bounded according to
\be\label{int-4.54}\begin{split}
H_3&\leq\|w_0\|_{L^\infty}^2\int_\Omega F_{\varepsilon}(u_{\varepsilon})|\nabla w_{\varepsilon}|^2\\
&\ \ \ \ +\frac{\chi^2_1\|w_0\|_{L^\infty}^2}{4}\int_\Omega\frac{u^3_{\varepsilon} \left(F'_\varepsilon(u_\varepsilon) F''_{\varepsilon}(u_{\varepsilon})\right)^2}{F_{\varepsilon}(u_{\varepsilon})}\cdot\frac{|\nabla u_{\varepsilon}|^2}{u_{\varepsilon}}\\
&\leq\|w_0\|_{L^\infty}^2\int_\Omega F_{\varepsilon}(u_{\varepsilon})|\nabla w_{\varepsilon}|^2+\chi^2_1\|w_0\|_{L^\infty}^2\int_\Omega\frac{|\nabla u_{\varepsilon}|^2}{u_{\varepsilon}},
\end{split}
\ee
where we used  the following fact due to the definition of $F_{\varepsilon}$ in  \eqref{reg-2.7}:
$$
0\leq \frac{s^3\left(F'_{\varepsilon}(s)F''_{\varepsilon}(s)\right)^2}{F_\varepsilon(s)}=\begin{cases}
 \frac{(\varepsilon s)^3}{(1+\varepsilon s)^6\ln(1+\varepsilon s)},    &\text{if  } n=3, \\[0.2cm]
  \frac{4\varepsilon s}{(1+\varepsilon s)^9},    &\text{if  } n=4, 5
 \end{cases}\leq 4.
$$
Finally, we use \eqref{aid1} to bound   $H_4$ as
\be\label{int-4.55}
\begin{split}
H_4&\leq 2\chi_1\|w_0\|_{L^\infty}\int_\Omega\frac{u_{\varepsilon} \left(F'_{\varepsilon}(u_{\varepsilon})\right)^2 }{F_{\varepsilon}(u_{\varepsilon})}\cdot F_{\varepsilon}(u_{\varepsilon})|\nabla w_{\varepsilon}|^2\\
&\ \leq 2\chi_1\|w_0\|_{L^\infty}\int_\Omega F_{\varepsilon}(u_{\varepsilon})|\nabla w_{\varepsilon}|^2.
\end{split}
\ee
Collecting \eqref{int-4.51},\eqref{int-4.52}, \eqref{int-4.53}, \eqref{int-4.54} and \eqref{int-4.55}, we obtain that
\be\label{int-4.56}
\begin{split}
\alpha\int_\Omega F'_{\varepsilon}(u_{\varepsilon})w_{\varepsilon}^2u_{\varepsilon t}&\leq\left(3+\chi_1^2\right)\alpha\|w_0\|_{L^\infty}^2\int_\Omega\frac{|\nabla u_{\varepsilon}|^2}{u_{\varepsilon}}\\
&\ \ \ +\left(1+2\chi_1\|w_0\|_{L^\infty}+\|w_0\|_{L^\infty}^2\right)\alpha\int_\Omega F_{\varepsilon}(u_{\varepsilon})|\nabla w_{\varepsilon}|^2.
\end{split}
\ee
In a similar manner, one can show that
\be\label{int-4.57}\begin{split}
\beta\int_\Omega F'_{\varepsilon}(v_{\varepsilon})w_{\varepsilon}^2v_{\varepsilon t}&\leq\left(3+\chi_2^2\right)\beta\|w_0\|_{L^\infty}^2\int_\Omega\frac{|\nabla v_{\varepsilon}|^2}{v_{\varepsilon}}\\
&\ \ \ +\left(1+2\chi_2\|w_0\|_{L^\infty}+\|w_0\|_{L^\infty}^2\right)\beta\int_\Omega F_{\varepsilon}(v_{\varepsilon})|\nabla w_{\varepsilon}|^2.
\end{split}
\ee
For any $0\leq s\leq t$,  integrating \eqref{int-4.50} from $s$ to $t$  and combining \eqref{int-4.56} with \eqref{int-4.57}, we end up with
\be\label{int-4.58}\begin{split}
2\int^t_s\int_\Omega w^2_{\varepsilon t}&\leq\int_\Omega\left(|\nabla w_\varepsilon|^2+\alpha F_{\varepsilon}(u_\varepsilon) w_\varepsilon^2+\beta F_{\varepsilon}(v_\varepsilon) w_\varepsilon^2\right)(\cdot, s)\\
& \ \ \ +\left(3+\chi_1^2\right)\alpha\|w_0\|_{L^\infty}^2\int_s^t\int_\Omega\frac{|\nabla u_{\varepsilon}|^2}{u_{\varepsilon}}\\
&\ \ \ +\left(3+\chi_2^2\right)\beta\|w_0\|_{L^\infty}^2\int_s^t\int_\Omega\frac{|\nabla v_{\varepsilon}|^2}{v_{\varepsilon}}\\
& \ \ \ +\left(1+2\chi_1\|w_0\|_{L^\infty}+\|w_0\|_{L^\infty}^2\right)\alpha\int^t_s\int_\Omega F_{\varepsilon}(u_{\varepsilon})|\nabla w_{\varepsilon}|^2\\
& \ \ \ +\left(1+2\chi_2\|w_0\|_{L^\infty}+\|w_0\|_{L^\infty}^2\right)\beta\int^t_s\int_\Omega F_{\varepsilon}(v_{\varepsilon})|\nabla w_{\varepsilon}|^2.
\end{split}
\ee
Using the boundedness of $\|\nabla w_\varepsilon\|_{L^2}$ in \eqref{ulnu+gradv-int-eps} and the conservations of $u_\varepsilon$ and $ v_\varepsilon$,  $0\leq F_\varepsilon (s)\leq s$ and $w_{\varepsilon}\leq \|w_0\|_{L^\infty}$, cf.  \eqref{reg-2.10}, \ref{reg-2.17} and  \eqref{reg-2.15}, we see that
$$
\int_\Omega\left(|\nabla w_\varepsilon|^2+\alpha F_{\varepsilon}(u_\varepsilon) w_\varepsilon^2+\beta F_{\varepsilon}(v_\varepsilon) w_\varepsilon^2\right)(\cdot, s)\leq  K_2+\left(\alpha \|u_0\|_{L^1}+\beta \|v_0\|_{L^1}\right)\|w_0\|_{L^\infty}^2.
$$
Inserting this into \eqref{int-4.58} and using  \eqref{int-4.16} or  \eqref{st-comp-est}, we accomplish  \eqref{int-4.49}  or \eqref{int-4.490}.
\end{proof}

\begin{lemma}\label{lemma-4.6} For $m>\frac{n}{2}+1$, there exists $K_{10}=K_{10}(u_0,v_0,w_0)>0$ such that  the global solution of \eqref{reg-2.11} fulfills, for all $\varepsilon\in(0,1)$,
\be\label{int-4.72}
\int^\infty_0\left(\|u_{\varepsilon t}(\cdot,t)\|^2_{(W^{m,2})^\ast}+\|v_{\varepsilon t}(\cdot,t)\|^2_{(W^{m,2})^\ast}\right)\leq K_{10},  \ \ \   \text{if  } \Omega  \text{ is convex};
\ee
and, there exists  $K_{11}=K_{11}(u_0, v_0, w_0)>0$ such that, for any  $\varepsilon\in(0,1)$ and $t\geq 0$,
\be\label{int-4.73}
\int_t^{t+1}\left(\|u_{\varepsilon t}(\cdot,s)\|^2_{(W^{m,2})^\ast}+\|v_{\varepsilon t}(\cdot,s)\|^2_{(W^{m,2})^\ast}\right)\leq K_{11}, \    \text{if  } \Omega  \text{ is non-convex}.
\ee
\end{lemma}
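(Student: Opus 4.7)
The plan is to bound $\|u_{\varepsilon t}(\cdot,t)\|_{(W^{m,2})^{\ast}}$ by duality and then integrate in time against the spatio-temporal estimates already gathered in Lemma \ref{lemma-4.2}. Since $m>\frac{n}{2}+1$, the Sobolev embedding yields $W^{m,2}(\Omega)\hookrightarrow W^{1,\infty}(\Omega)$, so for every $\varphi\in W^{m,2}(\Omega)$ with $\|\varphi\|_{W^{m,2}}\leq 1$ we have $\|\varphi\|_{L^{\infty}}+\|\nabla\varphi\|_{L^{\infty}}\leq C_{0}$. Testing the $u_{\varepsilon}$-equation in \eqref{reg-2.11} by such a $\varphi$ and integrating by parts gives
\[
\int_{\Omega}u_{\varepsilon t}\varphi
=-\int_{\Omega}\nabla u_{\varepsilon}\cdot\nabla\varphi
+\chi_{1}\int_{\Omega}u_{\varepsilon}F'_{\varepsilon}(u_{\varepsilon})\nabla w_{\varepsilon}\cdot\nabla\varphi,
\]
so that, taking the supremum over such $\varphi$,
\[
\|u_{\varepsilon t}(\cdot,t)\|_{(W^{m,2})^{\ast}}^{2}
\leq 2C_{0}^{2}\Bigl(\|\nabla u_{\varepsilon}\|_{L^{1}}^{2}
+\chi_{1}^{2}\bigl(\textstyle\int_{\Omega}u_{\varepsilon}F'_{\varepsilon}(u_{\varepsilon})|\nabla w_{\varepsilon}|\bigr)^{2}\Bigr).
\]

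Next I would estimate each of the two pieces on the right by a Cauchy--Schwarz split that brings in precisely the quantities already known to be integrable in time. For the diffusive piece, the mass conservation \eqref{reg-2.15} gives
\[
\|\nabla u_{\varepsilon}\|_{L^{1}}^{2}
\leq \Bigl(\int_{\Omega}u_{\varepsilon}\Bigr)\int_{\Omega}\frac{|\nabla u_{\varepsilon}|^{2}}{u_{\varepsilon}}
=\|u_{0}\|_{L^{1}}\int_{\Omega}\frac{|\nabla u_{\varepsilon}|^{2}}{u_{\varepsilon}}.
\]
For the cross term, using $F_\varepsilon>0$ and Cauchy--Schwarz,
\[
\Bigl(\int_{\Omega}u_{\varepsilon}F'_{\varepsilon}(u_{\varepsilon})|\nabla w_{\varepsilon}|\Bigr)^{2}
\leq \Bigl(\int_{\Omega}\frac{(u_{\varepsilon}F'_{\varepsilon}(u_{\varepsilon}))^{2}}{F_{\varepsilon}(u_{\varepsilon})}\Bigr)
\Bigl(\int_{\Omega}F_{\varepsilon}(u_{\varepsilon})|\nabla w_{\varepsilon}|^{2}\Bigr).
\]
The first factor is the key algebraic point: writing $\frac{(u_{\varepsilon}F'_{\varepsilon})^{2}}{F_{\varepsilon}(u_{\varepsilon})}=u_{\varepsilon}\cdot\frac{u_{\varepsilon}(F'_{\varepsilon}(u_{\varepsilon}))^{2}}{F_{\varepsilon}(u_{\varepsilon})}$ and invoking \eqref{aid1} shows that this product is bounded by $u_{\varepsilon}$, whence the first factor is at most $\|u_{0}\|_{L^{1}}$.

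Combining the three bounds gives, with a constant $C_{1}=C_{1}(u_{0},w_{0},\chi_{1})$,
\[
\|u_{\varepsilon t}(\cdot,t)\|_{(W^{m,2})^{\ast}}^{2}
\leq C_{1}\Bigl(\int_{\Omega}\frac{|\nabla u_{\varepsilon}|^{2}}{u_{\varepsilon}}
+\int_{\Omega}F_{\varepsilon}(u_{\varepsilon})|\nabla w_{\varepsilon}|^{2}\Bigr),
\]
and exactly the analogous inequality for $v_{\varepsilon t}$ with $C_{2}=C_{2}(v_{0},w_{0},\chi_{2})$. Now integrating over $(0,\infty)$ and applying \eqref{int-4.16} produces \eqref{int-4.72} in the convex case, while integrating over $(t,t+1)$ and applying \eqref{st-comp-est} yields \eqref{int-4.73} in the non-convex case. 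I do not expect a genuine obstacle here; the only non-cosmetic point is the algebraic identity from \eqref{aid1} that makes $\frac{(u_{\varepsilon}F'_{\varepsilon})^{2}}{F_{\varepsilon}(u_{\varepsilon})}\leq u_{\varepsilon}$ work uniformly in $\varepsilon$, which is what turns the quantity $\int F_{\varepsilon}(u_{\varepsilon})|\nabla w_{\varepsilon}|^{2}$ (for which we already have good control) into a useful majorant for the chemotactic cross term.
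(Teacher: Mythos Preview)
Your proposal is correct and follows essentially the same approach as the paper: duality via the Sobolev embedding $W^{m,2}\hookrightarrow W^{1,\infty}$, a Cauchy--Schwarz split of $\nabla u_\varepsilon$ against $\frac{|\nabla u_\varepsilon|^2}{u_\varepsilon}$ and $u_\varepsilon$, a weighted Cauchy--Schwarz split of the chemotaxis term against $F_\varepsilon(u_\varepsilon)|\nabla w_\varepsilon|^2$ and $\frac{(u_\varepsilon F'_\varepsilon)^2}{F_\varepsilon}$, and the algebraic bound \eqref{aid1} to control the latter by $u_\varepsilon$, followed by time integration against Lemma~\ref{lemma-4.2}. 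The only difference is purely cosmetic---the paper squares after the Cauchy--Schwarz estimate of $|\int_\Omega u_{\varepsilon t}\varphi|$, whereas you square first and then split---so the two arguments are the same.
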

\begin{proof} For given $\varphi\in W^{m,2}$, we multiply  the first equation in \eqref{reg-2.11} by  $\varphi$, and integrate over $\Omega$ by parts  and use \eqref{aid1} to get
\begin{align*}
\left|\int_{\Omega}u_{\varepsilon t}\varphi\right|&=\left|-\int_{\Omega}\nabla u_{\varepsilon}\cdot\nabla\varphi+\chi_1\int_{\Omega}u_{\varepsilon}F'_{\varepsilon}(
u_{\varepsilon})\nabla w_{\varepsilon}\cdot\nabla\varphi\right|\\
&\leq\Bigr(\int_{\Omega}\frac{|\nabla u_{\varepsilon}|^2}{u_{\varepsilon}}\Bigr)^{\frac{1}{2}}\cdot\Bigr(\int_{\Omega}u_{\varepsilon} |\nabla\varphi|^2\Bigr)^{\frac{1}{2}}\\
&\ \ \ +\chi_1\Bigr(\int_{\Omega}F_{\varepsilon}(u_{\varepsilon})|\nabla w_{\varepsilon}|^2\Bigr)^{\frac{1}{2}}\cdot\Bigr(\int_{\Omega}
\frac{u_{\varepsilon}^2F'_{\varepsilon}(u_{\varepsilon})^2}{F_{\varepsilon}(u_{\varepsilon})}|\nabla\varphi|^2\Bigr)^{\frac{1}{2}}\\
&\leq\left(\int_{\Omega}\frac{|\nabla u_{\varepsilon}|^2}{u_{\varepsilon}}\right)^{\frac{1}{2}} \|u_0\|_{L^1}^\frac{1}{2} \|\nabla\varphi\|_{L^{\infty}}\\
&\ \ +\chi_1\left(\int_{\Omega}F_{\varepsilon}(u_{\varepsilon})|\nabla w_{\varepsilon}|^2\right)^{\frac{1}{2}}\|u_0\|_{L^1}^\frac{1}{2} \|\nabla\varphi\|_{L^{\infty}}.
\end{align*}
Hence,  the  Sobolev embedding $W^{m,2}(\Omega)\hookrightarrow W^{1,\infty}(\Omega)$ due to $m>1+\frac{n}{2}$ shows that
$$
\|u_{\varepsilon t}(\cdot,t)\|^2_{(W^{m,2})^\ast}\leq C_1\int_{\Omega}\frac{|\nabla u_{\varepsilon}|^2}{u_{\varepsilon}}+
C_1\int_{\Omega}F_{\varepsilon}(u_{\varepsilon})|\nabla w_{\varepsilon}|^2, \ \ \ \forall t>0.
$$
 Likewise,
$$
\|v_{\varepsilon t}(\cdot,t)\|^2_{(W^{m,2})^\ast}\leq C_2\int_{\Omega}\frac{|\nabla v_{\varepsilon}|^2}{v_{\varepsilon}}+
C_2\int_{\Omega}F_{\varepsilon}(v_{\varepsilon})|\nabla w_{\varepsilon}|^2, \ \ \ \forall t>0.
$$
By these two inequalities, we readily conclude \eqref{int-4.72} or \eqref{int-4.73}  from  \eqref{int-4.16} or  \eqref{st-comp-est}.
\end{proof}

\begin{lemma}\label{lemma-4.7} For  $n\leq6$,   there    exists  $K_{12}=K_{12}(u_0, v_0, w_0)>0$ such that  the global solution of \eqref{reg-2.11} satisfies, for any  $\varepsilon\in(0,1)$ and $t> 0$,
\be\label{int-4.76}
\int_t^{t+1} \left(\|u_{\varepsilon t}(\cdot,s)\|_{(W^{1,\infty})^\ast}+
\|v_{\varepsilon t}(\cdot,s)\|_{(W^{1,\infty})^\ast}+\|w_{\varepsilon t}(\cdot,s)\|_{(W^{1,\infty})^\ast}\right)\leq K_{12}. \ee
\end{lemma}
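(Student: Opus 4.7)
The plan is to dualize each equation in \eqref{reg-2.11} against an arbitrary test function $\varphi\in W^{1,\infty}(\Omega)$, integrate by parts to transfer derivatives onto $\varphi$, and then bound everything by $\|\varphi\|_{W^{1,\infty}}$ times a spatial integral that we can control in $L^1_t$ on unit time intervals using the $\varepsilon$-independent estimates already established in Lemmas \ref{lemma-4.1}--\ref{lemma-4.3}.

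For the $u$-equation (the $v$-equation being identical), multiplying by $\varphi$ and integrating by parts yields
\[
\left|\int_\Omega u_{\varepsilon t}\varphi\right|\leq \|\nabla\varphi\|_{L^\infty}\!\left(\int_\Omega|\nabla u_\varepsilon|+\chi_1\int_\Omega u_\varepsilon F'_\varepsilon(u_\varepsilon)|\nabla w_\varepsilon|\right),
\]
so by $0\leq F'_\varepsilon\leq 1$ from \eqref{reg-2.9},
\[
\|u_{\varepsilon t}(\cdot,s)\|_{(W^{1,\infty})^\ast}\leq \int_\Omega|\nabla u_\varepsilon|+\chi_1\int_\Omega u_\varepsilon|\nabla w_\varepsilon|.
\]
Upon integrating over $(t,t+1)$, the first term is handled by spatial H\"older together with the $L^{(n+2)/(n+1)}_{t,x}$-bound on $\nabla u_\varepsilon$ from Lemma \ref{lemma-4.3}. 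For the cross term I would apply space-time H\"older:
\[
\int_t^{t+1}\!\!\int_\Omega u_\varepsilon|\nabla w_\varepsilon|\leq \left(\int_t^{t+1}\!\!\int_\Omega u_\varepsilon^{(n+2)/n}\right)^{\!\!n/(n+2)}\!\!\left(\int_t^{t+1}\!\!\int_\Omega|\nabla w_\varepsilon|^{q}\right)^{\!\!1/q}\!\!\cdot|(t,t+1)\times\Omega|^{\theta},
\]
where $q$ and $\theta\geq 0$ are chosen via the condition $\tfrac{n}{n+2}+\tfrac{1}{q}+\theta=1$ with $q\leq 4$. This is feasible precisely when $\tfrac{n}{n+2}+\tfrac14\leq 1$, i.e.\ $n\leq 6$, which is the hypothesis of the lemma. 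The two factors are then bounded by Lemma \ref{lemma-4.3} and by the $L^4_{t,x}$-bound on $\nabla w_\varepsilon$ coming from Lemma \ref{lemma-4.2} (\eqref{int-4.16} in the convex case or \eqref{st-comp-est} in the non-convex case).

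For the $w$-equation the estimate is considerably softer: testing against $\varphi$ and integrating by parts gives
\[
\|w_{\varepsilon t}(\cdot,s)\|_{(W^{1,\infty})^\ast}\leq \int_\Omega|\nabla w_\varepsilon|+\|w_0\|_{L^\infty}\!\int_\Omega(\alpha u_\varepsilon+\beta v_\varepsilon),
\]
where I used $F_\varepsilon(s)\leq s$ and \eqref{reg-2.17}. The first summand is controlled by H\"older and the $L^2$-bound of $\nabla w_\varepsilon$ in \eqref{ulnu+gradv-int-eps}; the second is constant in time by the mass conservation of Lemma \ref{lemma-2.4}. Integrating over $(t,t+1)$ gives the desired uniform bound.

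The main (indeed, only) obstacle is the sharp space-time H\"older balance for the cross term $\int\!\!\int u_\varepsilon|\nabla w_\varepsilon|$: pairing the $L^{(n+2)/n}_{t,x}$-regularity of $u_\varepsilon$ against the $L^4_{t,x}$-regularity of $\nabla w_\varepsilon$ just fits into $L^1_{t,x}$ iff $n\leq 6$, and this is exactly where the dimensional restriction enters. All remaining ingredients are direct consequences of the previously established $\varepsilon$-independent estimates.
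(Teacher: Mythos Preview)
Your proposal is correct and follows essentially the same route as the paper's proof. The only cosmetic difference is that the paper applies Young's inequality pointwise in $x$ before integrating in time---splitting $u_\varepsilon|\nabla w_\varepsilon|\le u_\varepsilon^{(n+2)/n}+|\nabla w_\varepsilon|^{(n+2)/2}$ and then $|\nabla w_\varepsilon|^{(n+2)/2}\le |\nabla w_\varepsilon|^4+1$ (valid since $\frac{n+2}{2}\le 4$ for $n\le 6$)---whereas you integrate first and then apply space--time H\"older; both variants extract the same dimensional constraint $\tfrac{n}{n+2}+\tfrac14\le 1$ from the pairing of the $L^{(n+2)/n}_{t,x}$ bound on $u_\varepsilon$ with the $L^4_{t,x}$ bound on $\nabla w_\varepsilon$, and your treatment of the $w$-equation via the $L^2$-bound on $\nabla w_\varepsilon$ from \eqref{ulnu+gradv-int-eps} is in fact slightly more economical than the paper's use of the $L^4$-bound.
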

\begin{proof}
For   any given $\psi\in W^{1,\infty}$ with  $\|\psi\|_{W^{1,\infty}}\leq1$, notice that $0\leq F'\leq 1$ by \eqref{reg-2.9} and $\frac{n+2}{2}\leq4$ since $n\leq6$, and so by Young's inequality, we get that
\begin{align*}
\Bigr|\int_{\Omega}u_{\varepsilon t}\psi\Bigr|&=\Bigr|-\int_{\Omega}\nabla u_{\varepsilon}\cdot\nabla\varphi+\chi_1\int_{\Omega}u_{\varepsilon}F'_{\varepsilon}(
u_{\varepsilon})\nabla w_{\varepsilon}\cdot\nabla\psi\Bigr|\\
&\leq\int_{\Omega}|\nabla u_{\varepsilon}|+\chi_1\int_{\Omega}u_{\varepsilon}|\nabla w_{\varepsilon}|\\
&\leq\int_{\Omega}|\nabla u_{\varepsilon}|^{\frac{n+2}{n+1}}+|\Omega|+\chi_1\int_{\Omega}u_{\varepsilon}^{\frac{n+2}{n}}+\chi_1\int_{\Omega}
|\nabla w_{\varepsilon}|^{\frac{n+2}{2}}\\
&\leq\int_{\Omega}|\nabla u_{\varepsilon}|^{\frac{n+2}{n+1}}+(1+\chi_1)|\Omega|+\chi_1\int_{\Omega}u_{\varepsilon}^{\frac{n+2}{n}}+\chi_1\int_{\Omega}
|\nabla w_{\varepsilon}|^4,
\end{align*}
 which, upon being integrated from $t$ to $t+1$, gives rise to
 \be\label{u-epsi-func1}
\int_t^{t+1}\|u_{\varepsilon t}\|_{(W^{1,\infty})^\ast} \leq C_1\left(1+\int_t^{t+1}\int_{\Omega} \left(u_{\varepsilon}^{\frac{n+2}{n}}+|\nabla u_{\varepsilon}|^{\frac{n+2}{n+1}}+|\nabla w_{\varepsilon}|^4\right)\right).
\ee
In the same manner, we have
 \be\label{v-epsi-func1}
\int_t^{t+1}\|v_{\varepsilon t}\|_{(W^{1,\infty})^\ast} \leq C_2\left(1+\int_t^{t+1}\int_{\Omega} \left(v_{\varepsilon}^{\frac{n+2}{n}}+|\nabla v_{\varepsilon}|^{\frac{n+2}{n+1}}+|\nabla w_{\varepsilon}|^4\right)\right),
\ee
 and, using the third equation in \eqref{reg-2.11} and the facts that $\|u_\varepsilon\|_{L^1}=\|u_0\|_{L^1}$, $\|v_\varepsilon\|_{L^1}=\|v_0\|_{L^1}$,  $0\leq F_\varepsilon (s)\leq s$ and $w_{\varepsilon}\leq \|w_0\|_{L^\infty}$, cf.  \eqref{reg-2.10}, \ref{reg-2.17} and  \eqref{reg-2.15}, we deduce that
\begin{align*}
\Bigr|\int_{\Omega}w_{\varepsilon t}\psi\Bigr|&=\Bigr|-\int_{\Omega}\nabla w_{\varepsilon}\cdot\nabla\varphi-\alpha\int_{\Omega}F_{\varepsilon}(u_{\varepsilon})w_{\varepsilon}\psi-\beta\int_{\Omega}F_{\varepsilon}
(v_{\varepsilon})w_{\varepsilon}\psi\Bigr|\\
&\leq \int_{\Omega}|\nabla w_{\varepsilon}|+\alpha\int_{\Omega} u_{\varepsilon} w_{\varepsilon}+\beta \int_{\Omega} v_{\varepsilon}  w_{\varepsilon}\\
&\leq \int_{\Omega}|\nabla w_{\varepsilon}|^4+|\Omega|+\left(\alpha\|u_0\|_{L^1}+\beta \|v_0\|_{L^1}\right)\|w_0\|_{L^\infty},
\end{align*}
entailing
 \be\label{w-epsi-func1}
\int_t^{t+1}\|w_{\varepsilon t}\|_{(W^{1,\infty})^\ast}\leq C_3\left(1+\int_t^{t+1}\int_{\Omega}|\nabla w_{\varepsilon}|^4 \right).
\ee
The desired estimate \eqref{int-4.76} follows from a combination of \eqref{u-epsi-func1}, \eqref{v-epsi-func1} and \eqref{w-epsi-func1} with Lemmas \ref{lemma-4.2} and \ref{lemma-4.3}.
\end{proof}
\subsection{Global existence of weak solutions in 3,4,5 D}
With the estimates gained in last subsection, we can  first derive strong compactness properties by means of the Aubin-Lions and then  obtain the existence of weak solutions in  in 3, 4 and 5D via extraction procedure, cf. \cite{TW12-JDE, Win12-CPDE, Win17-JDE}.
\begin{lemma}\label{lemma-4.8} For  $n\in\{3,4,5\}$, there exist $(\varepsilon_j)_{j\in\mathbb{N}}\subset(0,1)$ and nonnegative functions $u,v$ and $w$ satisfying \eqref{data-reg} such that $\varepsilon_j\searrow0$ as $j\rightarrow\infty$ and that the global solution of \eqref{reg-2.11} satisfies, as $\varepsilon=\varepsilon_j\searrow0$, that
\be\label{int-4.79}
\left(u_{\varepsilon}, \  v_{\varepsilon}, \  w_{\varepsilon}\right) \rightarrow \left(u, \  v, \ w\right)\ \ \ a.e. \ in \ \Omega\times(0,\infty),
\ee
\be\label{int-4.80}
\left(u_{\varepsilon}, \  F_{\varepsilon}(u_{\varepsilon}), \  u_{\varepsilon}F'_{\varepsilon}(u_{\varepsilon})\right) \rightarrow \left(u, \    u,  \ u\right)  \mbox{in}  \ L^p_{\mbox{loc}}(\overline{\Omega}\times[0,\infty)), \ \forall  p\in[1,\frac{n+2}{n}),
\ee
\be\label{int-4.83}
\left(v_{\varepsilon}, \  F_{\varepsilon}(v_{\varepsilon}), \  v_{\varepsilon}F'_{\varepsilon}(v_{\varepsilon})\right) \rightarrow \left(v, \    v,  \  v\right)  \mbox{in}  \ L^p_{\mbox{loc}}(\overline{\Omega}\times[0,\infty)),\  \forall  p\in[1, \frac{n+2}{n}),
\ee
\be\label{int-4.81}
\left(\nabla u_{\varepsilon}, \  \nabla v_{\varepsilon}\right) \rightharpoonup\left(\nabla u, \ \nabla v\right) \    \mbox{ in }   L^{\frac{n+2}{n+1}}_{\mbox{loc}}(\overline{\Omega}\times[0,\infty)),
\ee
\be\label{int-4.86}
 w_{\varepsilon}(\cdot,t)\rightarrow w(\cdot,t) \ \ in \ L^q(\Omega) \ for \  a.e.  \ t\in(0,\infty),
\ee
\be\label{int-4.87}
w_{\varepsilon}\stackrel{*}{\rightharpoonup} w \ \ in \ L^{\infty}_{loc}(\overline{\Omega}\times[0,\infty)),  \   \nabla w_{\varepsilon}\rightharpoonup\nabla w \ \ in \ L^4_{loc}(\overline{\Omega}\times[0,\infty)) \ and
\ee
\be\label{int-4.88}
 \left(u_{\varepsilon}F'_{\varepsilon}(u_{\varepsilon})\nabla w_{\varepsilon},\
  v_{\varepsilon}F'_{\varepsilon}(v_{\varepsilon})\nabla w_{\varepsilon}\right)\rightharpoonup \left(u\nabla w, \ v\nabla w\right)~~~in~L^1_{loc}(\overline{\Omega}\times[0,\infty)).
\ee
Here, the exponent $q$ is related to the Sobolev conjugate number $\frac{4n}{(n-4)^+}$ and satisfies
\be\label{q-exponent}
q\in [1,\infty] \text{  if  } n=3;  \  \  q\in [1,\infty) \text{  if    } n=4; \ \  q\in [1,20) \text{  if   } n=5.
\ee
Moreover, $(u,v,w)$ is a global weak solution of \eqref{PPT} in the sense of Definition \ref{weak-uvw}.
\end{lemma}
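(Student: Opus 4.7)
The plan is to extract subsequences via the Aubin--Lions lemma, using the $\varepsilon$-independent bounds accumulated in Subsection 4.1, and then pass to the limit in the weak formulation \eqref{reg-2.4}--\eqref{reg-2.6}.

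\textbf{Step 1 (Compactness of $u_\varepsilon$ and $v_\varepsilon$).} Lemma \ref{lemma-4.3} yields that $(u_\varepsilon)_{\varepsilon}$ is bounded in $L^{(n+2)/(n+1)}_{\mathrm{loc}}([0,\infty);W^{1,(n+2)/(n+1)}(\Omega))$, while Lemma \ref{lemma-4.6} gives $(u_{\varepsilon t})_{\varepsilon}$ bounded in $L^2_{\mathrm{loc}}([0,\infty);(W^{m,2}(\Omega))^*)$ for some $m>n/2+1$. Since $W^{1,(n+2)/(n+1)}(\Omega)$ embeds compactly into $L^{(n+2)/(n+1)}(\Omega)$, the Aubin--Lions lemma yields a subsequence with $u_{\varepsilon_j}\to u$ strongly in $L^{(n+2)/(n+1)}_{\mathrm{loc}}(\overline\Omega\times[0,\infty))$ and hence a.e. Combining the a.e.\ convergence with the equi-integrability provided by the $L^{(n+2)/n}_{\mathrm{loc}}$ bound of Lemma \ref{lemma-4.3} and Vitali's theorem gives strong $L^p_{\mathrm{loc}}$ convergence for every $p\in[1,(n+2)/n)$. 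The analogous reasoning applies to $v_\varepsilon$, producing \eqref{int-4.79}, the first parts of \eqref{int-4.80}--\eqref{int-4.83}, and \eqref{int-4.81}.

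\textbf{Step 2 (Compactness of $w_\varepsilon$).} The $L^\infty$ bound \eqref{reg-2.17} immediately gives the weak-$*$ convergence in \eqref{int-4.87}, and the bound $\int\int|\nabla w_\varepsilon|^4 \leq K_4$ from Lemma \ref{lemma-4.2} gives weak $L^4_{\mathrm{loc}}$ convergence of $\nabla w_\varepsilon$. Combined with the Hessian bound from Lemma \ref{lemma-4.2} and the time-derivative bound of Lemma \ref{lemma-4.5}, Aubin--Lions (applied with the compact embedding $W^{2,2}(\Omega)\hookrightarrow\hookrightarrow W^{1,2}(\Omega)$ and the $L^2$-bound on $w_{\varepsilon t}$) yields strong convergence $w_{\varepsilon_j}\to w$ in $L^2_{\mathrm{loc}}([0,\infty);W^{1,2}(\Omega))$. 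Up to a further subsequence, $w_{\varepsilon_j}(\cdot,t)\to w(\cdot,t)$ in $W^{1,2}(\Omega)$ for a.e.\ $t$; interpolating with the $L^\infty$ bound delivers the $L^q$ convergence \eqref{int-4.86} in the range \eqref{q-exponent} dictated by the Sobolev embedding $W^{1,4}(\Omega)\hookrightarrow L^{4n/(n-4)^+}(\Omega)$.

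\textbf{Step 3 (Convergence of the $F_\varepsilon$-structure).} By \eqref{reg-2.9}, $F_\varepsilon(s)\to s$ and $sF_\varepsilon'(s)\to s$ pointwise as $\varepsilon\searrow0$, with $0\le F_\varepsilon(s)\le s$ and $0\le sF_\varepsilon'(s)\le s$. The uniform $L^{(n+2)/n}_{\mathrm{loc}}$ control of $u_\varepsilon$ and Vitali's theorem then upgrade the a.e.\ convergence to $L^p_{\mathrm{loc}}$ convergence of $F_\varepsilon(u_\varepsilon)$ and $u_\varepsilon F_\varepsilon'(u_\varepsilon)$ to $u$ for every $p\in[1,(n+2)/n)$, completing \eqref{int-4.80}; the argument for $v_\varepsilon$ is identical, giving \eqref{int-4.83}.

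\textbf{Step 4 (The nonlinear fluxes and passage to the limit).} The products $u_\varepsilon F_\varepsilon'(u_\varepsilon)\nabla w_\varepsilon$ and $v_\varepsilon F_\varepsilon'(v_\varepsilon)\nabla w_\varepsilon$ are of ``strong$\times$weak'' type: Step 3 provides strong $L^p_{\mathrm{loc}}$ convergence of the first factor for any $p<(n+2)/n$, while Step 2 gives weak $L^4_{\mathrm{loc}}$ convergence of $\nabla w_\varepsilon$. Choosing $p$ close enough to $(n+2)/n$ that $1/p+1/4\le 1$ (valid exactly for $n\le 5$, which is the dimensional restriction in (B2)), H\"older's inequality and a standard strong-weak convergence lemma yield \eqref{int-4.88}. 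Similarly, $F_\varepsilon(u_\varepsilon)w_\varepsilon\to uw$ and $F_\varepsilon(v_\varepsilon)w_\varepsilon\to vw$ in $L^1_{\mathrm{loc}}$ since $w_\varepsilon\to w$ strongly in $L^2_{\mathrm{loc}}$ and $F_\varepsilon(u_\varepsilon)\to u$ in $L^p_{\mathrm{loc}}$ for some $p>2$ (as $(n+2)/n>2$ fails only when $n\ge 3$, so here one uses $p\in[1,(n+2)/n)$ together with the $L^\infty$ bound on $w_\varepsilon$). With these convergences in hand, one passes to the limit $\varepsilon=\varepsilon_j\searrow 0$ term-by-term in the weak identities associated with \eqref{reg-2.11} to obtain \eqref{reg-2.4}--\eqref{reg-2.6}.

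\textbf{Main obstacle.} The delicate point is the justification of \eqref{int-4.88}: the dimensional bound $n\le 5$ is precisely what is needed so that the H\"older pairing of the strong $L^{p}$ convergence of $u_\varepsilon F_\varepsilon'(u_\varepsilon)$ (with $p<(n+2)/n$) with the weak $L^4$ convergence of $\nabla w_\varepsilon$ gives convergence in $L^1_{\mathrm{loc}}$. This is also what forces the regularity class \eqref{data-reg} for the limit. Everything else (mass conservation passing to the limit, $L^\infty$ bounds on $w$, etc.) is a routine consequence of the lower-semicontinuity of norms and the $\varepsilon$-uniform estimates from Lemmas \ref{lemma-2.4}--\ref{lemma-4.7}.
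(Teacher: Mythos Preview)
Your argument follows the same overall strategy as the paper's proof: Aubin--Lions compactness for $(u_\varepsilon,v_\varepsilon,w_\varepsilon)$, Vitali's theorem for the $F_\varepsilon$-nonlinearities, and a strong--weak product argument for the fluxes, with the dimensional restriction $n\le 5$ entering through the exponent pairing $\tfrac{n}{n+2}+\tfrac14<1$.

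There are two minor deviations worth noting. First, for the time-derivative control on $u_\varepsilon,v_\varepsilon$ you invoke Lemma~\ref{lemma-4.6} (the $(W^{m,2})^*$ bound), whereas the paper uses Lemma~\ref{lemma-4.7} (the $(W^{1,\infty})^*$ bound); either works in Aubin--Lions. Second, for $w_\varepsilon$ the paper runs Aubin--Lions through the chain $W^{1,4}(\Omega)\hookrightarrow\hookrightarrow L^q(\Omega)\hookrightarrow (W^{1,\infty}(\Omega))^*$ using Lemmas~\ref{lemma-4.2} and~\ref{lemma-4.7}, obtaining directly $w_\varepsilon\to w$ in $L^4((0,T);L^q(\Omega))$ for the full range~\eqref{q-exponent}. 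Your alternative via $W^{2,2}\hookrightarrow\hookrightarrow W^{1,2}$ (Lemmas~\ref{lemma-4.2} and~\ref{lemma-4.5}) is legitimate and in some ways cleaner, but as written it has a small gap: interpolating $W^{1,2}$-convergence with the uniform $L^\infty$ bound yields convergence in $L^q(\Omega)$ only for $q<\infty$, so for $n=3$ you do not reach the endpoint $q=\infty$ claimed in~\eqref{q-exponent}. This is easily repaired by observing that in three dimensions $W^{2,2}(\Omega)\hookrightarrow\hookrightarrow C(\overline\Omega)$, so Aubin--Lions already gives $L^2((0,T);L^\infty(\Omega))$ convergence directly; alternatively, just follow the paper and use the $W^{1,4}$ route.
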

\begin{proof}
By Lemmas \ref{lemma-2.4}, \ref{lemma-2.5},  \ref{lemma-4.1}, \ref{lemma-4.2}, \ref{lemma-4.3} and \ref{lemma-4.7}, for each $T>0$, we know that
$$
(u_{\varepsilon})_{\varepsilon\in(0,1)}~~and~~(v_{\varepsilon})_{\varepsilon\in(0,1)}~~~are~bounded~in~L^{\frac{n+2}{n+1}}((0,T);W^{1,\frac{n+2}{n+1}}(\Omega))
$$
and
$$
(u_{\varepsilon t})_{\varepsilon\in(0,1)}~~and~~(v_{\varepsilon t})_{\varepsilon\in(0,1)}~~~are~bounded~in~L^1((0,T);(W^{1,\infty}(\Omega))^\ast)
$$
as well as
$$
(w_{\varepsilon})_{\varepsilon\in(0,1)}~~~is~bounded~in~L^4((0,T);W^{1,4}(\Omega))
$$
and
$$
(w_{\varepsilon t})_{\varepsilon\in(0,1)}~~~is~bounded~in~L^1((0,T);(W^{1,\infty}(\Omega))^\ast).
$$
By the compact embeddings $W^{1,\frac{n+2}{n+1}}(\Omega)\hookrightarrow L^1(\Omega)$ and $W^{1,4}(\Omega)\hookrightarrow L^q(\Omega)$ for all $q\in[1,\infty]$ with  $1-\frac{n}{4}> -\frac{n}{q}$, twice direct applications of the  Aubin-Lions lemma  \cite{Sim97}, we see there exist $(\varepsilon_j)_{j\in\mathbb{N}}\subset(0,1)$ and nonnegative functions $u,v$ and $w$ fulfilling \eqref{data-reg} and such that,  for any such $q$ and   $\varepsilon=\varepsilon_j\searrow0$,
\be\label{int-4.89}
u_{\varepsilon}\rightarrow u \ \ \  and \ \ \ v_{\varepsilon}\rightarrow v \ \  in \ L^1(\Omega\times(0,T))
\ee
as well as
\be\label{int-4.90}
w_{\varepsilon}\rightarrow w \ \   in \ L^4((0,T);L^q(\Omega)).
\ee
 Now, based on \eqref{int-4.89} and \eqref{int-4.90}, upon  passing to a subsequence if necessary,  we can infer that \eqref{int-4.79} and \eqref{int-4.86} hold, and also that \eqref{int-4.81} and  \eqref{int-4.87}  hold. Using the a.e. convergence in \eqref{int-4.79}, the boundedness in \eqref{int-4.29} and the properties of $F_\varepsilon$ in \eqref{reg-2.9} and \eqref{reg-2.10}, we use  the Vitali convergence theorem (roughly, a.e. convergence plus uniform integrability imply convergence) to infer   \eqref{int-4.80} and \eqref{int-4.83}. Using the convergence features in  \eqref{int-4.80},  \eqref{int-4.83} and \eqref{int-4.87} and noting the fact that $\frac{n}{n+2}+\frac{1}{4}<1$ since $n\leq5$, we conclude  \eqref{int-4.88}, which together with \eqref{data-reg} implies  that the regularity conditions in Definition \ref{weak-uvw} are fulfilled. The reminding verifications of  the integration by parts identities   \eqref{reg-2.4}, \eqref{reg-2.5} and \eqref{reg-2.6} can be easily adapted from   \cite{Ren-Liu-1, Ren-Liu-4,  Win17-JDE}.
\end{proof}
\begin{proof}[\bf{Proof of 3,4, 5D global existence of weak solutions}]
The statement on  3,4 and 5D global existence of weak solutions  has been fully contained in  Lemma \ref{lemma-4.8}.
\end{proof}
\subsection{Large time behavior  of global weak solutions in convex domains}
In this subsection, we focus on the eventual smoothness and stabilization of global weak solutions to \eqref{PPT}, that is, the limiting functions $(u,v,w)$ of $(u_\varepsilon, v_\varepsilon, w_\varepsilon)$.
\begin{lemma}\label{lemma-4.9} For $n\in\{3,4,5\}$, there exists $(\varepsilon_j)_{j\in\mathbb{N}}\subset(0,1)$ of numbers $\varepsilon_j\searrow0$ such that as $\varepsilon=\varepsilon_j\searrow0$, the global solution of \eqref{reg-2.11} fulfill the following properties:
\be\label{int-4.59}
w\in C^0([0,\infty);L^2(\Omega))
\ee
and
\be\label{int-4.62}
w_{\varepsilon}\rightarrow w \ \ in \ L^{\infty}_{\mbox{loc}}([0,\infty);L^2(\Omega)).
\ee
\end{lemma}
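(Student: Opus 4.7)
The plan is to combine the uniform-in-$\varepsilon$ time-derivative control of Lemma \ref{lemma-4.5} with the a.e.\ convergence and $L^\infty$-bound for $w_\varepsilon$ to produce a uniform-in-time $L^2$ modulus of continuity, then to pass this modulus to the limit and finally to upgrade the convergence from a.e.-in-$t$ to uniform-in-$t$ via a finite $\delta$-net argument.

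\textbf{Step 1 (equicontinuity of the approximate family).} Writing $w_\varepsilon(\cdot,t)-w_\varepsilon(\cdot,s)=\int_s^t w_{\varepsilon\tau}(\cdot,\tau)\,d\tau$ and applying the Cauchy--Schwarz inequality pointwise in $x$, then integrating over $\Omega$, one gets
$$
\|w_\varepsilon(\cdot,t)-w_\varepsilon(\cdot,s)\|_{L^2(\Omega)}^2\le (t-s)\int_s^t\!\int_\Omega w_{\varepsilon\tau}^2,\qquad 0\le s<t.
$$
By \eqref{int-4.49} (or \eqref{int-4.490} on the unit-length window containing $[s,t]$), the right-hand side is bounded by $K_8(t-s)$ uniformly in $\varepsilon\in(0,1)$. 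Hence $\{t\mapsto w_\varepsilon(\cdot,t)\}_{\varepsilon\in(0,1)}$ is uniformly $\tfrac12$-H\"older continuous as a family of mappings into $L^2(\Omega)$.

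\textbf{Step 2 (continuity of the limit).} The a.e.\ convergence \eqref{int-4.79} and the uniform bound $0\le w_\varepsilon\le\|w_0\|_{L^\infty}$ from \eqref{reg-2.17} together with dominated convergence show that, for every fixed $t$ at which the pointwise convergence \eqref{int-4.86} holds (a set of full measure in $[0,\infty)$ for any admissible $q$ from \eqref{q-exponent}, in particular $q=2$), one has $w_\varepsilon(\cdot,t)\to w(\cdot,t)$ in $L^2(\Omega)$. Letting $\varepsilon=\varepsilon_j\searrow 0$ in the estimate of Step 1 along two such good times $s,t$ yields
$$
\|w(\cdot,t)-w(\cdot,s)\|_{L^2(\Omega)}^2\le K_8(t-s).
$$
Since this set of good times is dense, and since $w$ is defined only a.e.\ in $t$ to begin with, after redefining $w(\cdot,t)$ on a null set of times as the unique $L^2$-continuous extension we obtain $w\in C^0([0,\infty);L^2(\Omega))$, i.e.\ \eqref{int-4.59}.

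\textbf{Step 3 (uniform-in-$t$ convergence).} Fix $T>0$ and $\eta>0$. By the two equicontinuity properties from Steps 1 and 2, choose $\delta>0$ such that $|t-s|<\delta$ forces simultaneously $\|w_\varepsilon(\cdot,t)-w_\varepsilon(\cdot,s)\|_{L^2}<\eta/3$ for every $\varepsilon$ and $\|w(\cdot,t)-w(\cdot,s)\|_{L^2}<\eta/3$. Pick a finite $\delta$-net $\{t_1,\dots,t_N\}\subset[0,T]$ whose points all lie in the measure-theoretic good set on which $w_\varepsilon(\cdot,t_i)\to w(\cdot,t_i)$ in $L^2(\Omega)$ (possible by density). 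For $j$ large enough,
$$
\max_{1\le i\le N}\|w_{\varepsilon_j}(\cdot,t_i)-w(\cdot,t_i)\|_{L^2(\Omega)}<\eta/3.
$$
For any $t\in[0,T]$, choose $t_i$ with $|t-t_i|<\delta$ and apply the triangle inequality through $t_i$ to get $\|w_{\varepsilon_j}(\cdot,t)-w(\cdot,t)\|_{L^2(\Omega)}<\eta$; since $\eta,T$ were arbitrary, \eqref{int-4.62} follows.

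The only delicate point is Step 3: a.e.-in-$t$ convergence alone is not enough to give uniform-in-$t$ convergence, so the argument hinges on the $\varepsilon$-uniform equicontinuity, which is the crucial qualitative content of the $L^2((0,\infty);L^2(\Omega))$ bound on $w_{\varepsilon t}$ established in Lemma \ref{lemma-4.5}. Everything else is soft compactness and interpolation with the available $L^\infty$-control on $w_\varepsilon$.
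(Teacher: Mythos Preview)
Your proof is correct and rests on the same key estimate as the paper's: the uniform $\tfrac12$-H\"older continuity of $t\mapsto w_\varepsilon(\cdot,t)$ in $L^2(\Omega)$, obtained from the $L^2$-bound on $w_{\varepsilon t}$ in Lemma~\ref{lemma-4.5}. The paper then simply invokes Arzel\`a--Ascoli in $C([0,T];L^2(\Omega))$ to conclude; the pointwise-in-$t$ relative compactness needed for that is available from the uniform $H^1$-bound $\|\nabla w_\varepsilon(\cdot,t)\|_{L^2}\le C$ in \eqref{ulnu+gradv-int-eps}, though the paper does not spell this out.

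Your route differs only in how you close the argument: instead of pointwise compactness plus Arzel\`a--Ascoli, you feed in the a.e.-in-$t$ convergence $w_\varepsilon(\cdot,t)\to w(\cdot,t)$ in $L^2$ already established in \eqref{int-4.86}, and then run the $\delta$-net upgrade by hand. This is in effect a proof of the relevant Arzel\`a--Ascoli statement, with the advantage that it recycles Lemma~\ref{lemma-4.8} and needs no separate compactness input. Either way, the substance is the same.
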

\begin{proof}
In light of the $\varepsilon$-independent estimates  provided in Lemmas  \ref{lemma-2.5} and  \ref{lemma-4.5}, we shall adapt the arguments in  \cite[Corollary 5.3]{TW12-JDE} to derive \eqref{int-4.59} and  \eqref{int-4.62}. As a matter of fact, for  any $T>0$, it follows from Lemmas \ref{lemma-2.5} and \ref{lemma-4.5} that  $(w_\varepsilon)_{\varepsilon\in (0, 1)}$ is bounded in  $L^\infty((0, T); L^\infty(\Omega))$  and that  $(w_{\varepsilon t})_{\varepsilon\in (0, 1)}$ is bounded in $L^2((0, T); L^2(\Omega))$. Then, arguing as  \cite[Corollary 5.3]{TW12-JDE}, we see that
$$
\sup_{t\in (0, T)}\|w_{\varepsilon}(\cdot, t)\|_{L^2}+\sup_{t\neq s,\
t, s\in (0, T)}\frac{\left|\|w_{\varepsilon}(\cdot, t)\|_{L^2}-\|w_{\varepsilon}(\cdot, s)\|_{L^2}\right|}{|t-s|^\frac{1}{2}}\leq C_1,
$$
 that is, $(w_\varepsilon)_{\varepsilon\in (0, 1)}$ is bounded in   $C^\frac{1}{2}([0, T]; L^2(\Omega))$, and thus is relatively compact in   $C([0, T]; L^2(\Omega))$ by the Arzela and Ascoli compactness theorem. This establishes \eqref{int-4.62} and  thus \eqref{int-4.59}.
\end{proof}
In the sequel, we fix $(\varepsilon=\varepsilon_j)_{j\in\mathbb{N}}$ so that Lemmas \ref{lemma-4.8} and \ref{lemma-4.9} hold.  With the strong convergence provided in Lemma \ref{lemma-4.14} below, the following lemma follows.

\begin{lemma}\label{lemma-4.10} With $(u_\varepsilon, v_\varepsilon, w_\varepsilon)$ replaced by  $(u,v,w)$ constructed in Lemma \ref{lemma-4.8},  the conclusions of Lemmas \ref{lemma-2.4}, \ref{lemma-2.5} and \ref{lemma-4.2} through \ref{lemma-4.7} still hold.
\end{lemma}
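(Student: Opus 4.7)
The plan is to transfer each of the $\varepsilon$-uniform bounds collected in Lemmas \ref{lemma-2.4}, \ref{lemma-2.5} and \ref{lemma-4.2} through \ref{lemma-4.7} to the limit triple $(u,v,w)$ by combining the strong and weak convergences from Lemmas \ref{lemma-4.8} and \ref{lemma-4.9} with standard lower semicontinuity arguments. The heart of the matter is that every estimate to be inherited is (or can be rewritten as) an integral of a nonnegative quantity that is either convex in the converging variable, or a.e. convergent along $\varepsilon_j\searrow 0$, so that either weak lower semicontinuity or Fatou's lemma applies.

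First, for the mass conservation in Lemma \ref{lemma-2.4}, the strong $L^p_{\text{loc}}$ convergences \eqref{int-4.80}--\eqref{int-4.83} pass to the limit directly in $\|u_\varepsilon(\cdot,t)\|_{L^1} = \|u_0\|_{L^1}$ for a.e.\ $t$. For Lemma \ref{lemma-2.5}, the strong $L^\infty_{\text{loc}}([0,\infty); L^2(\Omega))$ convergence \eqref{int-4.62} supplies monotonicity of $t \mapsto \|w(\cdot,t)\|_{L^2}$, while the weak-$\ast$ convergence in \eqref{int-4.87} and a.e.\ convergence \eqref{int-4.79} together with Fatou yield the same for every $p \in [1,\infty]$, and in particular \eqref{reg-2.17} with $u_\varepsilon, v_\varepsilon, w_\varepsilon$ replaced by $u, v, w$.

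For Lemmas \ref{lemma-4.2} and \ref{lemma-4.3}, the key observation is that $\frac{|\nabla u_\varepsilon|^2}{u_\varepsilon} = 4|\nabla\sqrt{u_\varepsilon}|^2$. The uniform bound on $\nabla\sqrt{u_\varepsilon}$ in $L^2$ together with a.e.\ convergence of $u_\varepsilon$ identifies the weak $L^2$-limit as $\nabla\sqrt{u}$; lower semicontinuity then gives
\begin{equation*}
\int_0^t\!\!\int_\Omega \frac{|\nabla u|^2}{u} \;\leq\; \liminf_{\varepsilon_j\searrow 0}\int_0^t\!\!\int_\Omega \frac{|\nabla u_\varepsilon|^2}{u_\varepsilon},
\end{equation*}
and analogously for $v$. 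The $\varepsilon$-uniform bounds on $D^2 w_\varepsilon$ in $L^2$ and on $\nabla w_\varepsilon$ in $L^4$, combined with weak convergence of these quantities to $D^2 w$ and $\nabla w$ respectively, propagate $\int|D^2w|^2$ and $\int|\nabla w|^4$ by lower semicontinuity as well. For the mixed products $F_\varepsilon(u_\varepsilon)|\nabla w_\varepsilon|^2$ and $F_\varepsilon(v_\varepsilon)|\nabla w_\varepsilon|^2$, the a.e.\ convergence $F_\varepsilon(u_\varepsilon)\to u$ and $F_\varepsilon(v_\varepsilon)\to v$ from \eqref{int-4.80}--\eqref{int-4.83} together with $|\nabla w_\varepsilon|^2 \to |\nabla w|^2$ a.e.\ (from \eqref{int-4.79}) reduce the matter to Fatou. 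The spatial $L^{(n+2)/n}$ and $W^{1,(n+2)/(n+1)}$ bounds on $u$ and $v$ in Lemma \ref{lemma-4.3} descend to the limit from the strong convergences \eqref{int-4.80}--\eqref{int-4.83} and weak convergence \eqref{int-4.81}; and the bound on $w_t$ in Lemma \ref{lemma-4.5} follows by passing to the limit in distribution, since $w_{\varepsilon t} \rightharpoonup w_t$ in $L^2_{\text{loc}}$ by the identification of the weak solution in Definition \ref{weak-uvw}.

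Finally, Lemmas \ref{lemma-4.4}, \ref{lemma-4.6} and \ref{lemma-4.7} transfer similarly: the nonnegative duality-type integrands depend linearly or convexly on the weakly convergent quantities, so lower semicontinuity in Bochner spaces applies, and the a.e.\ convergence of $u$, $v$, $w$ ensures that the means $\bar u_\varepsilon \equiv \bar u_0$, $\bar v_\varepsilon \equiv \bar v_0$ pass through. The main obstacle I anticipate is ensuring that quantities like $\frac{|\nabla u|^2}{u}$ are well-defined on the possibly singular set $\{u=0\}$; this is handled by working with $\nabla\sqrt{u}$ as the fundamental object and noting that $\nabla\sqrt{u_\varepsilon} \rightharpoonup \nabla\sqrt{u}$ in $L^2_{\text{loc}}$ (so that $\nabla\sqrt{u}=0$ a.e.\ on $\{u=0\}$), upon which all the arguments go through routinely.
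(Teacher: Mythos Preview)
Your overall strategy---pass each $\varepsilon$-uniform estimate to the limit via Fatou's lemma or weak lower semicontinuity of convex integrals---is the natural way to establish this lemma, and indeed the paper itself offers essentially no argument (it merely points to the strong convergences in Lemmas \ref{lemma-4.8}, \ref{lemma-4.9}, and \ref{lemma-4.14}). Most of your steps go through: the treatment of $\frac{|\nabla u_\varepsilon|^2}{u_\varepsilon}=4|\nabla\sqrt{u_\varepsilon}|^2$ via weak $L^2$-lower semicontinuity, the transfer of the $L^{\frac{n+2}{n}}$ and $W^{1,\frac{n+2}{n+1}}$ bounds, and the handling of the time derivatives are all standard and correct.

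There is, however, one concrete gap. For the mixed terms $F_\varepsilon(u_\varepsilon)|\nabla w_\varepsilon|^2$ you invoke ``$|\nabla w_\varepsilon|^2\to|\nabla w|^2$ a.e.\ (from \eqref{int-4.79})''. But \eqref{int-4.79} asserts only $w_\varepsilon\to w$ a.e., which by itself says nothing about a.e.\ convergence of the gradients; weak convergence $\nabla w_\varepsilon\rightharpoonup\nabla w$ in $L^4_{\mathrm{loc}}$ from \eqref{int-4.87} is likewise insufficient for Fatou applied to such a product. To repair this, combine the bounds already at your disposal: by Lemma \ref{lemma-4.2} the family $(w_\varepsilon)$ is bounded in $L^2_{\mathrm{loc}}([0,\infty);H^2(\Omega))$, and by Lemma \ref{lemma-4.5} the family $(w_{\varepsilon t})$ is bounded in $L^2_{\mathrm{loc}}([0,\infty);L^2(\Omega))$. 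The Aubin--Lions lemma then yields relative compactness of $(w_\varepsilon)$ in $L^2_{\mathrm{loc}}([0,\infty);H^1(\Omega))$, hence (along a further subsequence) $\nabla w_\varepsilon\to\nabla w$ strongly in $L^2_{\mathrm{loc}}$ and a.e.\ in $\Omega\times(0,\infty)$. With this in hand, your Fatou argument for $\int F_\varepsilon(u_\varepsilon)|\nabla w_\varepsilon|^2$ and $\int F_\varepsilon(v_\varepsilon)|\nabla w_\varepsilon|^2$ goes through, and the rest of your outline stands.
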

\begin{lemma}\label{lemma-4.11} The    global weak solution of \eqref{PPT} constructed in Lemma \ref{lemma-4.8} fulfills
\be\label{int-4.63}
 w(\cdot,t)\rightarrow 0 \ \ in \ L^\infty(\Omega) \ as \ t\rightarrow\infty.
\ee
\end{lemma}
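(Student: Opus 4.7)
The strategy is to first establish $L^2$-decay of $w$ and then upgrade this to $L^\infty$-decay via a Neumann heat-semigroup comparison argument.

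For the $L^2$-decay, I would test the $w_\varepsilon$-equation against $1$ and $w_\varepsilon$ and integrate by parts with the no-flux condition to obtain
\begin{align*}
\frac{d}{dt}\int_\Omega w_\varepsilon &= -\int_\Omega(\alpha F_\varepsilon(u_\varepsilon)+\beta F_\varepsilon(v_\varepsilon))w_\varepsilon\leq 0,\\
\frac{1}{2}\frac{d}{dt}\int_\Omega w_\varepsilon^{2}+\int_\Omega|\nabla w_\varepsilon|^2+\int_\Omega(\alpha F_\varepsilon(u_\varepsilon)+\beta F_\varepsilon(v_\varepsilon))w_\varepsilon^{2} &= 0,
\end{align*}
so that $\int_0^\infty\|\nabla w_\varepsilon\|_{L^2}^{2}\leq\tfrac{1}{2}\|w_0\|_{L^2}^{2}$ uniformly in $\varepsilon$, while \eqref{reg-2.18} furnishes $\int_0^\infty\int_\Omega(\alpha F_\varepsilon(u_\varepsilon)+\beta F_\varepsilon(v_\varepsilon))w_\varepsilon\leq\int_\Omega w_0$. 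Passing $\varepsilon\to 0$ via Lemma~\ref{lemma-4.8} (weak convergence of $\nabla w_\varepsilon$ in $L^4$, a.e.\ convergence, Fatou) transfers both bounds to the limit triple $(u,v,w)$. By Lemma~\ref{lemma-4.10} both $\bar w(t):=|\Omega|^{-1}\int_\Omega w(\cdot,t)$ and $\|w(\cdot,t)\|_{L^2}$ are non-increasing, so set $\ell_0:=\lim_{t\to\infty}\bar w(t)\geq 0$; the goal is to show $\ell_0=0$.

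Combining a mean-value selection over $[t,t+1]$ with the bound $\int_t^{t+1}\int_\Omega u^{(n+2)/n}\leq K_5$ of Lemma~\ref{lemma-4.3}, I can extract $t_n\to\infty$ along which, simultaneously,
\[
\|\nabla w(\cdot,t_n)\|_{L^2}\to 0,\quad\int_\Omega(\alpha u+\beta v)w(\cdot,t_n)\to 0,\quad \|u(\cdot,t_n)\|_{L^{(n+2)/n}}+\|v(\cdot,t_n)\|_{L^{(n+2)/n}}\leq C.
\]
Poincar\'e's inequality then gives $\|w(\cdot,t_n)-\bar w(t_n)\|_{L^2}\to 0$, and interpolating with the uniform bound $\|w\|_{L^\infty}\leq\|w_0\|_{L^\infty}$ upgrades this to $\|w(\cdot,t_n)-\bar w(t_n)\|_{L^{(n+2)/2}}\to 0$, the H\"older-conjugate exponent of $(n+2)/n$. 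The H\"older pairing $L^{(n+2)/n}\times L^{(n+2)/2}$ together with the conservations $\|u(\cdot,t)\|_{L^1}=\|u_0\|_{L^1}$ and $\|v(\cdot,t)\|_{L^1}=\|v_0\|_{L^1}$ then identifies
\[
\int_\Omega(\alpha u+\beta v)w(\cdot,t_n)\longrightarrow \ell_0\bigl(\alpha\|u_0\|_{L^1}+\beta\|v_0\|_{L^1}\bigr),
\]
which forces $\ell_0=0$ because $u_0,v_0\not\equiv 0$. Consequently $w(\cdot,t_n)\to 0$ in $L^2(\Omega)$, and monotonicity of $\|w(\cdot,t)\|_{L^2}$ propagates this to $\|w(\cdot,t)\|_{L^2}\to 0$ as $t\to\infty$.

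For the $L^\infty$-upgrade, since $F_\varepsilon\geq 0$ each regularized $w_\varepsilon$ satisfies $w_{\varepsilon t}-\Delta w_\varepsilon\leq 0$ under the homogeneous Neumann condition, so the parabolic comparison principle yields $w_\varepsilon(\cdot,s+1)\leq e^{\Delta_N}w_\varepsilon(\cdot,s)$ pointwise, where $e^{\tau\Delta_N}$ denotes the Neumann heat semigroup. The standard smoothing bound $\|e^{\Delta_N}f\|_{L^\infty}\leq C\|f\|_{L^2}$ then gives $\|w_\varepsilon(\cdot,s+1)\|_{L^\infty}\leq C\|w_\varepsilon(\cdot,s)\|_{L^2}$, and passing $\varepsilon\to 0$ through the a.e.\ convergence of Lemma~\ref{lemma-4.8} and Fatou delivers $\|w(\cdot,s+1)\|_{L^\infty}\leq C\|w(\cdot,s)\|_{L^2}\to 0$ as $s\to\infty$, which is \eqref{int-4.63}. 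The main obstacle is the simultaneous subsequence extraction in the $L^2$-stage together with the rigorous identification of $\lim_n\int_\Omega(\alpha u+\beta v)w(\cdot,t_n)$: matching the integrability of $u,v$ from Lemma~\ref{lemma-4.3} against the $L^2\cap L^\infty$ control of $w$ via the H\"older conjugate pair is what makes the mass-balance identity close; once $\ell_0=0$ is in hand, the $L^\infty$-upgrade reduces to a single smoothing step.
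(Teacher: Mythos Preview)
Your argument is correct and follows a genuinely different route from the paper's. The paper works along a time sequence $t_k\to\infty$ chosen so that $(w(\cdot,t_k))_k$ is bounded in $W^{1,4}(\Omega)$, invokes the compact embedding $W^{1,4}\hookrightarrow L^q$, and then uses the time-derivative bound $\int_0^\infty\int_\Omega w_t^2<\infty$ (Lemma~\ref{lemma-4.5}) together with the decay $\int_0^\infty\|u-\bar u_0\|_{L^{n/(n-1)}}^2<\infty$ (Lemma~\ref{lemma-4.4}) to identify $\int_{t_k}^{t_k+1}\int_\Omega(\alpha u+\beta v)w\to(\alpha\bar u_0+\beta\bar v_0)\int_\Omega w_\infty$; the global-in-time bound \eqref{reg-2.19} then forces $w_\infty=0$, and a level-set argument upgrades the resulting $L^q$-decay to $L^\infty$. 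Your approach bypasses Lemmas~\ref{lemma-4.4} and~\ref{lemma-4.5} entirely: the simple $L^2$-energy identity for $w_\varepsilon$ already gives $\int_0^\infty\|\nabla w\|_{L^2}^2<\infty$ without convexity, and your Poincar\'e-plus-interpolation step, paired via H\"older with the $L^{(n+2)/n}$ bound of Lemma~\ref{lemma-4.3}, closes the mass-balance identification directly at individual times $t_n$ rather than on unit intervals. Your heat-semigroup comparison for the $L^\infty$-upgrade is also cleaner than the paper's level-set argument, which tacitly needs more than $\|w(\cdot,t)\|_{L^\infty}\ge a$ to rule out $|\Omega(t)|\to 0$ when $n=4,5$. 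The trade-off is that the paper's machinery (Lemmas~\ref{lemma-4.4}--\ref{lemma-4.6}) is reused later for the stabilization of $u$ and $v$ in Lemma~\ref{lemma-4.15}, whereas your more self-contained argument here does not feed into that step.
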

\begin{proof}
We   shall  extend the arguments in   \cite[Lemma 5.4]{TW12-JDE} (with a small flaw)  for $n=3$ to higher dimensional cases. Notice from Lemmas \ref{lemma-2.5} and \ref{lemma-4.2} that
$$
\|w\|_{L^\infty(\Omega\times(0,\infty))}+\int_t^{t+1}\int_\Omega |\nabla w|^4\leq C_1,  \quad \quad \forall t\geq 0.
$$
This ensures the existence of a sequence times $t_k\rightarrow\infty$ with $1+t_k\leq t_{k+1}\leq 2+t_k$ such that $(w(\cdot, t_k))_{k\in\mathbb{N}}$ is bounded in $W^{1,4}(\Omega)$. This together with  the compact embedding from $W^{1,4}(\Omega)$ into $L^q(\Omega)$ for $q$ satisfying \eqref{q-exponent}   allows  us to deduce, up to a subsequence, for some nonnegative function $w_\infty$, that
\be\label{w-lim1}
w(\cdot, t_k)\rightarrow w_\infty \ \ in \ L^q(\Omega) \ as \ k\rightarrow\infty.
\ee
Using  Cauchy-Schwarz inequality, we infer that
\begin{align*}
&\int_{t_k}^{1+t_k}\int_\Omega \left|w_\varepsilon(\cdot, t)-w_\varepsilon(\cdot, t_k)\right|^2\\
&=\int_{t_k}^{1+t_k}\int_\Omega \left(\int_{t_k}^tw_{\varepsilon t}(\cdot, s)\right)^2\leq  \int_{t_k}^{1+t_k}\int_\Omega  w_{\varepsilon t}^2(\cdot, t), \quad  \forall \varepsilon\in (0, 1).
\end{align*}
In view of  Lemma \ref{lemma-4.5}, upon passing to the limit $\varepsilon=\varepsilon_j\searrow0$, this gives rise to
$$
\int_{t_k}^{1+t_k}\int_\Omega \left|w(\cdot, t)-w(\cdot, t_k)\right|^2
\leq  \int_{t_k}^{1+t_k}\int_\Omega  w_{t}^2(\cdot, t)\rightarrow 0 \text{  as }  k\rightarrow \infty.
$$
This together with \eqref{w-lim1} with $q=2$  implies that
\be\label{w-lim2}
\begin{split}
&\frac{1}{2}\int_{t_k}^{1+t_k}\int_\Omega \left|w(\cdot, t)-w_\infty\right|^2\\
&\leq \|w(\cdot, t_k)-w_\infty\|_{L^2}^2+\int_{t_k}^{1+t_k}\int_\Omega \left|w(\cdot, t)-w(\cdot, t_k)\right|^2\rightarrow 0 \text{  as }  k\rightarrow \infty.
\end{split}
\ee
Extracting  Lemma \ref{lemma-4.4} from Lemma \ref{lemma-4.10}, we have
\be\label{w-lim3}
\int_{t_k}^{1+t_k}\left(\|u-\bar{u}_0\|^2_{L^\frac{n}{n-1}}
+\|v-\bar{v}_0\|^2_{L^\frac{n}{n-1}}\right)\rightarrow 0 \text{  as }  k\rightarrow \infty.
\ee
Since  $w_{\varepsilon}\leq \|w_0\|_{L^\infty}$, we use  H\"{o}lder's inequality to estimate, for all $k\in\mathbb{N}$, that
\begin{align*}
&\int_{t_k}^{1+t_k}\int_\Omega \left|\left(\alpha u+\beta v\right)w-\left(\alpha \bar{u}_0+\beta \bar{v}_0\right)w_\infty\right|\\
&\leq \int_{t_k}^{1+t_k}\int_\Omega \left(\alpha |u-\bar{u}_0|+\beta|v-\bar{v}_0|\right)w
+\left(\alpha \bar{u}_0+\beta \bar{v_0}\right)\int_{t_k}^{1+t_k}\int_\Omega|w-w_\infty| \\
&\leq  \left[\alpha \left(\int_{t_k}^{1+t_k}\|u-\bar{u}_0\|^2_{L^\frac{n}{n-1}}\right)^\frac{1}{2}
+ \beta \left(\int_{t_k}^{1+t_k}\|v-\bar{v}_0\|^2_{L^\frac{n}{n-1}}\right)^\frac{1}{2}\right]
\left(\int_{t_k}^{1+t_k}\|w\|^2_{L^n}\right)^\frac{1}{2}\\
&\ \ + \left(\alpha \bar{u}_0+\beta \bar{v_0}\right)|\Omega|^\frac{1}{2}
\left(\int_{t_k}^{1+t_k}\|w-w_\infty\|^2_{L^2}\right)^\frac{1}{2}\\
&\ \leq  \left[\alpha \left(\int_{t_k}^{1+t_k}\|u-\bar{u}_0\|^2_{L^\frac{n}{n-1}}\right)^\frac{1}{2}
+ \beta \left(\int_{t_k}^{1+t_k}\|v-\bar{v}_0\|^2_{L^\frac{n}{n-1}}\right)^\frac{1}{2}\right]
\|w_0\|_{L^\infty}|\Omega|^\frac{1}{2}\\
&\ \ + \left(\alpha \bar{u}_0+\beta \bar{v_0}\right)|\Omega|^\frac{1}{2}
\left(\int_{t_k}^{1+t_k}\|w-w_\infty\|^2_{L^2}\right)^\frac{1}{2},
\end{align*}
and so, we obtain from \eqref{w-lim2} and \eqref{w-lim3} that
\be\label{w-lim4}
\int_{t_k}^{1+t_k}\int_\Omega  \left(\alpha u+\beta v\right)w\rightarrow  \left(\alpha \bar{u}_0+\beta \bar{v}_0\right)\int_\Omega  w_\infty\  \  \text{ as }  k\rightarrow\infty.
\ee
On the other hand, by \eqref{reg-2.19} and the fact $1+t_k\leq t_{k+1}$, we have
$$
\sum_{k=1}^\infty\int_{t_k}^{1+t_k}\int_\Omega  \left(\alpha u+\beta v\right)w\leq \int_0^\infty\int_\Omega  \left(\alpha u+\beta v\right)w<+\infty.
$$
Since  $\bar{u}_0, \bar{v}_0>0$,  this couples with \eqref{w-lim4} implies  $w_\infty\equiv 0$, and so \eqref{w-lim1} becomes
$$
w(\cdot, t_k)\rightarrow 0 \ \ in \ L^q(\Omega) \ as \ k\rightarrow\infty.
$$
This together with the fact that   $t\mapsto \|w(\cdot, t)\|_{L^q}$ is non-increasing by  Lemma   \ref{lemma-2.5} shows actually that
\be\label{w-lim5}
w(\cdot, t)\rightarrow 0 \ \ in \ L^q(\Omega) \ as \ k\rightarrow\infty.
\ee
Since $\|w(\cdot, t)\|_{L^\infty}$ is non-increasing in $t$ and is nonnegative, we see, as $t\rightarrow \infty$,  that $\|w(\cdot, t)\|_{L^\infty}$ converges decreasingly to some  $a\geq 0$. If $a>0$, then, for any $\eta\in (0, a)$ and $t>0$,  we define $\Omega(t)=\{x\in \Omega: w(x,t)\geq a-\eta\}$, and thus we get
$$
\|w(\cdot, t)\|_{L^q}=\left(\int_\Omega w^q(\cdot,t)\right)^\frac{1}{q}\geq (a-\eta)|\Omega(t)|^\frac{1}{q},
$$
which couples with \eqref{w-lim5} immediately yields $\lim_{t\rightarrow \infty}|\Omega(t)|=0$. While, this is incompatible with the fact that  $ \|w(\cdot, t)\|_{L^\infty}\geq a$ for all $t\geq0$. Therefore, we must have $a=0$, and so \eqref{int-4.63} follows. \end{proof}

\begin{lemma}\label{lemma-4.12} For $n\in\{3,4,5\}$ and for any $\delta>0$, there exist $t_0(\delta)>0$ and $\varepsilon_0(\delta)\in(0,1)$ such that for all $\varepsilon\in(\varepsilon_j)_{j\in\mathbb{N}}$ satisfying $\varepsilon<\varepsilon_0(\delta)$, the $w$-component of the global  classical solution of \eqref{reg-2.11} fulfills
\be\label{int-4.64}
0\leq  w_{\varepsilon}  \leq \delta  \text{ on  } \Omega\times ( t_0(\delta), \infty).
\ee
\end{lemma}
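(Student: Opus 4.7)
The plan is to combine three ingredients already available: the $L^\infty$-decay of the limit function $w$ (Lemma \ref{lemma-4.11}), the strong convergence $w_\varepsilon \to w$ in $L^\infty_{\mathrm{loc}}([0,\infty); L^2(\Omega))$ (Lemma \ref{lemma-4.9}), and the monotonicity of $t\mapsto\|w_\varepsilon(\cdot,t)\|_{L^\infty(\Omega)}$ (Lemma \ref{lemma-2.5} with $p=\infty$), together with a parabolic comparison principle and Neumann heat semigroup smoothing.

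First, given $\delta>0$, I would invoke Lemma \ref{lemma-4.11} to pick $T_1 = T_1(\delta) > 0$ so large that $\|w(\cdot, T_1)\|_{L^\infty(\Omega)}$, and hence $\|w(\cdot, T_1)\|_{L^2(\Omega)} \leq |\Omega|^{1/2} \|w(\cdot, T_1)\|_{L^\infty(\Omega)}$, is below $\delta/(4 C_{\mathrm{sg}})$, where $C_{\mathrm{sg}}$ is the Neumann semigroup smoothing constant appearing below. Since Lemma \ref{lemma-4.9} furnishes $w_\varepsilon(\cdot, T_1) \to w(\cdot, T_1)$ in $L^2(\Omega)$ as $\varepsilon = \varepsilon_j \searrow 0$, one can then select $\varepsilon_0(\delta) \in (0,1)$ so that $\|w_\varepsilon(\cdot, T_1)\|_{L^2(\Omega)} \leq \delta/(2 C_{\mathrm{sg}})$ for every $\varepsilon = \varepsilon_j$ with $\varepsilon < \varepsilon_0(\delta)$.

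Next, since $F_\varepsilon(u_\varepsilon), F_\varepsilon(v_\varepsilon), w_\varepsilon \geq 0$, the third equation of \eqref{reg-2.11} gives the differential inequality $w_{\varepsilon t} \leq \Delta w_\varepsilon$ under homogeneous Neumann boundary data. By the parabolic comparison principle, this yields the pointwise sub-solution estimate $w_\varepsilon(\cdot, T_1+1) \leq e^{\Delta_N} w_\varepsilon(\cdot, T_1)$, where $e^{t\Delta_N}$ denotes the Neumann heat semigroup on $\Omega$. Standard Neumann heat kernel smoothing on a fixed bounded smooth domain gives $\|e^{\Delta_N} f\|_{L^\infty(\Omega)} \leq C_{\mathrm{sg}} \|f\|_{L^2(\Omega)}$ with $C_{\mathrm{sg}} = C_{\mathrm{sg}}(\Omega, n)$ independent of $\varepsilon$. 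Combined with the previous step, this produces $\|w_\varepsilon(\cdot, T_1+1)\|_{L^\infty(\Omega)} \leq \delta$.

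Finally, setting $t_0(\delta) := T_1 + 1$, the monotonicity of $t\mapsto \|w_\varepsilon(\cdot,t)\|_{L^\infty(\Omega)}$ furnished by Lemma \ref{lemma-2.5} propagates this $L^\infty$ bound at $t_0$ to all later times, delivering the desired estimate $0 \leq w_\varepsilon \leq \delta$ on $\Omega \times (t_0(\delta), \infty)$ for every $\varepsilon = \varepsilon_j$ with $\varepsilon < \varepsilon_0(\delta)$. The only point requiring minor care is the $\varepsilon$-independence of $C_{\mathrm{sg}}$, which is transparent since the Neumann Laplacian is posed on the fixed smooth domain $\Omega$; no serious obstacle is anticipated.
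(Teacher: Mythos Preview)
Your proof is correct but follows a genuinely different route from the paper.  The paper first establishes uniform $L^q$-smallness of $w_\varepsilon$ for all sufficiently large times by combining Lemma~\ref{lemma-4.11}, the a.e.-in-time $L^q$-convergence \eqref{int-4.86} from Lemma~\ref{lemma-4.8}, and the monotonicity of $t\mapsto\|w_\varepsilon(\cdot,t)\|_{L^q}$; it then upgrades $L^q$-smallness to $L^\infty$-smallness by a contradiction argument based on the measures of the superlevel sets $\{w_\varepsilon(\cdot,t)\geq a-\eta\}$.  You instead convert $L^2$-smallness at a single time $T_1$ directly into $L^\infty$-smallness at $T_1+1$ via the sub-solution inequality $w_{\varepsilon t}\leq\Delta w_\varepsilon$, the comparison bound $w_\varepsilon(\cdot,T_1+1)\leq e^{\Delta_N}w_\varepsilon(\cdot,T_1)$, and the $L^2$--$L^\infty$ smoothing estimate for the Neumann heat semigroup, before propagating forward by the monotonicity of $\|w_\varepsilon(\cdot,t)\|_{L^\infty}$.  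Your argument is more direct and quantitative, bypassing the measure-theoretic detour, and it exploits the stronger convergence furnished by Lemma~\ref{lemma-4.9} (convergence in $L^2$ at \emph{every} time, not merely a.e.\ time).  The paper's route, on the other hand, avoids invoking the comparison principle and semigroup smoothing, relying only on soft monotonicity and convergence properties already in hand.
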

\begin{proof}
For given $\delta>0$ and $q$ satisfying \eqref{q-exponent}, it follows from Lemma \ref{lemma-4.11} there exists $\hat{t}_0>0$ such that the limit $w$ defined by Lemma \eqref{lemma-4.8} fulfills $\|w(\cdot, t)\|_{L^q}\leq\delta/2$ for $t>\hat{t}_0$. Thanks to  \eqref{int-4.86},  there exists $t_0\in(\hat{t}_0,\hat{t}_0+1)$ such that
$w_{\varepsilon}(\cdot,t_0)\rightarrow w(\cdot,t_0)$ in $L^q(\Omega)$ as $\varepsilon=\varepsilon_j\searrow0$. This joined  with the fact from Lemma \ref{lemma-2.5}  that $t\mapsto \|w(\cdot, t)\|_{L^q}$ is non-increasing ensures   there exists $\varepsilon_0(\delta)>0$ such that, for $t\geq t_0$ and $\varepsilon\in(\varepsilon_j)_{j\in\mathbb{N}}$ with $\varepsilon<\varepsilon_0(\delta)$,
\be\label{w-lim6}
\begin{split}
\|w_{\varepsilon}(\cdot,t)\|_{L^q}\leq \|w_{\varepsilon}(\cdot,t_0)\|_{L^q}&\leq\|w_{\varepsilon}(\cdot,t_0)-w(\cdot,t_0)\|_{L^q}
+\|w(\cdot,t_0)\|_{L^q}\\
&\ \leq \frac{\delta}{2}+ \|w(\cdot,t_0)\|_{L^q}\leq\delta.
\end{split}
\ee
Now, let $a=\limsup_{(\varepsilon, t)\rightarrow (0, \infty)}\|w_\varepsilon(\cdot,t)\|_{L^\infty}$; if $a>0$, then, for any $\eta\in(0, a)$, we define  $E_t^\varepsilon=\{x\in \Omega: w_\varepsilon(x,t)\geq a-\eta\}$, and then we derive
$$
\|w_\varepsilon(\cdot, t)\|_{L^q}=\left(\int_\Omega w_\varepsilon^q(\cdot,t)\right)^\frac{1}{q}\geq (a-\eta)|E_t^\varepsilon|^\frac{1}{q},
$$
which couples with \eqref{w-lim6} immediately yields $\limsup_{(\varepsilon, t)\rightarrow (0, \infty)}|E_t^\varepsilon|=0$. While, this is contradictory to the definitions of $a$ and $E_t^\varepsilon$, and therefore, we must have $a=0$, and so the  decay estimate \eqref{int-4.64} follows.
\end{proof}

\begin{lemma}\label{lemma-4.13} For $n\in\{3,4,5\}$ and   $p\in(1,\infty)$, there  exist  $t_1(p)>0$, $\varepsilon_1(p)\in(0,1)$ and $K_{13}(p)=K_{13}(u_0, v_0, w_0, p)>0$  such that for $\varepsilon\in(\varepsilon_j)_{j\in\mathbb{N}}$ with $\varepsilon<\varepsilon_1(p)$, the global  classical solution of \eqref{reg-2.11}  satisfies
\be\label{int-4.65}
\int_{\Omega}u^p_{\varepsilon}(\cdot,t)+\int_{\Omega}v^p_{\varepsilon}(\cdot,t)\leq K_{13}(p),  \ \ \ \forall t\geq t_1(p).
\ee
\end{lemma}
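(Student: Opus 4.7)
The strategy is to exploit the eventual uniform smallness of $\|w_\varepsilon(\cdot,t)\|_{L^\infty(\Omega)}$ provided by Lemma \ref{lemma-4.12} together with a Gagliardo--Nirenberg closure based on the conserved $L^1$-mass \eqref{reg-2.15}. Fix $p\in(1,\infty)$ and pick $\delta>0$ small (to be quantified below); Lemma \ref{lemma-4.12} then yields $t_0 = t_0(\delta)$ and $\varepsilon_0 = \varepsilon_0(\delta)$ so that $0 \le w_\varepsilon \le \delta$ on $\Omega \times (t_0, \infty)$ for every $\varepsilon \in (\varepsilon_j)_{j\in\N}$ with $\varepsilon < \varepsilon_0$. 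Testing the $u_\varepsilon$-equation of \eqref{reg-2.11} against $p u_\varepsilon^{p-1}$, integrating by parts, and using $F'_\varepsilon \le 1$ combined with Young's inequality yields
\begin{equation*}
\frac{d}{dt}\int_\Omega u_\varepsilon^p + \frac{2(p-1)}{p}\int_\Omega |\nabla u_\varepsilon^{p/2}|^2 \le \frac{p(p-1)\chi_1^2}{2}\int_\Omega u_\varepsilon^p |\nabla w_\varepsilon|^2,
\end{equation*}
and a parallel bound for $v_\varepsilon$ with $\chi_2$. Everything hinges on controlling $\int_\Omega u_\varepsilon^p |\nabla w_\varepsilon|^2$ by a quantity involving arbitrarily small prefactors.

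The central manipulation begins with one integration by parts in space,
\begin{equation*}
\int_\Omega u_\varepsilon^p|\nabla w_\varepsilon|^2 = -\int_\Omega u_\varepsilon^p w_\varepsilon \Delta w_\varepsilon - p\int_\Omega u_\varepsilon^{p-1}w_\varepsilon \nabla u_\varepsilon\cdot \nabla w_\varepsilon,
\end{equation*}
followed by the substitution $\Delta w_\varepsilon = w_{\varepsilon t} + (\alpha F_\varepsilon(u_\varepsilon) + \beta F_\varepsilon(v_\varepsilon))w_\varepsilon$ from the $w_\varepsilon$-equation; the absorption contribution $-\int_\Omega u_\varepsilon^p w_\varepsilon^2(\alpha F_\varepsilon+\beta F_\varepsilon)$ has a favorable sign and is discarded. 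The residual parabolic piece is rewritten via integration by parts in time as $-\int_\Omega u_\varepsilon^p w_\varepsilon w_{\varepsilon t} = -\tfrac12\tfrac{d}{dt}\int_\Omega u_\varepsilon^p w_\varepsilon^2 + \tfrac{p}{2}\int_\Omega u_\varepsilon^{p-1} u_{\varepsilon t} w_\varepsilon^2$, and then $u_{\varepsilon t}$ is expanded by the $u_\varepsilon$-equation with two additional spatial integrations by parts. The decisive structural observation is that \emph{every} integrand produced by this procedure retains at least one pointwise factor of $w_\varepsilon \le \delta$ or $w_\varepsilon^2 \le \delta^2$; applying Young's inequality to each cross integrand of the form $\int_\Omega u_\varepsilon^{p-1}(\text{weight}) \nabla u_\varepsilon \cdot \nabla w_\varepsilon$ gives a bound by $\eta \int_\Omega u_\varepsilon^p|\nabla w_\varepsilon|^2 + O(\delta^2)\, C_\eta \int_\Omega u_\varepsilon^{p-2}|\nabla u_\varepsilon|^2$, while integrands of the form $\int_\Omega u_\varepsilon^p w_\varepsilon (\text{weight}) |\nabla w_\varepsilon|^2$ inherit an $O(\delta)$ factor outright. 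Using $\int_\Omega u_\varepsilon^{p-2}|\nabla u_\varepsilon|^2 = \tfrac{4}{p^2}\int_\Omega |\nabla u_\varepsilon^{p/2}|^2$ and choosing $\eta, \delta$ small enough that the total $O(\delta) + O(\eta)$ coefficient of $\int_\Omega u_\varepsilon^p|\nabla w_\varepsilon|^2$ stays strictly below $1$, this bad quantity is absorbed into the left, yielding
\begin{equation*}
\int_\Omega u_\varepsilon^p|\nabla w_\varepsilon|^2 \le -c_1 \frac{d}{dt}\int_\Omega u_\varepsilon^p w_\varepsilon^2 + \kappa(\delta) \int_\Omega |\nabla u_\varepsilon^{p/2}|^2, \qquad \kappa(\delta) \to 0 \text{ as } \delta\searrow 0.
\end{equation*}

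Plugging this back into the starting estimate and shrinking $\delta$ further to absorb the remaining dissipation loss (and repeating symmetrically for $v_\varepsilon$), one arrives at an ODI of the form $Y_\varepsilon'(t) + c_p \int_\Omega \bigl(|\nabla u_\varepsilon^{p/2}|^2 + |\nabla v_\varepsilon^{p/2}|^2\bigr) \le 0$ for $t > t_0(\delta)$, where $Y_\varepsilon(t) = \int_\Omega(u_\varepsilon^p + v_\varepsilon^p) + (\text{$O(\delta^2)$ correction involving $w_\varepsilon^2$})$ is comparable to $\int_\Omega(u_\varepsilon^p + v_\varepsilon^p)$. Finally, applying Lemma \ref{GN-inter} to $u_\varepsilon^{p/2}$ with reference exponent $2/p$ (so that $\|u_\varepsilon^{p/2}\|_{L^{2/p}} = \|u_0\|_{L^1}^{p/2}$ by \eqref{reg-2.15}) gives
\begin{equation*}
\int_\Omega u_\varepsilon^p \le \tilde C_1 \|\nabla u_\varepsilon^{p/2}\|_{L^2}^{2\theta} + \tilde C_2, \qquad \theta = \frac{(p-1)n}{np+2-n} \in (0,1),
\end{equation*}
and similarly for $v_\varepsilon$; combining with the ODI and the power-mean inequality produces the superlinear ODI $Y_\varepsilon'(t) + c_p' Y_\varepsilon(t)^{1/\theta} \le C_p'$ with $\varepsilon$-independent $c_p', C_p'$. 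Since $1/\theta > 1$, comparison with the autonomous ODE $z' = -c_p'z^{1/\theta}/2$ valid for $z \ge (2C_p'/c_p')^\theta$ yields the $\varepsilon$-independent bound $Y_\varepsilon(t) \le \max\bigl\{(2C_p'/c_p')^\theta,\; (c_p'\mu(t-t_0(\delta))/2)^{-\theta/(1-\theta)}\bigr\}$ with $\mu = 1/\theta - 1$. Choosing $t_1(p) > t_0(\delta)$ so that the second term in the maximum falls below the first, and setting $K_{13}(p)$ as an appropriate multiple of $(2C_p'/c_p')^\theta$ via the $Y_\varepsilon$-comparability constant, delivers \eqref{int-4.65}.

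\textbf{Main obstacle.} The delicate step is the term-by-term bookkeeping in the middle paragraph: one must verify that every one of the numerous integrands produced by the successive substitutions of the $w_\varepsilon$- and $u_\varepsilon$-equations, together with the chain of spatial and temporal integrations by parts, indeed retains at least one pointwise $w_\varepsilon$-weight, so that Young's inequality delivers absorbable $O(\delta)$ or $O(\delta^2)$ prefactors. The pivotal structural fact is that the problematic quantity $\int_\Omega u_\varepsilon^p|\nabla w_\varepsilon|^2$ never reappears naked in the expansion, always carrying a $w_\varepsilon$-weight that can be traded for smallness; this is precisely what allows the eventual $L^\infty$-smallness of $w_\varepsilon$ from Lemma \ref{lemma-4.12} to close the argument.
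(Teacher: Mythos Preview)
Your argument is correct but follows a genuinely different route from the paper's. The paper uses the Tao--Winkler weighted-energy technique: it fixes $r\in(0,p-1)$, introduces the singular weight $\phi(s)=(2\delta-s)^{-r}$, and computes $\frac{d}{dt}\int_\Omega(u_\varepsilon^p+v_\varepsilon^p)\phi(w_\varepsilon)$ directly. The specific algebraic properties of $\phi,\phi',\phi''$ are then exploited so that, once $\delta$ is small enough (via Lemma \ref{lemma-4.12}), a purely pointwise quadratic-form inequality absorbs the cross terms $I_4,I_5$ into the dissipation, yielding in one stroke the ODI $\frac{d}{dt}\int_\Omega(u_\varepsilon^p+v_\varepsilon^p)\phi(w_\varepsilon)+A\int_\Omega(u_\varepsilon^{p-2}|\nabla u_\varepsilon|^2+v_\varepsilon^{p-2}|\nabla v_\varepsilon|^2)\le 0$; the Gagliardo--Nirenberg closure and the superlinear ODE comparison then proceed exactly as you do.

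Your approach trades the clever choice of weight for repeated substitution of the $w_\varepsilon$- and $u_\varepsilon$-equations, producing the correction term $\int_\Omega u_\varepsilon^p w_\varepsilon^2$ in the modified energy $Y_\varepsilon$ rather than the full weight $\phi(w_\varepsilon)$. What you gain is that no ansatz for $\phi$ is needed and the mechanism (every surviving integrand carries a factor $w_\varepsilon$) is transparent; what you lose is brevity, since the paper's computation \eqref{w-lim7}--\eqref{w-lim8} handles everything in a single differentiation, while your scheme requires tracking several rounds of integration by parts in space and time. Both ultimately rest on the same input, Lemma \ref{lemma-4.12}, and reach the same superlinear ODI with the same Gagliardo--Nirenberg exponent $\theta=\frac{(p-1)n}{np+2-n}$.
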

\begin{proof}
With Lemmas \ref{lemma-4.11} and \ref{lemma-4.12} at hand, we can extend the arguments in   \cite[Lemma 6.2]{TW12-JDE} for $n=3$ to its higher dimensional cases \eqref{int-4.65}. To this end, for $p\in(1, \infty)$ and for  $r\in (0, p-1)$, we first see that
\begin{align*}
&\left\{0\leq z<\frac{r+1}{p}, \ \ \   p-1-\frac{p}{4r}\cdot\frac{4r^2+(p-1)^2z^2}{r+1
-  pz}>0\right\}\\
&\Longleftrightarrow 0\leq z<\frac{2(p-1-r)\sqrt{pr}}{(p-1)p\left(\sqrt{(p-1)(r+1)}+\sqrt{pr}\right)}
\end{align*}
and
$$
\frac{2(p-1-r)\sqrt{pr}}{(p-1)p\left(\sqrt{(p-1)(r+1)}+\sqrt{pr}\right)}
>\frac{2(p-1-r)r}{p^2\left(p+p\right)}=\frac{(p-1-r)r}{p^3},
$$ and thus, we see, for
\be\label{delta-def}
\begin{split}
\delta&=\min\left\{1, \ \frac{1}{\chi_1}, \  \frac{1}{\chi_2}\right\} \min\left\{ \frac{(p-1-r)r}{2p^3}, \ \frac{r+1}{2p}\right\}\\
& \ =\min\left\{1, \ \frac{1}{\chi_1}, \  \frac{1}{\chi_2}\right\} \frac{(p-1-r)r}{2p^3}>0,
\end{split}
\ee
we have that
\be\label{delta-equ2}
A_i:=p(2\delta)^{-r}\left\{p-1-\frac{p}{4r}\cdot\frac{4r^2+(p-1)^2(2\delta \chi_i)^2}{r+1
-2\delta p\chi_i}\right\}>0, \ \  \ i=1, \ 2.
\ee
With these preparations,  we now specify $\phi$ as follows:
$$
\phi(s)=(2\delta-s)^{-r}, \quad \quad s\in[0,  2\delta).
$$
Now, for    $\varepsilon\in(\varepsilon_j)_{j\in\mathbb{N}}$ satisfying $\varepsilon<\varepsilon_0(\delta)$ and $t>t_0(\delta)$  with  $\varepsilon_0(\delta)$ and $t_0(\delta)$ prescribed by Lemma \ref{lemma-4.12}, we see that $\phi(w_\varepsilon)$ is well-defined,  and, moreover,  we use    integration by parts to compute from \eqref{reg-2.11}  that
\be\label{w-lim7}
\begin{split}
&\frac{d}{dt}\int_\Omega \left(u_\varepsilon^p+v_\varepsilon^p\right)\phi(w_\varepsilon)\\
&\ =-(p-1)p\int_\Omega \left(u_\varepsilon^{p-2}|\nabla u_\varepsilon|^2+v_\varepsilon^{p-2}|\nabla v_\varepsilon|^2\right)\phi(w_\varepsilon)\\
&\ \   -\int_\Omega \left[\phi''(w_\varepsilon)-p\chi_1F_\varepsilon'(u_\varepsilon) \phi'(w_\varepsilon) \right]u_\varepsilon^p|\nabla w_\varepsilon|^2\\
&\ \  -\int_\Omega \left[\phi''(w_\varepsilon)-p\chi_2 F_\varepsilon'(v_\varepsilon) \phi'(w_\varepsilon) \right]v_\varepsilon^p|\nabla w_\varepsilon|^2\\
&\ \  +p\int_\Omega u_\varepsilon^{p-1}\left[-2\phi'(w_\varepsilon)+(p-1)\chi_1 F_\varepsilon'(u_\varepsilon) \phi(w_\varepsilon) \right] \nabla u_\varepsilon \nabla w_\varepsilon\\
& \ \   +p\int_\Omega v_\varepsilon^{p-1}\left[-2\phi'(w_\varepsilon)+(p-1)\chi_2 F_\varepsilon'(v_\varepsilon) \phi(w_\varepsilon) \right] \nabla v_\varepsilon \nabla w_\varepsilon\\
&\ \  -\int_\Omega \left[\alpha F_\varepsilon(u_\varepsilon)+\beta F_\varepsilon(v_\varepsilon)\right]\left(u_\varepsilon^p +v_\varepsilon^p\right)\phi'(w_\varepsilon)\\
\ \  &=:I_1+I_2+I_3+I_4+I_5+I_6.
\end{split}
\ee
By the nonnegativity  of $u_\varepsilon,v_\varepsilon,w_\varepsilon, F_\varepsilon, \phi'$,  we first see obviously  that $I_6\leq 0$, and then,  by the facts $0\leq w_{\varepsilon}\leq \delta$ in \eqref{int-4.64}, $ 0\leq F_\varepsilon'(s)\leq 1$ in  \eqref{reg-2.9} and the choice  of $\delta$ in \eqref{delta-def}, we infer that
\begin{align*}
\phi''(w_\varepsilon)-p\chi_1F_\varepsilon'(u_\varepsilon) \phi'(w_\varepsilon)\geq \phi''(w_\varepsilon)-p\chi_1  \phi'(w_\varepsilon) \geq r(2\delta)^{-r-2}\left(r+1-2p\chi_1\delta\right)>0,\\
\phi''(w_\varepsilon)-p\chi_2F_\varepsilon'(v_\varepsilon) \phi'(w_\varepsilon)\geq \phi''(w_\varepsilon)-p\chi_2  \phi'(w_\varepsilon) \geq r(2\delta)^{-r-2}\left(r+1-2p\chi_2\delta\right)>0.
\end{align*}
Hence, we employ Young's inequality to estimate $I_4$ as
\be\label{I4-est}
I_4\leq -I_2+\frac{p^2}{4}\int_\Omega \frac{\left[-2\phi'(w_\varepsilon)+(p-1)\chi_1 F_\varepsilon'(u_\varepsilon) \phi(w_\varepsilon) \right]^2}{\phi''(w_\varepsilon)-p\chi_1 \phi'(w_\varepsilon)} u_\varepsilon^{p-2} |\nabla u_\varepsilon|^2,
\ee
and then, based on \eqref{w-lim7}, we further use the facts $ 0\leq F_\varepsilon'(s)\leq 1$,  the choices  of $\delta$  and $A_1$ in \eqref{delta-def} and \eqref{delta-equ2} to estimate, for $(x,t,s)\in \Omega\times (t_0(\delta), \infty)\times [0,\delta]$,
\begin{align*}
B_1(x,t,s)&:= p(p-1)\phi(s)-\frac{p^2}{4}\cdot \frac{\left[-2\phi'(s)+(p-1)\chi_1 F_\varepsilon'(u_\varepsilon) \phi(s) \right]^2}{\phi''(s)-p\chi_1 \phi'(s)}\\
&\geq p(p-1)\phi(s)-\frac{p^2}{4}\cdot \frac{4\phi'^2(s)+(p-1)^2\chi_1^2 \phi^2(s)}{\phi''(s)-p\chi_1 \phi'(s)}\\
&=p(2\delta-s)^{-r}\left\{p-1-\frac{p}{4r}\cdot\frac{4r^2+(p-1)^2(2\delta-s)^2\chi_1^2}{r+1
-(2\delta-s)p\chi_1}\right\}\\
&\geq p(2\delta)^{-r}\left\{p-1-\frac{p}{4r}\cdot\frac{4r^2+(p-1)^2(2\delta \chi_1)^2}{r+1
-2\delta p\chi_1}\right\}=A_1>0.
\end{align*}
Combining this with \eqref{I4-est}, we conclude that
\be\label{I4-est2}
I_4\leq -I_2+(p-1)p\int_\Omega  \phi(w_\varepsilon) u_\varepsilon^{p-2} |\nabla u_\varepsilon|^2-A_1\int_\Omega  u_\varepsilon^{p-2} |\nabla u_\varepsilon|^2.
\ee
In the same reasoning, we readily estimate the term $I_5$ as
\be\label{I5-est}
I_5\leq -I_3+(p-1)p\int_\Omega  \phi(w_\varepsilon) v_\varepsilon^{p-2} |\nabla v_\varepsilon|^2-A_2\int_\Omega  v_\varepsilon^{p-2} |\nabla v_\varepsilon|^2.
\ee
Substituting \eqref{I4-est}, \eqref{I4-est2} and \eqref{I5-est} into \eqref{w-lim7}  and recalling $I_6\leq0$, we infer that
\be\label{w-lim8}
\begin{split}
 \frac{d}{dt}\int_\Omega \left(u_\varepsilon^p+v_\varepsilon^p\right)\phi(w_\varepsilon)&\leq -A_1\int_\Omega  u_\varepsilon^{p-2}|\nabla u_\varepsilon|^2-A_2\int_\Omega v_\varepsilon^{p-2}|\nabla v_\varepsilon|^2\\
 &=-\frac{4A_1}{p^2}\int_\Omega  |\nabla u_\varepsilon^\frac{p}{2}|^2-\frac{4A_2}{p^2}\int_\Omega  |\nabla v_\varepsilon^\frac{p}{2}|^2.
 \end{split}
\ee
Next, due to the  mass conservations of $u_\varepsilon$ and $v_\varepsilon$  in \eqref{reg-2.15}, by the GN inequality (cf. Lemma \ref{GN-inter}) and the fact that $\phi(w_\varepsilon)\leq \delta^{-r}$, we infer that
\begin{align*}
\int_\Omega  u_\varepsilon^p \phi(w_\varepsilon)\leq \delta^{-r}\|u_\varepsilon^\frac{p}{2}\|_{L^2}^2&\leq C_1\|\nabla u_\varepsilon^\frac{p}{2}\|_{L^2}^\frac{2n(p-1)}{(p-1)n+2}
\|u_\varepsilon^\frac{p}{2}\|_{L^\frac{2}{p}}^\frac{4}{(p-1)n+2}
+C_1\|u_\varepsilon^\frac{p}{2}\|_{L^\frac{2}{p}}^2\\
&\leq C_2\|\nabla u_\varepsilon^\frac{p}{2}\|_{L^2}^\frac{2n(p-1)}{(p-1)n+2}+C_2
\end{align*}
and, similarly, that
 $$\int_\Omega  v_\varepsilon^p \phi(w_\varepsilon)\leq C_3\|\nabla v_\varepsilon^\frac{p}{2}\|_{L^2}^\frac{2n(p-1)}{(p-1)n+2}+C_3.
 $$
 Setting $y_\varepsilon(t)=\int_\Omega \left(u_\varepsilon^p+v_\varepsilon^p\right)\phi(w_\varepsilon)(\cdot,t)$, we derive from \eqref{w-lim8} an ODI as follows:
 $$
 y_\varepsilon'(t)\leq -C_4\left(y_\varepsilon(t)-1\right)_+^\frac{(p-1)n+2}{n(p-1)}\Longleftrightarrow \left[\left(y_\varepsilon(t)-1\right)_+^\frac{-2}{(p-1)n}\right]'\geq \frac{2C_4}{(p-1)n}, \   \  t>t_0(\delta).
 $$
 An integration enables us to deduce that
 $$
 y_\varepsilon(t)\leq 1+\left(\frac{2C_4}{(p-1)n}(t-t_0(\delta))\right)^{-\frac{(p-1)n}{2}}, \ \ \ t>t_0(\delta).
 $$
 Recalling the definition of $y_\varepsilon$ and $\phi(w_\varepsilon)\geq (2\delta)^{-r}$, we immediately arrive at
 $$
 \int_\Omega \left(u_\varepsilon^p+v_\varepsilon^p\right)\leq (2\delta)^ry_\varepsilon(t)\leq (2\delta)^r\left[1+\left(\frac{2C_4}{(p-1)n}\right)^{-\frac{(p-1)n}{2}}\right], \  t\geq 1+t_0(\delta),
 $$
 yielding our desired estimate \eqref{int-4.65} upon setting $t_1(\delta)=1+t_0(\delta)$.
\end{proof}

With the uniform eventual $L^p$-boundedness of $u_\varepsilon$ and $v_\varepsilon$ in Lemma \ref{lemma-4.13}, it is quite standard via bootstrap argument or semigroup technique (cf. \cite{Hor-Win, TW12-JDE} to obtain the uniform eventual $C^2$-boundedness of weak solutions.

\begin{lemma}\label{lemma-4.14} For $n\in\{3,4,5\}$, there    exist  $T>0$ and $K_{14}=K_{14}(u_0, v_0, w_0)>0$  such that for $\varepsilon\in(\varepsilon_{j_i})_{i\in\mathbb{N}}$ of $(\varepsilon_j)_{j\in\mathbb{N}}$, the global  classical solution of \eqref{reg-2.11}  fulfills
\be\label{int-4.66}
\|u_{\varepsilon}(\cdot,t)\|_{C^2(\overline{\Omega})}+\|v_{\varepsilon}(\cdot,t)\|_{C^2(\overline{\Omega})}\leq K_{14},  \ \ \ \forall t\geq T,
\ee
and such that
\be\label{int-4.67}
u_{\varepsilon}\rightarrow u, \ \ v_{\varepsilon}\rightarrow v \ \ and \ \ w_{\varepsilon}\rightarrow w \ in \ C^{2,1}_{loc}(\overline{\Omega}\times[T,\infty)) \ as \ \varepsilon=\varepsilon_{j_i}\searrow0.
\ee
\end{lemma}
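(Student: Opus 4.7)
The plan is to bootstrap from the eventual $L^p$-estimates of Lemma \ref{lemma-4.13} up to $C^{2,1}$-regularity via the Neumann heat semigroup, and then extract a subsequence converging in $C^{2,1}_{\mathrm{loc}}$ by Arzel\`a--Ascoli, identifying the limit with $(u,v,w)$ from Lemma \ref{lemma-4.8}. Throughout I would work only with $\varepsilon \in (\varepsilon_j)_{j\in\mathbb{N}}$ small enough so that the eventual smallness $w_\varepsilon \leq \delta$ from Lemma \ref{lemma-4.12} is available, and track the starting time carefully so that it is genuinely $\varepsilon$-independent.

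First I would fix $p > n$ (so that $p > 5$ suffices for all three dimensions) and use Lemma \ref{lemma-4.13} to get $\|u_\varepsilon(\cdot,t)\|_{L^p} + \|v_\varepsilon(\cdot,t)\|_{L^p} \leq C$ for $t \geq t_1(p)$, uniformly for all sufficiently small $\varepsilon \in (\varepsilon_j)$. Writing the $w$-equation in \eqref{reg-2.11} via the Duhamel formula
\[
w_\varepsilon(\cdot,t) = e^{(t-s)\Delta} w_\varepsilon(\cdot,s) - \int_s^t e^{(t-\sigma)\Delta}\bigl[(\alpha F_\varepsilon(u_\varepsilon) + \beta F_\varepsilon(v_\varepsilon)) w_\varepsilon\bigr]\,d\sigma,
\]
starting from some $s \geq t_1(p)$ and using $0 \leq F_\varepsilon(s) \leq s$, the $L^\infty$-bound on $w_\varepsilon$ in \eqref{reg-2.17}, together with standard smoothing estimates $\|\nabla e^{\tau\Delta}\varphi\|_{L^\infty} \leq C(1+\tau^{-\frac{1}{2}-\frac{n}{2p}}) \|\varphi\|_{L^p}$, yields an eventual uniform bound $\|\nabla w_\varepsilon(\cdot,t)\|_{L^\infty} \leq C$ for $t \geq t_1(p) + 1$. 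The required exponent estimate is finite precisely because $p > n$.

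Next, with $\|\nabla w_\varepsilon\|_{L^\infty}$ controlled, I would run the Moser/Alikakos-type iteration (or a parallel semigroup argument, applied with sensitivity $F'_\varepsilon \leq 1$) on the $u_\varepsilon$- and $v_\varepsilon$-equations to upgrade the eventual $L^p$-bound to an eventual $L^\infty$-bound, uniformly in $\varepsilon$. Feeding this back into the $w$-equation and using parabolic Schauder theory on the three decoupled equations in turn (first for $w_\varepsilon$, then for $u_\varepsilon$ and $v_\varepsilon$, whose drift coefficient $F'_\varepsilon(u_\varepsilon)\nabla w_\varepsilon$ is bounded in $C^\alpha$ by the preceding step), I obtain the desired bound
\[
\|u_\varepsilon(\cdot,t)\|_{C^{2+\alpha}(\overline{\Omega})} + \|v_\varepsilon(\cdot,t)\|_{C^{2+\alpha}(\overline{\Omega})} + \|w_\varepsilon(\cdot,t)\|_{C^{2+\alpha}(\overline{\Omega})} \leq K
\]
for all $t \geq T$, where $T$ is the cumulative waiting time collected from Lemmas \ref{lemma-4.12}, \ref{lemma-4.13} and the bootstrap. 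In particular \eqref{int-4.66} follows.

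Finally, from the H\"older-in-space-and-time estimates above and Arzel\`a--Ascoli, any sequence $\varepsilon_j \searrow 0$ admits a subsequence $\varepsilon_{j_i}$ along which $(u_{\varepsilon_{j_i}}, v_{\varepsilon_{j_i}}, w_{\varepsilon_{j_i}}) \to (\tilde u, \tilde v, \tilde w)$ in $C^{2,1}_{\mathrm{loc}}(\overline{\Omega}\times[T,\infty))$. The almost-everywhere convergence already established in \eqref{int-4.79} forces $(\tilde u, \tilde v, \tilde w) = (u, v, w)$, so \eqref{int-4.67} holds along this subsequence; in particular $(u,v,w) \in C^{2,1}(\overline{\Omega}\times[T,\infty))$ and is a classical solution there. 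The main obstacle is step one: securing $\|\nabla w_\varepsilon\|_{L^\infty}$ uniformly in $\varepsilon$ for large time with only eventual $L^p$-control on $u_\varepsilon, v_\varepsilon$, which is why the choice $p > n$ (permitted only because the $L^p$-estimate of Lemma \ref{lemma-4.13} holds for \emph{any} $p<\infty$) is essential, together with the eventual smallness of $w_\varepsilon$ to suppress the zeroth-order coupling. Once that is in hand, the rest is a standard parabolic bootstrap.
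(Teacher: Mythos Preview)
Your overall strategy---bootstrap from the eventual $L^p$-bounds of Lemma~\ref{lemma-4.13} through semigroup estimates for $w_\varepsilon$, then upgrade $u_\varepsilon,v_\varepsilon$, then Schauder, then Arzel\`a--Ascoli with identification via \eqref{int-4.79}---is exactly the paper's. The only technical difference is that the paper chooses $p>2n$, lands $\nabla w_\varepsilon$ merely in $L^p$ (not $L^\infty$), and then uses a fractional-power argument on the $u$- and $v$-equations (the embedding $D(A^\theta)\hookrightarrow C^\sigma$) to obtain H\"older regularity of $u_\varepsilon,v_\varepsilon$ in space and time directly, before invoking Schauder on $w_\varepsilon$ and then back on $u_\varepsilon,v_\varepsilon$. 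Your shortcut to $\nabla w_\varepsilon\in L^\infty$ with $p>n$ followed by a Moser-type $L^\infty$ bound on $u_\varepsilon,v_\varepsilon$ is a legitimate variant.

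There is, however, one gap in your bootstrap. Parabolic Schauder theory for the $w_\varepsilon$-equation requires the zero-order coefficient $\alpha F_\varepsilon(u_\varepsilon)+\beta F_\varepsilon(v_\varepsilon)$ to be H\"older continuous, and Schauder for $u_\varepsilon$ afterwards requires $F'_\varepsilon(u_\varepsilon)\nabla w_\varepsilon$---hence $u_\varepsilon$ itself---to be H\"older. Your Moser step, as stated, yields only $u_\varepsilon,v_\varepsilon\in L^\infty$, so neither Schauder application is justified; your claim that ``$F'_\varepsilon(u_\varepsilon)\nabla w_\varepsilon$ is bounded in $C^\alpha$ by the preceding step'' does not follow from what precedes it. The fix is straightforward: once $\nabla w_\varepsilon\in L^\infty$ and $u_\varepsilon,v_\varepsilon\in L^\infty$, the $u_\varepsilon$- and $v_\varepsilon$-equations are divergence-form parabolic with bounded drift, so De~Giorgi--Nash--Moser theory gives $u_\varepsilon,v_\varepsilon\in C^{\alpha,\alpha/2}_{\mathrm{loc}}$ uniformly; alternatively, the paper's fractional-power step accomplishes this. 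Insert this intermediate H\"older step before calling Schauder on $w_\varepsilon$, and the rest of your argument goes through.
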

\begin{proof}
By Lemma \ref{lemma-4.13}, for  $p>2n$,   there    exist  $t_1=t_1(p)>0$ and $\varepsilon_1(p)\in(0,1)$   such that for $\varepsilon\in(\varepsilon_j)_{j\in\mathbb{N}}$ with $\varepsilon<\varepsilon_1(p)$, the  solution of \eqref{reg-2.11}  satisfies
\be\label{int-4.68}
\|u_{\varepsilon}(\cdot,t)\|_{L^p(\Omega)}+\|v_{\varepsilon}(\cdot,t)\|_{L^p(\Omega)}\leq C_1,  \ \ \ \forall t\geq t_1.
\ee
 We use the variation-of-constants formula for $w_{\varepsilon}$ to write
$$
w_{\varepsilon}(\cdot,t)=e^{t( \Delta-1)}w_{\varepsilon}(\cdot,t_1)-\int^t_{t_1}e^{(t-s)( \Delta-1)}(\alpha F_{\varepsilon}(
u_{\varepsilon})+\beta F_{\varepsilon}(v_{\varepsilon})-1)w_{\varepsilon}(\cdot,s)ds, \  t\geq t_1.
$$
Since $|F_{\varepsilon}(s)|\leq |s|$ and $w_{\varepsilon}\leq\|w_0\|_{L^{\infty}(\Omega)}$, we employ the well-known smoothing $L^p$-$L^q$-estimates for the Neumann heat  semigroup  $\{e^{t\Delta}\}_{t\geq0}$  (c.f. \cite{Hor-Win, Win10-JDE}) to infer
\be\label{int-4.69}\begin{split}
\|\nabla w_{\varepsilon}(\cdot,t)\|_{L^p(\Omega)}&\leq C_2\| w_{\varepsilon}(\cdot,t_1)\|_{L^{\infty}(\Omega)} +C_2\int^t_{t_1}\left[1+(t-s)^{-\frac{1}{2}}\right]e^{-(t-s)}\\
& \ \ \ \times\left(\alpha\|u_{\varepsilon}(\cdot,s)\|_{L^p}+\beta\|v_{\varepsilon}(\cdot,s)\|_{L^p}+1\right)ds\\
&\leq C_3, \ \ \  t\geq t_1+1.
\end{split}
\ee
Given \eqref{int-4.68} and \eqref{int-4.69}, H\"{o}lder's inequality enables us to infer that
\be\label{uv-gradw-est}
\begin{split}
&\|u_{\varepsilon}(\cdot,t)\nabla w_{\varepsilon}(\cdot,t)\|_{L^{\frac{p}{2}}}+\|v_{\varepsilon}(\cdot,t)\nabla w_{\varepsilon}(\cdot,t)\|_{L^{\frac{p}{2}}}\\[0.2cm]
&\leq \left(\|u_{\varepsilon}(\cdot,t)\|_{L^p}+\|v_{\varepsilon}(\cdot,t)\|_{L^p}\right)\|\nabla w_{\varepsilon}(\cdot,t)\|_{L^p}\leq C_4, \ \ \  t\geq t_2:=t_1+1.
\end{split}
\ee
Applying the variation-of-constants formula for $u_{\varepsilon}$, we have
$$
u_{\varepsilon}(\cdot,t)=e^{t\Delta}u_{\varepsilon}(\cdot,t_2)-\chi_1\int^t_{t_2}e^{(t-\tau)\Delta}\nabla\cdot(u_{\varepsilon}
F'_{\varepsilon}(u_{\varepsilon})\nabla w_{\varepsilon})(\cdot,\tau)d\tau, \  t\geq t_2.
$$
This together with \eqref{int-4.68} and \eqref{uv-gradw-est} allows us to  find  $\theta\in(0,1)$, $\gamma\in(0,1)$ and $q>1$ suitably  large such that $2\theta-\frac{n}{q}>0$,
\be\label{int-4.70}
\|A^\theta u_{\varepsilon}(\cdot,t)\|_{L^q}\leq C_5,  \ \ \ \forall t>t_3:=t_2+1
\ee
and, for any   $t,s\geq t_3$ such that $|t-s|\leq1$,
\be\label{int-4.71}
\|A^\theta u_{\varepsilon}(\cdot,t)-A^\theta u_{\varepsilon}(\cdot,s)\|_{L^q}\leq C_6|t-s|^\gamma,
\ee
where $A^\theta$ denotes the fractional power of the realization of $-\Delta+1$ in $L^q(\Omega)$ under homogeneous Neumann boundary conditions. By the continuous embedding  $D(A^\theta)\hookrightarrow C^\sigma$ for all $\sigma\in(0,2 \theta-\frac{n}{q})$ (cf.  \cite{Henry, Hor-Win, Xiangjde}),  \eqref{int-4.70} and \eqref{int-4.71}, we know that $(u_{\varepsilon})_{\varepsilon\in (\varepsilon_j)_{j\in\mathbb{N}}}$ is bounded in both $L^{\infty}(\Omega\times(t_3,\infty))$ and in $C^{\sigma,\frac{\sigma}{2} }_{loc}(\overline{\Omega}\times[t_3,\infty))$ for some $\sigma\in(0,1)$. Analogously, we also have $(v_{\varepsilon})_{\varepsilon\in (\varepsilon_j)_{j\in\mathbb{N}}}$ is bounded in both $L^{\infty}(\Omega\times(t_3,\infty))$ and in $C^{\sigma,\frac{\sigma}{2} }_{loc}(\overline{\Omega}\times[t_3,\infty))$ for some $\sigma\in(0,1)$. Thus,   the standard parabolic Schauder estimates
\cite{LSU-bk} applied to the third equation in \eqref{reg-2.11} yield boundedness of $(w_{\varepsilon})_{\varepsilon\in(\varepsilon_j)_{j \in\mathbb{N}}}$ in both $L^{\infty}((t_4,\infty);C^{2+\sigma}(\overline{\Omega}))$ and in $C^{2+\sigma,1+\frac{\sigma}{2} }_{loc}(\overline{\Omega}\times[t_4,\infty))$ with $t_4=t_3+1$. This, in turn, by a similar argument, we also obtain the boundedness of $(u_{\varepsilon})_{\varepsilon \in (\varepsilon_j)_{j\in\mathbb{N}}}$ and $(v_{\varepsilon})_{\varepsilon\in (\varepsilon_j)_{j\in\mathbb{N}}}$ in both $L^{\infty}((t_5,\infty);C^{2+\sigma'}(\overline{\Omega}))$ and in $C^{2+\sigma',1+\frac{\sigma'}{2} }_{loc}(\overline{\Omega}\times[t_5,\infty))$ for some $\sigma'\in(0,1)$ and $t_5=t_4+1$, and thus \eqref{int-4.66} follows. Finally, an application of the Arzel\`{a}-Ascoli theorem implies \eqref{int-4.67}.
\end{proof}

\begin{lemma}\label{lemma-4.15} For $n\in\{3, 4, 5\}$, the global weak solution of \eqref{PPT} constructed from Lemma \ref{lemma-4.8} satisfies
\be\label{int-4.75}
u(\cdot,t)\rightarrow\bar{u}_0 \ \ and \ \ v(\cdot,t)\rightarrow\bar{v}_0 \ \ in \ L^{\infty}(\Omega) \ as \ t\rightarrow\infty,
\ee
where $\bar{u}_0$ and $\bar{v}_0$ are the average of $u_0$ and $v_0$ over $\Omega$, respectively.
\end{lemma}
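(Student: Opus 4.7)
The plan is to combine the $L^\infty$-decay of $w$ in Lemma \ref{lemma-4.11} with the eventual $C^2$-smoothness of $(u,v,w)$ from Lemma \ref{lemma-4.14} and the mass conservations $\int_\Omega u(\cdot,t)=|\Omega|\bar u_0$ and $\int_\Omega v(\cdot,t)=|\Omega|\bar v_0$ (inherited from Lemma \ref{lemma-2.4} through the $L^1$-convergence in \eqref{int-4.89}) in order to derive a Lyapunov-type ODI for
$$
E(t):=\int_\Omega(u-\bar u_0)^2+\int_\Omega(v-\bar v_0)^2,\qquad t\geq T,
$$
and then to drive $E(t)$ to zero.

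First, I would promote the scalar decay to gradient decay of $w$. The proof of Lemma \ref{lemma-4.14} actually furnishes a uniform bound $\|w(\cdot,t)\|_{C^{2+\sigma}(\overline\Omega)}\leq C_1$ for all $t\geq T$; coupling this with $\|w(\cdot,t)\|_{L^\infty}\to 0$ from Lemma \ref{lemma-4.11} and the standard interpolation $\|\nabla w\|_{L^\infty}\leq C\|w\|_{L^\infty}^{1/2}\|w\|_{C^2}^{1/2}$ yields $\|\nabla w(\cdot,t)\|_{L^\infty(\Omega)}\to 0$, and hence also $\|\nabla w(\cdot,t)\|_{L^2(\Omega)}\to 0$.

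Since $(u,v,w)$ is classical on $\overline{\Omega}\times[T,\infty)$, I would next test the $u$- and $v$-equations with $u-\bar u_0$ and $v-\bar v_0$ respectively. Young's inequality together with the $L^\infty$-bounds $\|u(\cdot,t)\|_{L^\infty}+\|v(\cdot,t)\|_{L^\infty}\leq C_2$ afforded by Lemma \ref{lemma-4.14} produces
$$
\frac{d}{dt}E(t)+\int_\Omega|\nabla u|^2+\int_\Omega|\nabla v|^2\leq C_3\int_\Omega|\nabla w|^2,\qquad t\geq T.
$$
Because $u(\cdot,t)-\bar u_0$ and $v(\cdot,t)-\bar v_0$ have zero spatial mean, Poincar\'e's inequality gives $\int_\Omega|\nabla u|^2\geq C_4\int_\Omega(u-\bar u_0)^2$ and the analog for $v$, so
$$
E'(t)+C_4E(t)\leq g(t):=C_3\|\nabla w(\cdot,t)\|_{L^2(\Omega)}^2,\qquad g(t)\to 0.
$$
Integrating via the integrating factor and splitting $\int_T^t e^{-C_4(t-s)}g(s)\,ds$ over $[T,T_\eta]\cup[T_\eta,t]$ for any $\eta>0$ (with $T_\eta$ chosen so that $g(s)<\eta$ for $s\geq T_\eta$) yields $\limsup_{t\to\infty}E(t)\leq \eta/C_4$, and so $E(t)\to 0$.

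Finally, to upgrade the $L^2$-decay of $u-\bar u_0$ and $v-\bar v_0$ to the asserted $L^\infty$-decay \eqref{int-4.75}, I would invoke the uniform $C^2$-bound on $u$ and $v$ from Lemma \ref{lemma-4.14} together with a Gagliardo-Nirenberg interpolation of the form
$$
\|\varphi\|_{L^\infty(\Omega)}\leq C_5\|\varphi\|_{L^2(\Omega)}^{\theta}\|\varphi\|_{W^{2,2}(\Omega)}^{1-\theta}+C_5\|\varphi\|_{L^2(\Omega)},
$$
valid for some $\theta=\theta(n)\in(0,1)$ since $n\leq 5$, applied to $\varphi=u(\cdot,t)-\bar u_0$ and $\varphi=v(\cdot,t)-\bar v_0$. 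The main obstacle I anticipate is the first step, namely pinning down sufficient decay of $\nabla w$ to close the Lyapunov ODI; once the eventual $C^{2+\sigma}$-regularity of $w$ from the parabolic bootstrap in the proof of Lemma \ref{lemma-4.14} is exploited, however, this reduces to a one-line interpolation and the rest of the argument consists of elementary ODI and Sobolev-interpolation steps.
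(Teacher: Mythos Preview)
Your argument is correct in substance and takes a genuinely different route from the paper.  The paper argues by contradiction: assuming $\|u(\cdot,t_k)-\bar u_0\|_{L^\infty}\geq d>0$ along some sequence, it uses the $C^2$-compactness of Lemma~\ref{lemma-4.14} to extract a limit $u_\infty$, and then identifies $u_\infty\equiv\bar u_0$ by matching the two time-integrated decay statements $\int_{t_k}^{t_k+1}\|u-\bar u_0\|^2_{(W^{m,2})^\ast}\to0$ (from Lemma~\ref{lemma-4.4}) and $\int_{t_k}^{t_k+1}\|u-u_\infty\|^2_{(W^{m,2})^\ast}\to0$ (from the $u_t$-bound of Lemma~\ref{lemma-4.6}).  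Your Lyapunov-ODI approach is more direct and avoids the dual-space machinery of Lemmas~\ref{lemma-4.4} and~\ref{lemma-4.6} entirely, trading it for the single extra ingredient $\|\nabla w(\cdot,t)\|_{L^\infty}\to0$, which you correctly extract by interpolating the $L^\infty$-decay of $w$ against the eventual $C^{2+\sigma}$-bound obtained in the bootstrap of Lemma~\ref{lemma-4.14}.  Your route is arguably more elementary and yields, as a by-product, an explicit decay mechanism (the ODI $E'+C_4E\leq g$ with $g\to0$) rather than a pure compactness statement.

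One small slip: the interpolation you write in the last step,
\[
\|\varphi\|_{L^\infty(\Omega)}\leq C_5\|\varphi\|_{L^2(\Omega)}^{\theta}\|\varphi\|_{W^{2,2}(\Omega)}^{1-\theta}+C_5\|\varphi\|_{L^2(\Omega)},
\]
fails for $n=5$ (and is borderline for $n=4$), since $W^{2,2}(\Omega)$ does not embed into $L^\infty(\Omega)$ when $n\geq4$.  Simply replace $W^{2,2}$ by $W^{2,\infty}$ (or even $W^{1,\infty}$): the uniform $C^2$-bound from Lemma~\ref{lemma-4.14} controls this stronger norm, and then Gagliardo--Nirenberg gives $\|\varphi\|_{L^\infty}\leq C\|\varphi\|_{L^2}^{\theta}\|\varphi\|_{W^{2,\infty}}^{1-\theta}$ with $\theta=\tfrac{4}{n+4}\in(0,1)$ for all $n\leq5$, and your conclusion follows.
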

\begin{proof}
With the information provided by Lemmas \ref{lemma-4.4},  \ref{lemma-4.6}  and \ref{lemma-4.14}, we can readily extend and adapt the arguments in  \cite[Lemma 7.2]{TW12-JDE} to derive \eqref{int-4.75}. Indeed, let us assume to the contrary there exists a sequence of $t_k\rightarrow \infty$ such that
\be\label{ulimt-1}
d:=\inf_{k\in\mathbb{N}}\left\|u(\cdot,t_k)-\bar{u}_0\right\|_{L^\infty}>0,
\ee
where we with no loss of generality can assume that all $t_k>T$ and $1+t_k\leq t_{k+1}$ as $T$ provided by Lemma \ref{lemma-4.14}.  In light of \eqref{int-4.66} and \eqref{int-4.67}, $(u(\cdot, t_k))_{k\in\mathbb{N}}$ is relatively compact, and then by the Arzel\`{a}-Ascoli theorem, we may assume for convenience, for some nonnegative function $u_\infty$, that
\be\label{ulimt-2}
u(\cdot, t_k)\rightarrow u_\infty \ \ in \ L^\infty(\Omega) \ as \ k\rightarrow\infty.
\ee
In the sprit of  Lemma \ref{lemma-4.11}, we use the Cauchy-Schwarz inequality  to estimate that
\begin{align*}
&\int_{t_k}^{1+t_k} \left\|u_\varepsilon(\cdot, t)-u_\varepsilon(\cdot, t_k)\right\|_{(W^{m,2})^*}^2\\
&=\int_{t_k}^{1+t_k}\left\|\int_{t_k}^tu_{\varepsilon t}(\cdot, s)\right\|_{(W^{m,2})^*}^2\leq  \int_{t_k}^{1+t_k}\left\|u_{\varepsilon t}(\cdot, s)\right\|_{(W^{m,2})^*}^2, \quad  \forall \varepsilon\in (0, 1).
\end{align*}
In view of  \eqref{int-4.72} in Lemma \ref{lemma-4.6}, upon passing to the limit $\varepsilon=\varepsilon_j\searrow0$, this yields
$$
\int_{t_k}^{1+t_k} \left\|u(\cdot, t)-u(\cdot, t_k)\right\|_{(W^{m,2})^*}^2\leq  \int_{t_k}^{1+t_k}\left\|u_t(\cdot, s)\right\|_{}^2\rightarrow 0 \text{  as }  k\rightarrow \infty.
$$
Since $L^\infty(\Omega)\hookrightarrow \left(W^{m,2}(\Omega)\right)^*$ due to $m>\frac{n}{2}{+1}$,  it then follows from \eqref{ulimt-2} that
$$
\int_{t_k}^{1+t_k} \left\| u(\cdot, t_k)-u_\infty\right\|_{(W^{m,2})^*}^2\rightarrow 0 \text{  as }  k\rightarrow \infty.
$$
Hence, we use  triangle inequality to deduce from the above two estimates that
\be\label{ulimt-3}
\int_{t_k}^{1+t_k} \left\| u(\cdot, t)-u_\infty\right\|_{(W^{m,2})^*}^2\rightarrow 0 \text{  as }  k\rightarrow \infty.
\ee
On the other hand, notice also that $L^\frac{n}{n-1}(\Omega)\hookrightarrow \left(W^{m,2}(\Omega)\right)^*$ due to $m>\frac{n}{2}{+1}$;     The content of Lemma \ref{lemma-4.4} from Lemma \ref{lemma-4.10} ensures that
\be\label{ulimt-4}
\int_{t_k}^{1+t_k}\|u(\cdot, t)-\bar{u}_0\|^2_{(W^{m,2})^*}
\rightarrow 0 \text{  as }  k\rightarrow \infty.
\ee
By uniqueness, it follows from \eqref{ulimt-3} and \eqref{ulimt-4} that $u_\infty\equiv \bar{u}_0$, which  is impossible by  \eqref{ulimt-1} and \eqref{ulimt-2}. This contradiction says that  the $u$-limit in \eqref{int-4.75} is true; similarly, the $v$-limits follows in the same way.
\end{proof}
\begin{proof}[\bf{Proof of eventual smoothness and convergence in convex domains}]\ \
The eventual smoothness and boundedness of weak solutions result immediately
from \eqref{int-4.66}  and  \eqref{int-4.67} in Lemma \ref{lemma-4.14}. The convergence of weak solutions as in \eqref{uvw-cov-zt} follows from Lemmas \ref{lemma-4.14} and \ref{lemma-4.15}.
\end{proof}

\textbf{Acknowledgments}   G. Ren was supported by the National Natural Science Foundation of China (No.12001214) and the Postdoctoral Science Foundation (Nos. 2020M672319, 2020TQ0111). T. Xiang was funded by the National Natural Science Foundation of China (Nos. 12071476  and 11871226).

\end{document}